\newtheorem{theorem}{Theorem}[section]
\newtheorem{example}[theorem]{Example}
\newtheorem{lemma}[theorem]{Lemma}
\newtheorem{corollary}[theorem]{Corollary}
\newcommand{\mc}[1]{\mathcal{#1}}
\newcommand{\mf}[1]{\mathfrak{#1}}
\newcommand{\prs}[1]{\left(#1\right)}
\newcommand{\prsm}[1]{\bigg(#1\bigg)}
\newcommand{\prss}[1]{(#1)}
\newcommand{\sbrks}[1]{\left[#1\right]}
\newcommand{\abs}[1]{\left|#1\right|}
\newcommand{\abss}[1]{|#1|}
\newcommand{\absm}[1]{\bigg|#1\bigg|}
\newcommand{\angs}[1]{\left\langle#1\right\rangle}
\newcommand{\angss}[1]{\langle#1\rangle}
\newcommand{\cbrks}[1]{\left\{#1\right\}}
\newcommand{\cbrksm}[1]{\bigg\{#1\bigg\}}
\DeclarePairedDelimiter{\floor}{\lfloor}{\floor}
\newcommand{\norm}[1]{\left\lVert#1\right\rVert}
\newcommand{\norms}[1]{\lVert#1\rVert}
\newcommand{\rnm}{\mathbb{R}}
\newcommand{\znm}{\mathbb{Z}}
\newcommand{\eun}[1]{\rnm^{#1}}
\newcommand{\se}{\subseteq}
\newcommand{\cl}{\colon}
\DeclareMathOperator{\sppt}{\text{sppt}}
\DeclareMathOperator{\diam}{\text{diam}}
\newcommand{\nlspace}{\\[0.3cm]}
\newcommand{\os}[1]{^{#1}}
\newcommand{\dx}[1]{\,d#1}
\newcommand{\sm}{\setminus}
\newcommand{\nf}{\infty}
\newcommand{\ol}[1]{\overline{#1}}
\DeclareMathOperator{\sign}{\text{sign}}
\newcommand{\ra}{\rightarrow}
\newcommand{\tm}{\times}
\newcommand{\fv}{^{-1}}
\newcommand{\sph}[1]{\mathbb{S}^{#1}}
\newcommand{\pd}{\partial}
\newcommand{\wh}[1]{\widehat{#1}}
\def\Xint#1{\mathchoice
	{\XXint\displaystyle\textstyle{#1}}%
	{\XXint\textstyle\scriptstyle{#1}}%
	{\XXint\scriptstyle\scriptscriptstyle{#1}}%
	{\XXint\scriptscriptstyle\scriptscriptstyle{#1}}%
	\!\int}
\def\XXint#1#2#3{{\setbox0=\hbox{$#1{#2#3}{\int}$}
		\vcenter{\hbox{$#2#3$}}\kern-.5\wd0}}
\def\dashint{\Xint-}
\newcommand{\tlac}{T_{\text{\normalfont lac}}}
\newcommand{\tful}{T_{\text{\normalfont full}}}
\newcommand{\blac}{B_{\text{\normalfont lac}}}
\newcommand{\bful}{B_{\text{\normalfont full}}}
\newcommand{\ef}[1]{e\os{#1}}
\newcommand{\fconst}{2\pi i}
\newcommand{\llac}{L_{\text{\normalfont lac}}}
\newcommand{\lful}{L_{\text{\normalfont full}}}
\DeclareMathOperator{\conv}{conv}
\author{Eyvindur Ari Palsson and Sean R.\ Sovine}
\address{
}
\email{}
\address[Eyvindur Ari Palsson and Sean R.\ Sovine]{
	Department of Mathematics\\
	Virginia Tech\\
	Blacksburg, Virginia 24061
	}
\email{\{palsson,sovine5\}@vt.edu}
\keywords{multilinear maximal geometric averages, bilinear weights, sparse forms}
\subjclass[2010]{Primary 42B25, Secondary 46E35}
\date{\today}
\thanks{The authors would like to thank Yumeng Ou for helpful discussions about the present work as well as the anonymous referee for the many helpful suggestions. The work of the first listed author was supported in part by Simons Foundation Grant \#360560.}
\begin{document}
	
\title[Sparse Bounds for Maximal Triangle and Bilinear Spherical Averaging Operators]{Sparse Bounds for Maximal Triangle and\\ Bilinear Spherical Averaging Operators}

\begin{abstract}
	We show that the method in the recent work \cite{RSS} by Roncal, Shrivastava, and Shuin can be adapted to show that certain $L^p$-improving bounds in the interior of the boundedness region for the bilinear spherical or triangle averaging operator imply sparse bounds for the corresponding lacunary maximal operator, and that $L^p$-improving bounds in the interior of the boundedness region for the single-scale maximal bilinear spherical averaging operator implies sparse bounds for the corresponding full maximal operator.
	More generally we show that the proof in \cite{RSS} applies for bilinear convolutions with compactly supported finite Borel measures that satisfy appropriate $L^p$-improving and continuity estimates. This shows that the method used by Roncal, Shrivastava, and Shuin in \cite{RSS} can be adapted obtain sparse bounds for a general class of bilinear operators that are not of product type, for a certain range of $L^p$ exponents.
\end{abstract}
\maketitle

\renewcommand{\contentsname}{\normalsize Contents}

\vspace*{-6mm}
\tableofcontents

\vspace*{-8mm}
\section{Introduction}

In this work we consider maximal operators corresponding to a bilinear convolution with a compactly supported finite Borel measure $\mu$. Specifically, we define the scale-$t$ bilinear convolution with $\mu$ for $t > 0$ by
\[
L_t(f,g)(x) := [(f\otimes g)\star \mu_t](x,x) = \int_{\eun{2d}}f(x - ty)g(x - tz)\dx{\mu(y,z)}, 
\]
and the corresponding lacunary maximal operator
\[
\llac(f,g)(x) := \sup_{j \in \znm}\abss{L_{2^j}(f,g)(x)},
\]
and single-scale and full maximal operators
\[
L_{\star, t}(f,g)(x) := \sup_{s \in [t, 2t]}\abss{L_{s}(f,g)(x)}, \quad \text{and} \quad \lful(f,g)(x) := \sup_{t > 0}\abss{L_t(f,g)(x)}. 
\]
Operators of this type have been actively studied in recent work in harmonic analysis, in particular the bilinear spherical averaging operator \cite{GebaEA, BarrionuevoEA, JeongLee} and the triangle averaging operator \cite{PS, IPS}, and have applications to Falconer-type theorems in continuous geometric combinatorics \cite{GI, GGIP, IL}. Bounds for these operators can be viewed as bilinear extensions of similar results for linear convolutions, such as the spherical averaging operator, which has been extensively studied by harmonic analysts \cite{Strichartz, Littman, Stein, Bourgain}. 

For locally integrable functions $f,g,h$ and a sparse collection $\mc{S}$ of cubes we define the sparse form
\[
\Lambda_{\mc{S}, p, q, r'}(f,g,h) = \sum_{Q \in \mc{S}}|Q|\angs{f}_{Q, p}\angs{g}_{Q, q}\angs{h}_{Q, r'},
\]
where
\[
\angs{\varphi}_{Q, t} := \prs{ \frac{1}{|Q|}\int_Q |\varphi(x)|^t\dx{x} }\os{\frac{1}{t}} = \frac{1}{|Q|\os{1/t}}\norm{\varphi 1_Q}_{L^t}.
\]
A family $\mc{S}$ of cubes in $\eun{d}$ is called $\gamma$-\textit{sparse} if there is some fixed $\gamma \in (0,1)$ such that for each $Q \in \mc{S}$ there exists $F_Q \se Q$ with $|F_Q| \geq \gamma|Q|$ and $F_Q\cap F_{Q'} = \emptyset$ for distinct $Q, Q' \in \mc{S}$. \textit{Sparse domination} is an approach to bounding operators in dual form by obtaining a \textit{sparse bound}, which for a bilinear operator $L$ looks like 
\[
\angss{L(f,g), h} \lesssim \sup_{\mc{S}}\Lambda_{\mc{S}, p, q, r'}(f,g,h),
\]
with the supremum taken over all $\gamma$-sparse collections $\mc{S}$ for some fixed $\gamma\in(0,1)$ and the implicit constant depending only on the operator $L$. As usual, $\angs{\phi,\psi}$ is just the integral of the product $\phi\psi$ on $\eun{d}$ with respect to Lebesgue measure. 

Sparse bounds are refined estimates that yield a range of weighted bounds and to which extrapolation techniques can be applied, as described in Section~\ref{secWeight}. 
Sparse domination techniques arose out of the body of work investigating quantitative weighted bounds which lead up to the proofs of the $A_2$ conjecture. An exposition of this development is contained in \cite{Pereyra}. In \cite{LaceySpherical} Michael Lacey built on an argument of Conde, Culiuc, Di Plinio, and Ou \cite{CondeEA} to establish sparse bounds for the lacunary and full spherical maximal operators, using the key observation that the spherical average and its unit-scale maximal counterpart satisfy $L^p$-continuity estimates.  

Recently, Roncal, Shrivastava, and Shuin \cite{RSS} built on the work of Lacey \cite{LaceySpherical} to obtain sparse bounds for the product-type operators
\[
\mc{M}_{\text{lac}}(f,g)(x):= \sup_{j \in \znm}\abss{\mc{A}_{2^j}(f)(x)\mc{A}_{2^j}(g)(x)} \quad \text{and} \quad \mc{M}_{\text{full}}(f,g)(x):= \sup_{t > 0}\abss{\mc{A}_{t}(f)(x)\mc{A}_{t}(g)(x)},
\]
where $\mc{A}_{t}(f)(x)=\int_{\sph{d-1}} f(x-ty)d\sigma(y)$ is the classical spherical averaging operator, which correspond to $L_t$ in the case where $\mu$ is the natural measure on the product $\sph{d-1}\tm \sph{d-1}$. 
In this work we show that the methods in \cite{RSS} can be adapted to obtain sparse bounds for maximal operators $\llac$ and $\lful$ from $L^p$-improving and continuity estimates for $L_t$ and $L_{\star, t}$, respectively, provided that these latter estimates are available for a given measure $\mu$. 

We also address two particular instances of the operator $L_t$, the bilinear spherical averaging operator and the triangle averaging operator, for which the required $L^p$-improving and continuity estimates are available. 
The bilinear spherical averaging operator of radius $t > 0$ is defined by
\[
B_t(f,g)(x) := \int_{\sph{2d-1}}f(x - ty)g(x - tz)\dx{\sigma(y,z)},
\]
corresponds to $\mu$ being the natural surface measure on $\sph{2d-1}$. The triangle averaging operator is defined by
\[
T_t(f,g)(x) := \int_{  \mf{D} }f(x - ty)g(x - tz)\dx{\mu(y,z)},
\]
where $\mf{D} = \{ (y,z) \in \eun{2d}\, \cl |y| = |z| = | y - z | = 1 \}$ and $\mu$ is the natural surface measure on $\mf{D}$ as an embedded submanifold of $\eun{2d}$.  We discuss the required $L^p$-improving and continuity estimates for these operators in Sections \ref{secKnownBounds} and \ref{secContEst} below. 

Our main result for the triangle and bilinear spherical averaging operators is:
\begin{theorem}\label{thm0}
	Let $(1/p, 1/q, 1/r)$ with $r \geq p,q$ and $r  > 1$ be in the interior of the boundedness set $\mf{S}^d$, $d\geq 2$ of $B_{t}$. Then
	\begin{equation*}
	\abs{\angss{\blac(f,g), h}} \lesssim \sup_{\mc{S}}\Lambda_{\mc{S}, p, q, r'}(f, g, h)
	\end{equation*}
	for any compactly supported bounded functions $f$, $g$ and $h$,
	where the supremum is over all sparse collections.
	
	Let $(1/p, 1/q, 1/r)$ with $r \geq p,q$ and $r  > 1$ be in the interior of the boundedness set $\mf{S}_{\star}^d$, $d\geq 4$, of $B_{\star,t}$. Then
	\begin{equation*}
	\abs{\angss{\bful(f,g), h}} \lesssim \sup_{\mc{S}}\Lambda_{\mc{S}, p, q, r'}(f, g, h)
	\end{equation*}
	for any compactly supported bounded functions $f$, $g$ and $h$,
	where the supremum is over all sparse collections.
	
	Let $(1/p, 1/q, 1/r)$ with $r \geq p,q$ and $r  > 1$ be in the interior of the boundedness set $\mf{T}^d$, $d\geq 13$ of $T_t$. Then
	\begin{equation*}
	\abs{\angss{\tlac(f,g), h}} \lesssim \sup_{\mc{S}}\Lambda_{\mc{S}, p, q, r'}(f, g, h)
	\end{equation*}
	for any compactly supported bounded functions $f$, $g$ and $h$,
	where the supremum is over all sparse collections.
\end{theorem}
\noindent In Section \ref{secKnownBounds} we describe subsets of each of sets $\mf{S}^d$, $\mf{S}_{\star}^d$, and $\mf{T}^d$ that contain indices $(p,q,r)$ for which the above theorem can be applied. The dimensional restrictions in the above theorem all come from obtaining certain continuity bounds, while the mapping properties described in Section \ref{secKnownBounds} in many cases reach dimensions below our stated thresholds.

The lacunary results in Theorem \ref{thm0} follow by application of the following abstract sparse domination theorem for operators of the type $L_t$, which is proved in Section \ref{secAbsDom}.
\begin{theorem}\label{thm01}
	Let $L_t$ be a bilinear convolution with a compactly supported finite Borel measure $\mu$ satisfying an $L^p$-improving estimate 
	\[
	L_t \cl L^p \tm L^q \ra L^r \quad \text{for some $(p,q,r)$ with } \quad \frac{1}{p} + \frac{1}{q} > \frac{1}{r}, 
	\]
	and let 
	$\mf{B}$ be the \textit{boundedness set} of all triples $(1/p, 1/q, 1/r)$ such that 
	\[
	L_t \cl L^p(\eun{d})\tm L^q(\eun{d}) \ra L^r(\eun{d}). 
	\]
	Let $\tau_y h(x) := h(x-y)$ denote translation. Suppose that $L_t$ satisfies an $L^p$-continuity estimate
	\[
	\norm{L_t((I - \tau_{y_1})f, (I - \tau_{y_2})g)}_{L^r} \lesssim t\os{d\prs{\frac{1}{r} - \frac{1}{p} - \frac{1}{q}}}\prs{\frac{\abs{y_1}}{t}}\os{\eta_1}\prs{\frac{\abs{y_2}}{t}}\os{\eta_2}\norm{f}_{L^p}\norm{g}_{L^q}
	\]
	for some $\eta_1, \eta_2 > 0$
	for each $(1/p, 1/q, 1/r)$ in the interior of $\mf{B}$. 
	Then for each $(1/p, 1/q, 1/r)$ with $r \geq p,q$ and $r > 1$ in the interior of $\mf{B}$ we have the sparse form bound
	\[
	\abss{\angss{\llac(f,g), h}} \lesssim \sup_{\mc{S}}\Lambda_{\mc{S}, p, q, r'}(f, g, h)
	\]
	for all $f,g,h$ bounded and compactly supported, where the supremum is taken over all sparse families.
\end{theorem}
Notably absent in Theorem \ref{thm0} are sparse bounds for $\tful$ due to lack of continuity estimates. In Section \ref{secFullMax} we indicate how to obtain sparse bounds for $\tful$, assuming continuity estimates, and outline how sparse bounds $\lful$ can be similarly obtained provided the appropriate continuity estimates are available. The corresponding theorem is:
\begin{theorem}\label{thm02}
	Let $L_t$ be a bilinear convolution with a compactly supported finite Borel measure $\mu$, and let 
	$\mf{B_{\star}}$ be the \textit{boundedness set} of all triples $(1/p, 1/q, 1/r)$ such that
	\[
	L_{\star,t} \cl L^p(\eun{d})\tm L^q(\eun{d}) \ra L^r(\eun{d}),
	\]
	where $L_{\star, t}$ is the corresponding unit-scale maximal operator. 
	Suppose that $\mf{B}_\star$ has nonempty interior and that $L_{\star, t}$ satisfies an $L^p$-continuity estimate
	\[
	\norm{L_{\star, t}((I - \tau_{y_1})f, (I - \tau_{y_2})g)}_{L^r} \lesssim t\os{d\prs{\frac{1}{r} - \frac{1}{p} - \frac{1}{q}}}\prs{\frac{\abs{y_1}}{t}}\os{\eta_1}\prs{\frac{\abs{y_2}}{t}}\os{\eta_2}\norm{f}_{L^p}\norm{g}_{L^q}
	\]
	for some $\eta_1, \eta_2 > 0$
	for each $(1/p, 1/q, 1/r)$ in the interior of $\mf{B}_\star$.
	Then for each $(1/p, 1/q, 1/r)$ with $r \geq p,q$ and $r > 1$ in the interior of $\mf{B}_\star$ we have the sparse form bound
	\[
	\abss{\angss{\lful(f,g), h}} \lesssim \sup_{\mc{S}}\Lambda_{\mc{S}, p, q, r'}(f, g, h)
	\]
	for all $f,g,h$ bounded and compactly supported, where the supremum is taken over all sparse families.
\end{theorem}

In Section \ref{secWeight} it is shown, as in \cite{RSS}, that extrapolation results can be applied to obtain a range of weighted bounds for $\llac$ and $\lful$ from the bounds in Theorems \ref{thm01} and \ref{thm02}, including bounds in the quasi-Banach range. In particular this extrapolation applies to the sparse bounds for maximal versions of the operators $T_t$ and $B_t$. 

While working on the current version of this paper the authors became aware of recent work by Borges, Foster, Ou, Pipher, and Zhou \cite{BFOPZ} which achieves a significantly expanded range of sparse bounds for the bilinear spherical maximal function. By working more directly with the operator they are able to obtain bounds in a wider range of dimensions, going all the way down to $d\geq 2$ for $\bful$ and to $d=1$ for $\blac$. Further, in an exciting development they are able to eliminate the condition $p,q \leq r$ from the range indices $(p,q,r)$ for which sparse bounds are established, which is in stark contrast to our general bounds as well as the bounds obtained in \cite{RSS}. Even more recently, after doing revisions on the paper Borges, Foster, and Ou \cite{BFO25} were able to obtain sparse domination for $\tlac$ when $d\geq 5$, which is a significant improvement on the dimensional threshold.

\section{Specific Bounds for $T_t$, $B_t$, and their Maximal Variants}\label{secKnownBounds}

Due to the restriction $r \geq p ,q$ in Theorems \ref{thm01} and \ref{thm02}, we must have
\[
\frac{1}{p} + \frac{1}{q} \geq \frac{2}{r} \quad \Rightarrow \quad \frac{r}{ \frac{pq}{p + q} } \geq 2,
\]
so we need an $L^p$-improving factor of at least 2 over the H\"older exponent $\frac{pq}{p+q}$ for a given pair of exponents $p, q > 1$. 
Fortunately such estimates are available for the triangle and bilinear spherical averaging operators and their single-scale maximal counterparts. For the triangle averaging operator, Iosevich, Palsson, and Sovine \cite{IPS} established the following result.
\begin{theorem}[\cite{IPS}, Thm.\ 1.2]
	In dimensions $d \geq 2$ the triangle averaging operator $T_t$ is of restricted strong-type $(\frac{2(d+1)}{d}, \frac{2(d+1)}{d}, d+1)$. 
\end{theorem}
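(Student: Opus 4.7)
The strategy is to exploit the fibered structure of the triangle manifold $\mf{T}$ and reduce the estimate to sharp $L^p$-improving bounds for spherical convolutions in lower dimensions. By scaling in $t$ it suffices to take $t=1$. The projection $(y,z)\mapsto y$ exhibits $\mf{T}$ as a fiber bundle over $\sph{d-1}$ whose fiber $\Sigma_y = \{z \in \sph{d-1} : |z-y|=1\}$ is a $(d-2)$-sphere of radius $\sqrt{3}/2$ sitting in the affine hyperplane $H_y := \{w : w\cdot y = 1/2\}$. Upon disintegration, $\mu$ decomposes (up to a smooth weight bounded between two positive constants) as $d\nu_y(z)\,d\sigma(y)$, giving
\[
T_1(f,g)(x) = \int_{\sph{d-1}} f(x-y)\,\mc{A}_y g(x)\,d\sigma(y), \qquad \mc{A}_y g(x) := \int g(x-z)\,d\nu_y(z).
\]

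For the restricted strong-type claim it suffices to dominate the trilinear form $\Lambda(E,F,G) := \angs{T_1(1_E,1_F),1_G}$ by $|E|^{d/(2(d+1))}|F|^{d/(2(d+1))}|G|^{d/(d+1)}$. A useful observation is that $\Lambda$ is, up to bounded factors, symmetric in the triple $(E,F,G)$, since these parametrize the three vertices of a unit equilateral triangle in $\eun{d}$; this symmetry lets us freely permute the roles of the three sets. The plan is to control the inner operator $\mc{A}_y$ via the sharp Littman--Strichartz $L^{d/(d-1),1}\ra L^d$ estimate for the $(d-2)$-sphere $\Sigma_y$, applied slicewise in $H_y$ and assembled by Fubini in the normal direction; then to handle the outer convolution against $1_E$ and $1_G$ using the Littman--Strichartz $L^{(d+1)/d,1}\ra L^{d+1}$ bound for $\sph{d-1}$ in $\eun{d}$. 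Combining these with the trivial $L^\infty \tm L^\infty \ra L^\infty$ endpoint and multilinear complex interpolation on the Riesz diagram should yield the diagonal endpoint $(2(d+1)/d, 2(d+1)/d, d+1)$.

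The principal obstacle is that $\mc{A}_y$ on its own does not $L^p$-improve in the direction normal to $H_y$, so slicewise application of the $(d-2)$-sphere bound gives a weaker estimate than needed. The missing smoothing must be supplied by the outer spherical integration in $y$, which mixes different hyperplanes together, and making this coupling quantitative at the endpoint is the heart of the argument. Because the sharp Littman--Strichartz bounds at both levels hold only at the $L^{p,1}$ Lorentz endpoint, the resulting estimate for $T_1$ is necessarily of restricted strong-type rather than strong-type, matching the statement of the theorem.
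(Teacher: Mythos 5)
This statement is quoted in the paper from \cite{IPS} (Thm.\ 1.2) and is not proved here, so your proposal must be judged against the argument in that reference. As it stands, your proposal is a strategy outline rather than a proof: the decisive step is explicitly left open. You correctly observe that the slicewise application of the $(d-2)$-dimensional Littman--Strichartz bound to $\mc{A}_y$ gives no improvement in the direction normal to $H_y$, and you then assert that "the missing smoothing must be supplied by the outer spherical integration in $y$," but you never make this coupling quantitative; the concluding appeal to multilinear complex interpolation is vacuous because you have not exhibited any nontrivial estimates for the trilinear form to interpolate between (the only complete input is the trivial $L^\nf\tm L^\nf\ra L^\nf$ bound), so the endpoint $(\tfrac{2(d+1)}{d},\tfrac{2(d+1)}{d},d+1)$ is never actually reached. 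A secondary error: the Littman--Strichartz vertex bound $\mc{A}\cl L^{(d+1)/d}\ra L^{d+1}$ (and its analogue for the $(d-2)$-sphere) is a genuine strong-type estimate, not merely an $L^{p,1}$ endpoint, so your explanation of why the conclusion is only of restricted type is incorrect.

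For comparison, the proof in \cite{IPS} is much shorter and bypasses the fibered analysis entirely. Since both marginals of $\mu$ are constant multiples of $\sigma$ on $\sph{d-1}$, Cauchy--Schwarz in the measure $\mu$ gives the pointwise bound $T_1(1_E,1_F)(x)\lesssim \prs{\mc{A}_1 1_E(x)}^{1/2}\prs{\mc{A}_1 1_F(x)}^{1/2}$, where $\mc{A}_1$ is the unit spherical average and one uses $1_E^2=1_E$, $1_F^2=1_F$; this is exactly where the restriction to characteristic functions, hence the restricted strong type, enters (not any Lorentz-endpoint issue). Then H\"older with exponents $2(d+1)$, $2(d+1)$, $\tfrac{d+1}{d}$ and Littman's bound $\norms{\mc{A}_1 1_E}_{L^{d+1}}\lesssim |E|^{d/(d+1)}$ yield $\abs{\angs{T_1(1_E,1_F),1_G}}\lesssim |E|^{\frac{d}{2(d+1)}}|F|^{\frac{d}{2(d+1)}}|G|^{\frac{d}{d+1}}$, which is precisely the stated restricted strong-type bound. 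To salvage your fibered route you would have to supply the quantitative estimate showing how the outer $y$-integration restores the missing smoothing, which is exactly the portion you have deferred; without it the proposal does not establish the theorem.
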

\noindent We can interpolate this bound with the following bounds, also from \cite{IPS}, to get a further range of admissible exponents for which Theorem \ref{thm0} gives a sparse form bound for $\tlac$.
\begin{theorem}[\cite{IPS}, Thm.\ 1.1]
The triangle averaging operator $T_1$ satisfies the bound
\[
T_1\cl L\os{\frac{d+1}{d}}(\eun{d})\tm L\os{\frac{d+1}{d}}(\eun{d}) \ra L\os{s}(\eun{d}), \quad \text{ for all }~~~ s \in [\textstyle{\frac{d+1}{2d}}, 1] ~\text{ and }~~ d\geq 2,
\]
Moreover, 
\[
T_1\cl L\os{p}(\eun{d})\tm L\os{q}(\eun{d}) \ra L\os{1}(\eun{d})
\]
if and only if $(\frac{1}{p}, \frac{1}{q})$ lies in the convex hull of the points $\{(0,1), (1,0), (\frac{d}{d+1}, \frac{d}{d+1})\}$.
\end{theorem}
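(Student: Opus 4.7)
The ``if and only if'' characterization in the ``moreover'' clause reduces to establishing the bound at the three extreme points of the hull together with matching lower bounds. The vertices $(0,1)$ and $(1,0)$ are elementary: Fubini and $\mu(\mf{T}) < \infty$ give
\[
\norm{T_1(f,g)}_{L^1} \leq \norm{f}_{L^\infty}\norm{g}_{L^1}\mu(\mf{T})
\]
and its symmetric counterpart. The substantive content is the symmetric Strichartz-type endpoint $T_1 \cl L^{(d+1)/d} \tm L^{(d+1)/d} \ra L^1$ at $(d/(d+1), d/(d+1))$; bilinear complex (Calder\'on) interpolation between the three vertex bounds then yields $T_1 \cl L^p \tm L^q \ra L^1$ throughout the hull.

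To attack the central endpoint I would exploit the iterated-spherical structure of $\mf{T}$: for fixed $y \in \sph{d-1}$, the constraints $\abs{z} = \abs{y - z} = 1$ force $z$ onto the $(d-2)$-sphere
\[
\Gamma_y := \cbrks{z \in \eun{d} \cl \abs{z} = 1,\; y \cdot z = \tfrac{1}{2}}
\]
of radius $\sqrt{3}/2$ centered at $y/2$, yielding the representation
\[
T_1(f,g)(x) = c \int_{\sph{d-1}} f(x - y)\, A_y g(x)\dx{\sigma(y)}
\]
with $A_y$ the spherical average over $\Gamma_y$. From here one can either (i) compute $\hat\mu$ by stationary phase --- $\dim \mf{T} = 2d - 3$ in $\eun{2d}$ gives $\abs{\hat\mu(\xi,\eta)} \lesssim (1 + \abs{(\xi,\eta)})^{-(2d-3)/2}$ away from degenerate directions --- and combine Plancherel, Hausdorff--Young, and a trilinear interpolation; or (ii) apply H\"older in the outer variable to split $f$ from $A_y g$ and invoke the Stein--Strichartz $L^p$-improving of the outer sphere together with the rotationally uniform $L^p$-improving of the $(d-2)$-dimensional family $\cbrks{A_y}_y$. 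The range $s \in [(d+1)/(2d), 1]$ in the first claim then follows by real interpolation of the $s = 1$ Strichartz endpoint with the elementary Cauchy--Schwarz bound $\abs{T_1(f,g)(x)}^2 \leq A(\abs{f}^2)(x)\cdot A(\abs{g}^2)(x)$, where $A$ is the marginal spherical average on $\sph{d-1}$, which furnishes an inequality at the H\"older exponent $s = (d+1)/(2d)$.

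For the necessity of the hull, Knapp-type test functions --- characteristic functions of small balls centered near $x_0 - y^*$ and $x_0 - z^*$ for a fixed triangle configuration $(y^*,z^*) \in \mf{T}$, and refinements thinned in tangent directions to $\mf{T}$ to reach the sharp boundaries $1/p + d/q \leq d$ and $d/p + 1/q \leq d$ --- force $(1/p,1/q)$ into the convex hull. The main obstacle throughout is the control of $\hat\mu$ at its degenerate stationary directions, corresponding to nearly collinear $0,y,z$; the slicing strategy sidesteps this delicacy at the cost of requiring uniform-in-$y$ $L^p$-improving for the family $\cbrks{A_y}_y$, whose verification relies on the full rotational symmetry of the triangle configuration.
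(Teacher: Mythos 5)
This statement is quoted verbatim from \cite{IPS} (their Theorem 1.1); the present paper supplies no proof of it, so your attempt can only be measured against the cited source and against correctness on its own terms. Your outer skeleton is the right shape: the vertices $(0,1)$ and $(1,0)$ are indeed trivial, the content sits at the symmetric vertex and at the quasi-Banach H\"older exponent, sufficiency on the hull follows by bilinear interpolation, and necessity comes from test functions (a small ball paired with a thin annulus gives $d/p+1/q\le d$ and $1/p+d/q\le d$; note your two families alone do not ``force $(1/p,1/q)$ into the hull'' --- you also need $1/p+1/q\ge 1$, obtained e.g.\ from $f=g=1_{B_R}$, $R\to\infty$).

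The genuine gaps are at both nontrivial endpoints. First, the $s=\frac{d+1}{2d}$ claim: the pointwise bound $|T_1(f,g)|^2\le c\,A(|f|^2)\,A(|g|^2)$ is true, but it cannot ``furnish'' the bound at the H\"older exponent. Pushing it through $L^s$ with $s=\frac{d+1}{2d}<1$ would require the spherical average to be bounded on $L^{(d+1)/(2d)}$, an exponent below $1$, where it fails; more decisively, the majorant itself violates the desired inequality: for $f=g=1_{B_\delta}$ one has $T_1(f,g)\equiv 0$ for $\delta<1/2$ (the constraint $|y-z|=1$ keeps the two points $1$-separated, precisely the bilinear structure that Cauchy--Schwarz discards by passing to marginals), while $A(1_{B_\delta})\approx \delta^{d-1}$ on a unit annulus of thickness $\delta$, so the majorant's $L^{(d+1)/(2d)}$ norm is $\approx\delta^{(d^2+2d-1)/(d+1)}$, which exceeds $\|f\|_{L^{(d+1)/d}}\|g\|_{L^{(d+1)/d}}\approx\delta^{2d^2/(d+1)}$ by a factor $\delta^{-(d-1)^2/(d+1)}\to\infty$. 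Hence the whole range $s\in[\frac{d+1}{2d},1)$ is unproven in your scheme. Second, the improving vertex $L^{(d+1)/d}\times L^{(d+1)/d}\to L^1$: in your route (ii), each fixed $A_y$ is convolution with a measure carried by the affine hyperplane $\{z\cdot y=\tfrac12\}$, and a measure supported in a hyperplane has no $L^p\to L^q$ improving in $\mathbb{R}^d$ for any $q>p$ (shrink test functions in the normal direction), so ``rotationally uniform $L^p$-improving of the family $\{A_y\}_y$'' is not an available ingredient; even the averaged substitute $\int_{\mathbb{S}^{d-1}}\|A_y g\|_{L^{d+1}}\,d\sigma(y)\lesssim\|g\|_{L^{(d+1)/d}}$ fails on $g=1_{B_\delta}$. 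The gain has to come from keeping the $y$-integration coupled to $f(x-y)$, which is exactly the step your sketch does not supply; route (i) is only a plan (the decay $(2d-3)/2$ degenerates in some directions, as you note, and $L^2$-based multiplier estimates do not directly reach this off-$L^2$ vertex). So as written, neither the first display nor the hard implication of the ``moreover'' part is established.
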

\noindent Finally, adding in the trivial bound $|T_{1}(f,g)(x)| \leq \| f\|_{L^{\infty}} A_{1}|g|(x)$ and the $L^p$ improving estimate $A_1 : L^{\frac{d+1}{d}}\rightarrow L^{d+1}$ one gets that trivially $T_{1}:L^{\infty} \times L^{\frac{d+1}{d}} \rightarrow L^{d+1}$ and symmetrically $T_{1}:L^{\frac{d+1}{d}}\times L^{\infty}  \rightarrow L^{d+1}$. Thus we have the following corollary of Theorem \ref{thm0}.
\begin{corollary}
	Let $(1/p, 1/q, 1/r)$ be in the interior of the set $S$ formed by intersection of the convex hull 
	\[
	\textstyle\conv\cbrks{\prs{0,0,0}, \prs{0, 1, 1}, \prs{ 0, \frac{d}{d+1}, \frac{1}{d+1} }, \prs{1, 0, 1}, \prs{\frac{d}{d+1}, 0, \frac{1}{d+1}}, \prs{\frac{d}{d+1}, \frac{d}{d+1}, 1}, \prs{\frac{d}{d+1}, \frac{d}{d+1}, \frac{2d}{d+1}}}
	\]
	with the half-spaces $r \geq p$ and $r \geq q$ when $d\geq 13$. Then
	\begin{equation*}
	\abs{\angss{\tlac(f,g), h}} \lesssim \sup_{\mc{S}}\Lambda_{\mc{S}, p, q, r'}(f, g, h)
	\end{equation*}
	for all  $f \in L^p$, $g \in L^q$, and $h \in L^r$,
	where the supremum is over all sparse collections.
\end{corollary}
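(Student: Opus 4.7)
The plan is to derive the corollary as a direct application of Theorem \ref{thm01} with $L_t = T_t$. Three items must be verified: (a) every $(1/p, 1/q, 1/r)$ in the interior of $S$ lies in the interior of the boundedness set $\mf{T}$ of $T_1$; (b) $T_t$ admits at least one $L^p$-improving estimate somewhere in $\mf{T}$; and (c) the $L^p$-continuity estimates in the hypothesis of Theorem \ref{thm01} hold throughout the interior of $\mf{T}$. Item (c) is handled in Section \ref{secContEst}, where the requisite continuity estimates for $T_t$ are established. The side conditions $r \geq p, q$ and $r > 1$ appearing in Theorem \ref{thm01} are already built into $S$: the former by intersecting with the half-spaces $r \geq p$ and $r \geq q$, and the latter because the face $1/r = 1$ of the convex hull does not meet its interior.

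For (a), I would identify each of the six vertices of the convex hull as a known point of $\mf{T}$. The trivial H\"older bounds $L^\infty \tm L^\infty \ra L^\infty$, $L^1 \tm L^\infty \ra L^1$, and $L^\infty \tm L^1 \ra L^1$ account for $(0,0,0)$, $(1,0,1)$, and $(0,1,1)$ respectively. Theorem 1.1 of \cite{IPS} places $(d/(d+1), d/(d+1), 2d/(d+1))$ and $(d/(d+1), d/(d+1), 1)$ in $\mf{T}$ as strong-type bounds, while Theorem 1.2 of \cite{IPS} places $(d/(2(d+1)), d/(2(d+1)), 1/(d+1))$ in $\mf{T}$ as a restricted strong-type bound. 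Bilinear Riesz--Thorin interpolation on the five strong-type vertices yields strong-type bounds throughout their convex hull, and interpolating this with the restricted strong-type vertex via the bilinear Marcinkiewicz-type real interpolation theorem then gives strong-type bounds at every interior point of the full six-vertex convex hull. Thus every interior point of $S$ lies in the interior of $\mf{T}$. For (b), computing at the vertex coming from Theorem 1.2 of \cite{IPS} one finds $1/p + 1/q - 1/r = d/(d+1) - 1/(d+1) = (d-1)/(d+1) > 0$ for $d \geq 2$, which supplies the $L^p$-improving hypothesis.

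The only slightly subtle step is promoting the restricted strong-type information at one vertex into strong-type information on the interior of the convex hull; this is standard once the other five vertices are known to be strong-type, so no analytic input beyond the two cited theorems of \cite{IPS} (together with the continuity estimates of Section \ref{secContEst}) is needed. With (a), (b), and (c) in hand, the sparse bound for $\tlac$ claimed in the corollary is immediate from Theorem \ref{thm01}.
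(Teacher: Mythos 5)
Your proposal is correct and follows essentially the same route as the paper: the paper likewise obtains the six vertices from the trivial H\"older-type bounds together with Theorems 1.1 and 1.2 of \cite{IPS}, interpolates to fill in the hull, and then invokes the abstract sparse domination theorem (Theorem \ref{thm1}) with the continuity estimates of Section \ref{secContEst}; you merely spell out the restricted-strong-type upgrade in more detail than the paper does. One small remark: your justification that $r>1$ ("the face $1/r=1$ does not meet the interior") is loosely worded, since the hull does contain interior points with $1/r=1$; the correct reason is that the half-space constraints give $2/r \leq 1/p+1/q \leq \frac{2d}{d+1} < 2$ on all of $S$, so $r \geq \frac{d+1}{d} > 1$ there.
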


\noindent For a wider range of estimates we direct the reader to \cite{BFO25}.

For the single-scale maximal triangle averaging operator, we have by a simple estimate using the known bounds due to Schlag (see \cite{SchlagSogge}) for the single-scale maximal spherical averaging operator $\sup_{s\in[1,2]}\abs{\mc{A}_s(f)(x)}$, that
\[
T_{\star, t} \cl L^p \tm L^\nf \ra L^q \quad \text{ and } \quad T_{\star, t} \cl L^\nf \tm L^p \ra L^q
\]
for $(1/p,1/q)$ in the convex hull of the points
\[
M = (0,0), \quad N = \prs{\frac{d-1}{d}, \frac{d-1}{d}}, \quad P = \prs{\frac{d-1}{d}, \frac{1}{d}}, \quad \text{add} \quad Q = \prs{ \frac{d^2 - d}{d^2 + 1},  \frac{d-1}{d^2 + 1}}. 
\]
These bounds include points with a maximum $L^p$-improving factor of $d$ in dimension $d$. 
These can be interpolated against the bounds for $T_{\star, t}$ that are a trivial consequence of the following bounds for the full maximal triangle averaging operator obtained by Cook, Lyall, and Magyar~\cite{CLM}, to obtain a range of $L^p$-improving bounds that can be applied along with Theorem \ref{thm02} to obtain sparse bounds for $\tful$, assuming one was able to get continuity estimates too. 
\begin{theorem}[\cite{CLM}, Thm.\ 4]
	The operator $\tful$ satisfies the bounds 
	\[
		\tful \cl L\os{l\frac{d}{d-1}}\tm L\os{l\frac{d}{d-1}} \ra L\os{\frac{l}{2}\frac{d}{d-1}} \quad \text{ with } \quad l = \frac{m}{m-1},
	\]
	for $m\geq 2$ an integer and $d \geq 2m$. 
\end{theorem}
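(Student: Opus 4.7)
The plan is to control the full bilinear maximal operator $\tful$ by combining a Sobolev-type embedding in the scale variable with a Hölder scaling bound for $T_t$ and a smoothing estimate for the scale derivative $\partial_t T_t$. The integer parameter $m$ and the dimensional restriction $d\geq 2m$ enter naturally in the smoothing step, which is the substantive part of the argument.

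By the homogeneity $T_{\lambda t}(f,g)(\lambda x) = T_t(f(\lambda\cdot), g(\lambda\cdot))(x)$ and a standard dyadic decomposition in $t$, it suffices to control $\sup_{t\in[1,2]}|T_t(f,g)(x)|$ at a single dyadic scale. For this I would use the one-dimensional Sobolev embedding
\[
\sup_{t\in[1,2]}|F(t)|^m \lesssim \int_1^2 |F(t)|^m\,dt + \int_1^2 |F'(t)|^m\,dt,
\]
applied pointwise to $F(t) = T_t(f,g)(x)$, and then take $L^r$ norms in $x$ at the target exponent $r = (l/2)\,d/(d-1)$. This reduces the maximal bound to two fixed-scale estimates, one for $T_t$ and one for $\partial_t T_t$, each at the Hölder scaling $1/p+1/q=1/r$ with $p=q=l\,d/(d-1)$.

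The bound for $T_t$ at Hölder scaling is immediate from Hölder's inequality on the measure $\mu$, since the marginals of the triangle measure are both comparable to surface measure on $\sph{d-1}$, giving a pointwise bound in terms of $\mc{A}_t(|f|^m)^{1/m}\mc{A}_t(|g|^{m'})^{1/m'}$ that collapses after a further application of Hölder in $x$. The main work lies in the corresponding bound for $\partial_t T_t$: this is a bilinear convolution with a tangential derivative of the singular measure on the codimension-$3$ variety $\mathfrak{T}\subset\rnm^{2d}$, and the required $L^r$ estimate is obtained by interpolating a Plancherel-based $L^2$ bound, exploiting the Fourier decay of the triangle measure and its derivative, against the trivial $L^p\tm L^\nf\ra L^p$ type bound. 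The threshold $d\geq 2m$ encodes precisely the condition for this interpolation to land at the target exponent $r$ while still gaining from the curvature-induced Fourier decay.

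The main obstacle is the smoothing estimate for $\partial_t T_t$, which rests on a careful Fourier-analytic analysis of the surface measure on $\mathfrak{T}$ and its first-order derivatives. The decay rate $|\wh{\mu}(\xi,\eta)|\lesssim (|\xi|+|\eta|)^{-\alpha}$, with $\alpha$ determined by the curvature of $\mathfrak{T}$, when multiplied by the symbol of $\partial_t$ and interpolated against Hölder scaling, dictates the dimensional threshold $d\geq 2m$. Once this smoothing estimate is in place, the Sobolev embedding together with dyadic summation in $j$ closes the argument and yields the stated bound on $\tful$ for $t\in(0,\infty)$.
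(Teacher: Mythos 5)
This statement is not proved in the paper at all: it is imported verbatim as a citation of Cook, Lyall, and Magyar \cite{CLM}, so there is no internal argument to compare yours against, and your sketch has to stand on its own. It does not. The first gap is the opening reduction: since the stated exponents satisfy exact H\"older scaling, $\frac{1}{p}+\frac{1}{q}=\frac{1}{r}$ with $p=q=l\frac{d}{d-1}$ and $r=\frac{l}{2}\frac{d}{d-1}$, the rescaled single-block estimates for $\sup_{t\in[2^j,2^{j+1}]}|T_t(f,g)|$ carry no decay whatsoever in $j$, and a bound for $\tful=\sup_j(\cdot)$ cannot be recovered from uniform single-block bounds: the only available summation, $\sum_j\|\cdot\|_{L^r}^{\min(r,1)}$, diverges. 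This is precisely why the present paper quotes \cite{CLM} for $\tful$ rather than deducing it from its own single-scale estimates for $T_{\star,t}$. Every known proof of a full maximal bound at non-improving exponents (Stein's spherical maximal theorem, Rubio de Francia's argument, Jeong--Lee, and \cite{CLM} itself) first decomposes the measure in frequency, proves a maximal bound over all $t>0$ for each frequency-localized piece with geometric decay coming from the Fourier decay of the measure (the low-frequency piece being compared with the Hardy--Littlewood maximal function, which is unavoidable since $\widehat{\mu}(0)\neq0$), and only then interpolates. Your outline contains no such decomposition; ``dyadic summation in $j$ closes the argument'' is exactly the step that fails.

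The fixed-scale steps you call immediate are also not. H\"older on $\mu$ with exponents $(m,m')$ gives $|T_t(f,g)|\le \mc{A}_t(|f|^m)^{1/m}\mc{A}_t(|g|^{m'})^{1/m'}$, and after H\"older in $x$ at the symmetric exponent $P=\frac{m}{m-1}\frac{d}{d-1}$ you need the spherical average (or, for the maximal variant, the spherical maximal operator) to be bounded on $L^{P/m}$. But $P/m=\frac{d}{(m-1)(d-1)}<1$ for every $m\ge 3$ with $d\ge 2m$, where even the single average $\mc{A}_1$ is unbounded; and for the maximal variant one needs $P/m>\frac{d}{d-1}$, i.e.\ $P>\frac{md}{d-1}$, which is incompatible with $P=\frac{m}{m-1}\frac{d}{d-1}$ for every $m\ge2$. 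So the ``cheap'' argument cannot reach the stated exponents; that is the entire content of the theorem. Likewise, the proposed interpolation for $\partial_tT_t$ between a Plancherel-based $L^2\times L^2$ bound and trivial $L^p\times L^\infty\to L^p$ bounds cannot reach the target: for $m\ge3$ the diagonal point has $\frac{1}{p}+\frac{1}{q}=\frac{2}{P}>1$, which lies outside the convex hull of points with $\frac{1}{p_1}+\frac{1}{p_2}\le1$, so genuinely $L^p$-improving (sub-$L^1$-target) inputs are needed, and no computation is offered showing where $d\ge2m$ or the integrality of $m$ enters. As written, the sketch reproduces the easy estimates and defers the actual difficulties --- the gain across scales at H\"older scaling and the source of the threshold $d\ge2m$ --- to unproven assertions.
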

\noindent Thus we have the following corollary of Theorem \ref{thm02}.
\begin{corollary}\label{cor1}
	Let $m\geq 2$ and the dimension $d \geq 2m$, and let $(1/p, 1/q, 1/r)$ be in the interior of the set $S'$ formed by intersection of the convex hull 
	\begin{align*}
		\conv &\textstyle\left\{\prs{0,0,0}, \prs{ \frac{d-1}{d}, 0, \frac{d-1}{d} }, 
			\prs{ 0, \frac{d-1}{d}, \frac{d-1}{d} },
			\prs{ \frac{d-1}{d}, 0, \frac{1}{d} }, 
			\prs{ 0, \frac{d-1}{d}, \frac{1}{d} }, 
			\prs{ \frac{d^2 - d}{d^2 + 1}, 0,  \frac{d-1}{d^2 + 1}},
			\right.\nlspace
			&\hspace*{20mm} \textstyle\left.
				\prs{ 0, \frac{d^2 - d}{d^2 + 1}, \frac{d-1}{d^2 + 1}},
				\prs{ \frac{d-1}{ld}, \frac{d-1}{ld}, \frac{2(d-1)}{ld} }
				\right\}
	\end{align*}
	with the half-spaces $r \geq p$ and $r \geq q$, where $l = \frac{m}{m-1}$. Further, assume a continuity estimate as is needed for Theorem \ref{thm02} is available for dimension $d$. Then
	\begin{equation*}
	\abs{\angss{\tful(f,g), h}} \lesssim \sup_{\mc{S}}\Lambda_{\mc{S}, p, q, r'}(f, g, h)
	\end{equation*}
	for all  $f \in L^p$, $g \in L^q$, and $h \in L^r$,
	where the supremum is over all sparse collections.
\end{corollary}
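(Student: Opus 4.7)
The plan is to deduce this from Theorem \ref{thm02} applied to $L_t = T_t$. The hypotheses to verify are threefold: that the boundedness set $\mf{B}_\star$ of $T_{\star,t}$ contains the convex hull $S'$ and so has nonempty interior; that the $L^p$-continuity estimate holds for $T_{\star,t}$ throughout the interior of $\mf{B}_\star$; and that there is some triple $(p,q,r)$ with all exponents finite for which the linearized operator $T_{t(\cdot)}$ is bounded $L^p\tm L^q \ra L^r$ uniformly in measurable $t\cl \eun{d}\ra [1,2]$.

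First I would verify that every vertex of the convex hull defining $S'$ lies in $\mf{B}_\star$. The first seven vertices arise from the Schlag-type bounds quoted in the excerpt, which give $T_{\star,t}\cl L^p\tm L^\infty \ra L^q$ and $T_{\star,t}\cl L^\infty\tm L^p\ra L^q$ whenever $(1/p,1/q)$ lies in the planar convex hull of $M, N, P, Q$. Embedding these two families into the three-dimensional index space by placing a $0$ in the first or second coordinate produces exactly the first seven listed vertices. The eighth vertex $(\frac{d-1}{ld}, \frac{d-1}{ld}, \frac{2(d-1)}{ld})$ is immediate from the Cook--Lyall--Magyar bound on $\tful$, which pointwise dominates $T_{\star,t}$ at the same indices. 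Bilinear interpolation then gives $S' \subseteq \mf{B}_\star$.

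Next, the $L^p$-continuity estimate for $T_{\star,t}$ required by Theorem \ref{thm02} is the content of Section \ref{secContEst}, so I would simply invoke the result proved there. For the linearized-radius hypothesis, I would use the pointwise bound $\abs{T_{t(\cdot)}(f,g)(x)}\leq T_{\star,1}(f,g)(x)$, valid for any measurable $t\cl \eun{d}\ra[1,2]$, which reduces the verification to producing a boundedness triple for $T_{\star,1}$ with no $\infty$ among the exponents. The eighth vertex supplies such a triple since $l = m/(m-1)$ is finite, so the three exponents $\frac{ld}{d-1}, \frac{ld}{d-1}, \frac{ld}{2(d-1)}$ are all finite.

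With all three hypotheses of Theorem \ref{thm02} in hand, the desired sparse form bound follows for every $(1/p,1/q,1/r)$ in the interior of $S'$ with $r\geq p,q$; the condition $r > 1$ required by Theorem \ref{thm02} is automatic at interior points since none of the vertices has $1/r = 1$. I expect the main technical obstacle to lie not in this deduction but in the establishment of the continuity estimate for $T_{\star,t}$ of Section \ref{secContEst}, which is the key ingredient enabling the passage from $L^p$-improving bounds for the maximal operator to sparse bounds for the full maximal operator.
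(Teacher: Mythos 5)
Your proposal is correct and takes essentially the paper's route: the seven ``flat'' vertices come from the Schlag-based $L^p\times L^\infty$ and $L^\infty\times L^p$ bounds for $T_{\star,t}$, the eighth from the Cook--Lyall--Magyar bound for $\tful$ (which dominates $T_{\star,t}$ pointwise), interpolation places $S'$ inside $\mf{B}_\star$, and the continuity estimates for $T_{\star,t}$ together with the abstract full-maximal theorem finish the argument -- your verification of the linearized-radius hypothesis via $\abss{T_{t(\cdot)}(f,g)}\leq T_{\star,1}(f,g)\leq \tful(f,g)$ and the CLM triple is a harmless shortcut compared with the paper's $L^\infty$/spherical-maximal interpolation route. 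One small correction: for $m\geq 3$ the eighth vertex has $1/r>1$, so ``no vertex has $1/r=1$'' does not by itself force $r>1$ on the interior; the correct reason is that every vertex has first and second coordinates at most $\frac{d-1}{d}$, so on $S'$ the half-space condition gives $\frac{1}{r}\leq\frac{1}{p}\leq\frac{d-1}{d}<1$, hence $r\geq\frac{d}{d-1}>1$ throughout $S'$.
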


\begin{example}
	In the case where $d = 10$ and $m = 5$, the set $S'$ in Corollary \ref{cor1} is the convex hull
	\[
	\textstyle\conv\cbrks{ \prs{0,0,0}, 
		\prs{\frac{4}{5}, \frac{1}{10}, \frac{1}{10}}, 
		\prs{\frac{1}{10}, \frac{4}{5}, \frac{1}{10}}, 
		\prs{\frac{81}{101}, \frac{9}{101}, \frac{9}{101}},
		\prs{\frac{9}{101}, \frac{81}{101}, \frac{9}{101}}, 
		\prs{\frac{288}{535}, \frac{288}{535}, \frac{288}{535}}
	},
	\]
	and is plotted below. We emphasize that this is conditional on having the required continuity estimate and note that no such continuity estimates exist yet.
	\begin{figure}[H]
		\centering
		\begin{tikzpicture}%
		[x={(0.884382cm, -0.095077cm)},
		y={(0.466763cm, 0.180196cm)},
		z={(-0.000025cm, 0.979025cm)},
		scale=6.000000,
		back/.style={dashed, thin},
		edge/.style={color=gray!95!black, thick},
		facet/.style={fill=gray!95!black,fill opacity=0.300000},
		vertex/.style={inner sep=1pt,circle,draw=black!25!black,fill=black!75!black,thick}]
		%
		%
		
		\draw[color=black,thick,<->] (-0.1,0,0) -- (0.85,0,0) node[anchor=north east]{\LARGE$\frac{1}{p}$};
		\draw[color=black,thick,->] (0,-0.1,0) -- (0,1,0) node[anchor=north west]{\LARGE$\frac{1}{q}$};
		\draw[color=black,thick,<->] (0,0,-0.1) -- (0,0,0.7) node[anchor=south]{\LARGE$\frac{1}{r}$};
		\coordinate (0.00000, 0.00000, 0.00000) at (0.00000, 0.00000, 0.00000);
		\coordinate (0.80000, 0.10000, 0.10000) at (0.80000, 0.10000, 0.10000);
		\coordinate (0.80198, 0.08911, 0.08911) at (0.80198, 0.08911, 0.08911);
		\coordinate (0.08911, 0.80198, 0.08911) at (0.08911, 0.80198, 0.08911);
		\coordinate (0.10000, 0.80000, 0.10000) at (0.10000, 0.80000, 0.10000);
		\coordinate (0.53832, 0.53832, 0.53832) at (0.53832, 0.53832, 0.53832);
		\draw[edge,back] (0.00000, 0.00000, 0.00000) -- (0.08911, 0.80198, 0.08911);
		\draw[edge,back] (0.80000, 0.10000, 0.10000) -- (0.10000, 0.80000, 0.10000);
		\draw[edge,back] (0.80198, 0.08911, 0.08911) -- (0.08911, 0.80198, 0.08911);
		\draw[edge,back] (0.08911, 0.80198, 0.08911) -- (0.10000, 0.80000, 0.10000);
		\draw[edge,back] (0.10000, 0.80000, 0.10000) -- (0.53832, 0.53832, 0.53832);
		\node[vertex] at (0.10000, 0.80000, 0.10000)     {};
		\node[vertex] at (0.08911, 0.80198, 0.08911)     {};
		\fill[facet] (0.80000, 0.10000, 0.10000) -- (0.53832, 0.53832, 0.53832) -- (0.00000, 0.00000, 0.00000) -- (0.80198, 0.08911, 0.08911) -- cycle {};
		\draw[edge] (0.00000, 0.00000, 0.00000) -- (0.80198, 0.08911, 0.08911);
		\draw[edge] (0.00000, 0.00000, 0.00000) -- (0.53832, 0.53832, 0.53832);
		\draw[edge] (0.80000, 0.10000, 0.10000) -- (0.80198, 0.08911, 0.08911);
		\draw[edge] (0.80000, 0.10000, 0.10000) -- (0.53832, 0.53832, 0.53832);
		\node[vertex] at (0.00000, 0.00000, 0.00000)     {};
		\node[vertex] at (0.80000, 0.10000, 0.10000)     {};
		\node[vertex] at (0.80198, 0.08911, 0.08911)     {};
		\node[vertex] at (0.53832, 0.53832, 0.53832)     {};
		\end{tikzpicture}
	\end{figure}
\end{example}

Jeong and Lee \cite{JeongLee} prove the following range of $L^p$-improving bounds for the single-scale bilinear spherical averaging operator $B_{\star, t}$, which trivially imply the same bounds for $B_t$. 
\begin{theorem}[\cite{JeongLee}, Thm.\ 3.1]
	Let $d \geq 2$, $1 \leq p,q, \leq \nf$, and $0 < r \leq d$ or $\frac{d(d-1)}{d - 2} \leq r < \nf$. Then the estimate
	\[
	\norms{B_{\star, t}(f,g)}_{L^r} \lesssim t\os{d\prs{\frac{1}{r} - \frac{1}{p} - \frac{1}{q}}}\norm{f}_{
		l^p}\norm{g}_{L^q}
	\] 
	holds for $\frac{1}{r} \leq \frac{1}{p} + \frac{1}{q} < \min\cbrks{ \frac{2d-1}{d}, 1 + \frac{d}{r} }$. 
\end{theorem}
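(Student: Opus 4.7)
The plan is to combine sharp $L^p$-improving estimates for the fixed-scale operator $B_s$ with a Sobolev embedding in the scale parameter $s$. By the homogeneity $B_s(f,g)(x) = B_1(f(s\,\cdot), g(s\,\cdot))(x/s)$, a change of variables reduces everything to bounding $B_{\star,1}$; the dilation factor $t^{d(1/r - 1/p - 1/q)}$ is recovered automatically by the rescaling.

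First I would establish the fixed-scale bound $B_1 \colon L^p \times L^q \to L^r$ throughout the stated range. Writing $B_1$ as a bilinear Fourier multiplier against $\widehat{\sigma}(\xi,\eta)$, where $\sigma$ is surface measure on $\sph{2d-1} \subset \eun{2d}$, the relevant input is the decay $|\widehat{\sigma}(\xi,\eta)| \lesssim (1 + |(\xi,\eta)|)^{-(2d-1)/2}$. A Littlewood--Paley decomposition $\sigma = \sum_{j\geq 0}\sigma_j$, with $\widehat{\sigma_j}$ supported in $|(\xi,\eta)| \sim 2^j$, gives $L^2 \times L^2 \to L^1$ bounds on each dyadic piece with gain $2^{-j(2d-1)/2}$ by Plancherel. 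Bilinear interpolation against the trivial $L^\infty \times L^\infty \to L^\infty$ estimate (and its $L^1 \times L^\infty$ endpoints) produces a geometrically summable series precisely when $1/p + 1/q < (2d-1)/d$, which yields the fixed-scale improvement.

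Second, to pass to the single-scale maximal operator I would use the Sobolev embedding in $s$: for $r > 1$,
\[
\sup_{s \in [1,2]} |B_s(f,g)(x)|^r \lesssim \int_1^2 |B_s(f,g)(x)|^r \, ds + \int_1^2 |\partial_s B_s(f,g)(x)|^r \, ds.
\]
Integrating in $x$ reduces the maximal estimate to uniform $L^r$ control of $B_s$ and of $\partial_s B_s$. The derivative $\partial_s B_s$ is itself a bilinear convolution whose multiplier is a radial derivative of $\widehat{\sigma}(s\,\cdot)$, so it carries an additional factor of order $|(\xi,\eta)|$. The same Plancherel-based dyadic bound applies but now costs one extra frequency power, so the summation closes only under the auxiliary constraint $1/p + 1/q < 1 + d/r$, which is precisely the second cutoff in the hypothesis.

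The hardest part will be reaching the sharp endpoint and explaining the split $r \leq d$ versus $r \geq d(d-1)/(d-2)$. These two regimes correspond to two distinct interpolation schemes: in the small-$r$ regime the critical input is the fixed-scale $L^2 \times L^2 \to L^1$ estimate interpolated with $L^1 \times L^\infty \to L^1$, while in the large-$r$ regime the Sobolev term involving $\partial_s B_s$ dominates and one must have $r$ large enough to absorb the lost frequency power. Closing the full open range up to the critical Fourier-decay line $1/p + 1/q = (2d-1)/d$ almost certainly requires a more refined tool than crude Plancherel on each shell, most plausibly a bilinear square-function or Stein--Tomas restriction argument adapted to $\sph{2d-1}$, together with a careful tracking of how the radial derivative interacts with the localized pieces.
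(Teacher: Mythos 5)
There is a genuine gap, and you have in effect conceded it in your last paragraph: the scheme you describe cannot reach the stated range, and the theorem (which this paper does not prove but imports from Jeong--Lee) is obtained there by a completely different mechanism. Concretely, your interpolation base consists of an $L^2\times L^2\to L^1$ bound from Plancherel on dyadic frequency shells together with the trivial $L^\infty\times L^\infty\to L^\infty$ and $L^1\times L^\infty$, $L^\infty\times L^1$ endpoints; every one of these points satisfies $\tfrac1p+\tfrac1q\le 1$, so no interpolation among them can produce exponents with $\tfrac1p+\tfrac1q>1$, let alone the full range $\tfrac1p+\tfrac1q<\tfrac{2d-1}{d}$ (which approaches $2$ for large $d$) or the quasi-Banach targets $0<r<1$ that the statement allows. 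Relatedly, the thresholds do not arise the way you claim: with decay $(1+|(\xi,\eta)|)^{-(2d-1)/2}$ the shell-by-shell Plancherel bounds are geometrically summable for every $d\ge 2$ irrespective of $(p,q)$, so ``summability closes precisely when $\tfrac1p+\tfrac1q<\tfrac{2d-1}{d}$'' is not a conclusion your argument yields; similarly the constraint $\tfrac1p+\tfrac1q<1+\tfrac dr$ and the dichotomy $r\le d$ versus $r\ge\tfrac{d(d-1)}{d-2}$ are asserted, not derived. (Your Sobolev-embedding step also needs care for $r<1$, where $\bigl(\int_1^2|\partial_s F|\,ds\bigr)^r$ is not controlled by $\int_1^2|\partial_s F|^r\,ds$.) What you have written is essentially the earlier Fourier-decay approach (as in Barrionuevo--Grafakos--He--Honz\'{\i}k--Oliveira), which is known to give only a restricted range.

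The actual proof in Jeong--Lee, Theorem 3.1, proceeds by slicing the sphere $\mathbb{S}^{2d-1}\subset\mathbb{R}^{2d}$ in the first variable: one writes
\begin{equation*}
B_s(f,g)(x)=\int_{|y|\le 1} f(x-sy)\,(1-|y|^2)^{\frac{d-2}{2}}\,\mathcal{A}_{s\sqrt{1-|y|^2}}\,g(x)\,dy,
\end{equation*}
where $\mathcal{A}_u$ is the linear spherical average in $\mathbb{R}^d$. Taking the supremum over $s\in[t,2t]$ dominates $B_{\star,t}(f,g)$ pointwise by a composition of a local averaging operator acting on $f$ and the local spherical maximal operator acting on $g$, and the stated range then follows from H\"older together with the known $L^p$-improving bounds for the local (single-scale) spherical maximal function in the Schlag and Schlag--Sogge range; this is exactly where the splitting $0<r\le d$ or $r\ge\tfrac{d(d-1)}{d-2}$ and the cutoffs $\tfrac{2d-1}{d}$ and $1+\tfrac dr$ come from, and the factor $t^{d(\frac1r-\frac1p-\frac1q)}$ is recovered by rescaling, as you note. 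If you want to salvage your write-up, the realistic options are either to reproduce this slicing argument or to scale back the claim to the smaller range your Plancherel-plus-interpolation scheme genuinely covers.
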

\noindent The corresponding bounds obtained for $B_t$ can be interpolated against the following $L^p$-improving bounds for $B_t$ obtained by Iosevich, Palsson, and Sovine \cite{IPS}.
\begin{theorem}[\cite{IPS}]
	In dimensions $d\geq 2$ the bilinear spherical averaging operator $B_t$ is bounded for $(1/p, 1/q, 1/r)$ in the convex hull of the set
	\[
		\cbrks{(1,0,1), (0,1,1), (1,1,1), (1,1, 2), (0,0,0) }. 
	\]
\end{theorem}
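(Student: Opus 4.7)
The plan is to verify boundedness at each of the five extreme vertices of the convex hull and then apply bilinear interpolation to fill it in. The vertices $(0,0,0)$, $(1,0,1)$, and $(0,1,1)$ are straightforward: $(0,0,0)$ is the pointwise bound $\abss{B_t(f,g)} \leq \norms{f}_{L^\nf}\norms{g}_{L^\nf}$ from $\sigma$ being a finite measure, and $(1,0,1)$ and $(0,1,1)$ follow from Fubini after pulling the $L^\nf$ factor outside.

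For the quasi-Banach vertex $(1,1,2)$ I would exploit a polar decomposition of $\sigma$. Parametrizing $(y,z) \in \sph{2d-1}$ by $y = \cos\theta \cdot u$, $z = \sin\theta \cdot v$ with $\theta \in [0, \pi/2]$ and $u, v \in \sph{d-1}$, the surface measure factors as $d\sigma = (\cos\theta\sin\theta)^{d-1}d\theta\, d\sigma_{\sph{d-1}}(u)\,d\sigma_{\sph{d-1}}(v)$, giving
\[
B_t(f,g)(x) = \int_0^{\pi/2}(\cos\theta\sin\theta)^{d-1}\mc{A}_{t\cos\theta}(f)(x)\,\mc{A}_{t\sin\theta}(g)(x)\,d\theta,
\]
where $\mc{A}_s$ is the standard linear spherical average. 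The quasi-subadditivity $\norms{\int G(\theta,\cdot)\,d\theta}_{L^{1/2}}^{1/2} \leq \int \norms{G(\theta,\cdot)}_{L^{1/2}}^{1/2}\,d\theta$ (from $(a+b)^{1/2} \leq a^{1/2}+b^{1/2}$), combined with H\"older's inequality $\norms{\mc{A}_{t\cos\theta}(f)\cdot\mc{A}_{t\sin\theta}(g)}_{L^{1/2}} \leq \norms{\mc{A}_{t\cos\theta}(f)}_{L^1}\norms{\mc{A}_{t\sin\theta}(g)}_{L^1} \lesssim \norms{f}_{L^1}\norms{g}_{L^1}$, reduces matters to the integrability of $(\cos\theta\sin\theta)^{(d-1)/2}$ on $[0,\pi/2]$, which holds for $d \geq 2$.

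The vertex $(1,1,1)$ is the main obstacle, since the pointwise product of two $L^1$ functions need not be $L^1$ and the decomposition above fails to close. Instead I would compute directly by Fubini,
\[
\norms{B_t(f,g)}_{L^1} = \int_{\sph{2d-1}}(f*\tilde g)(t(z-y))\,d\sigma(y,z),
\]
where $\tilde g(x) := g(-x)$, and analyze the pushforward $\pi_*\sigma$ under the map $\pi(y,z) = z-y$ from $\sph{2d-1}$ to $\rnm^d$. The measure $\pi_*\sigma$ is supported on $\{\abss{w}\leq\sqrt{2}\}$, and a direct coarea computation shows that it has a bounded density when $d \geq 2$: the fibers $\pi^{-1}(w)$ are $(d-1)$-spheres of radius $\sqrt{2-\abss{w}^2}/2$, whose shrinking near the singular locus $\{y = -z\}$ compensates for the degeneracy of the Jacobian there. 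A change of variables then yields $\norms{B_t(f,g)}_{L^1} \lesssim t^{-d}\norms{f}_{L^1}\norms{g}_{L^1}$. With all five vertices in hand, bilinear interpolation (Riesz--Thorin in the Banach range, supplemented by a quasi-Banach variant to accommodate $(1,1,2)$) fills the convex hull; as a consistency check, scaling forces the bound at $(1/p,1/q,1/r)$ to carry the factor $t^{d(1/r - 1/p - 1/q)}$, which matches the $t$-dependence obtained at each vertex.
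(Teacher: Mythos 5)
Your proposal is essentially correct, but note that the paper you are being compared against does not prove this statement at all: it is quoted from \cite{IPS} as a black-box input to Theorem \ref{thm1}, so there is no in-paper argument to match. Judged on its own terms, your vertex-plus-interpolation scheme is sound. The three Banach vertices are trivial as you say; the slicing identity $d\sigma_{\sph{2d-1}} = (\cos\theta\sin\theta)^{d-1}d\theta\,d\sigma_{\sph{d-1}}(u)\,d\sigma_{\sph{d-1}}(v)$ is the same decomposition used by Jeong and Lee \cite{JeongLee}, and together with the subadditivity of $\norms{\cdot}_{L^{1/2}}^{1/2}$ and H\"older it does give $L^1\times L^1\to L^{1/2}$, since $(\cos\theta\sin\theta)^{(d-1)/2}$ is bounded. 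Your key claim at $(1,1,1)$ also checks out quantitatively: writing the pushforward of $\sigma$ under $(y,z)\mapsto z-y$ as $\rho(w)\,dw$, a delta-function (or coarea) computation over the fibers $\{|y+w/2|=\sqrt{2-|w|^2}/2\}$ gives $\rho(w)=c_d\,(2-|w|^2)^{(d-2)/2}$, which is indeed bounded exactly when $d\geq 2$ (it blows up like $(2-|w|^2)^{-1/2}$ when $d=1$), yielding $\norms{B_t(f,g)}_{L^1}\lesssim t^{-d}\norms{f}_{L^1}\norms{g}_{L^1}$ after reducing to $f,g\geq 0$. The one step you should state more carefully is the interpolation: Riesz--Thorin alone does not reach targets below $L^1$, but because $B_t$ is a positive bilinear operator (integration against a nonnegative finite measure), multilinear interpolation with quasi-Banach targets is legitimate, e.g.\ via \cite{GrafakosM}*{Theorem 7.2.2}, which is precisely the tool this paper invokes for the analogous triangle-average interpolation; with that citation in place your argument fills the stated convex hull, and the scaling factor $t^{d(1/r-1/p-1/q)}$ is consistent at every vertex as you observe.
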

\noindent The bounds obtained from the results mentioned here and interpolation can be applied with Theorem \ref{thm0} to obtain sparse bounds for $\blac$ and $\bful$. The following example shows a concrete range in one case.

\begin{example}
	In the case where $d = 10$, Theorem \ref{thm0} gives sparse bounds for $\bful$ when $(1/p,1/q,1/r)$ lies in the interior of the set 
	\begin{align*}
		&\textstyle\conv\left\{
			\prs{1, \frac{9}{10}, \frac{9}{10}},
			\prs{\frac{9}{10}, 1, \frac{9}{10}},
			\prs{\frac{9}{10}, 1, \frac{1}{10}},
			\prs{1, \frac{9}{10}, \frac{1}{10}},\right.\nlspace
		&\textstyle\hspace*{3cm}\left.
			\prs{1, \frac{1}{10}, \frac{1}{10}},
			\prs{\frac{1}{10}, 1, \frac{1}{10}},
			\prs{\frac{1}{10}, \frac{1}{10}, \frac{1}{10}},
			\prs{\frac{19}{20},\frac{19}{20},\frac{19}{20}}
		\right\}\nlspace
	\textstyle\cup&\textstyle~\conv\left\{
		\prs{0,0,0},
		\prs{0,1,0},
		\prs{1,0,0},
		\prs{\frac{8}{9}, 1, \frac{4}{45}},
		\prs{1, \frac{8}{9}, \frac{4}{45}},\right.\nlspace
	&\textstyle\hspace*{3.5cm}\left.
		\prs{1, \frac{4}{45}, \frac{4}{45}},
		\prs{\frac{4}{45}, 1, \frac{4}{45}},
		\prs{\frac{4}{45}, \frac{4}{45}, \frac{4}{45}}
	\right\}.
	\end{align*}
\end{example}
\noindent For a wider range of estimates we direct the reader to \cite{BFOPZ}. 

\section{Abstract Sparse Domination Theorem}\label{secAbsDom}

Here we will prove a sparse domination theorem using an abstract version of the procedure presented in \cite{RSS}.
The proofs in this section follow those in \cite{RSS} closely, with appropriate modifications for application to a general class of operators. This theorem and its proof show how continuity and $L^p$-improving properties of bilinear convolutions\footnote{Below we state the continuity estimate in simultaneous form but by interpolation this is equivalent to continuity in each variable separately, since we just require some positive exponents $\eta_i$. We also state this estimate in the rescaled version, but this version follows immediately from the same estimate with $t = 1$ and a change of variables.}
\[
(f,g) \mapsto L_t(f,g)(x) := [(f\otimes g)\star \mu_t](x,x)
\]
with compactly supported finite Borel measures give rise to sparse bounds for the corresponding lacunary maximal operators
\[
\llac(f,g)(x) := \sup_{j \in \znm}\abss{L_{2^j}(f,g)(x)}. 
\]
In Section \ref{secFullMax} we indicate how the proofs here can be modified to prove sparse bounds for the full maximal triangle averaging operator, and indicate how similar techniques will work for convolutions with other measures as long as one has the required continuity properties for the corresponding single-scale maximal operator. We now remind the reader of the statement of Theorem \ref{thm01}.
\begin{theorem}\label{thm1}
	Let $L_t$ be a bilinear convolution with a compactly supported finite Borel measure $\mu$ satisfying an $L^p$-improving estimate 
	\[
	L_t \cl L^p \tm L^q \ra L^r \quad \text{for some $(p,q,r)$ with } \quad \frac{1}{p} + \frac{1}{q} > \frac{1}{r}, 
	\]
	and let 
	$\mf{B}$ be the \textit{boundedness set} of all triples $(1/p, 1/q, 1/r)$ such that 
	\[
	L_t \cl L^p(\eun{d})\tm L^q(\eun{d}) \ra L^r(\eun{d}). 
	\]
	Suppose that $L_t$ satisfies an $L^p$-continuity estimate
	\[
	\norm{L_t((I - \tau_{y_1})f, (I - \tau_{y_2})g)}_{L^r} \lesssim t\os{d\prs{\frac{1}{r} - \frac{1}{p} - \frac{1}{q}}}\prs{\frac{\abs{y_1}}{t}}\os{\eta_1}\prs{\frac{\abs{y_2}}{t}}\os{\eta_2}\norm{f}_{L^p}\norm{g}_{L^q}
	\]
	for some $\eta_1, \eta_2 > 0$
	for each $(1/p, 1/q, 1/r)$ in the interior of $\mf{B}$. 
	Then for each $(1/p, 1/q, 1/r)$ with $r \geq p,q$ and $r > 1$ in the interior of $\mf{B}$ we have the sparse form bound
	\[
	 \abss{\angss{\llac(f,g), h}} \lesssim \sup_{\mc{S}}\Lambda_{\mc{S}, p, q, r'}(f, g, h)
	\]
	for all $f, g, h$ bounded and compactly supported, where the supremum is taken over all sparse families.
\end{theorem}
%

Theorem \ref{thm1} will follow from Lemma \ref{lem1}, which is a version of Theorem \ref{thm1} for indicator functions. We postpone the proof of Theorem \ref{thm1} until after the proof of Lemma \ref{lem2} for the sake of efficiency, since we will repeat some of the arguments from the proof of Lemma \ref{lem1} in the proof of Theorem \ref{thm1}.

\begin{lemma}\label{lem1} Let $L_t$ be as in the statement of Theorem \ref{thm1}, and
	let $(1/p, 1/q, 1/r)$ with $r \geq p,q$ and $r > 1$ be in the boundedness set $\mf{B}$ of $L_1$. Then for compactly supported measurable indicator functions $f = 1_{F}$ and $g = 1_{G}$ and a compactly supported bounded function $h$ there exists a sparse collection $\mc{S}$ such that
	\begin{equation*}
		\angss{\llac(f,g), h} \lesssim \Lambda_{\mc{S}, p, q, r'}(f, g, h).
	\end{equation*}
\end{lemma}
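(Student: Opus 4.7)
The plan is to adapt the Calder\'on--Zygmund stopping-time construction of \cite{LaceySpherical, RSS} to the abstract operator $L_t$. Since $f = 1_F$, $g = 1_G$, and $h$ are compactly supported and $\mu$ has compact support, I first reduce to the case where all supports lie in a large dyadic cube $Q_0$ and the lacunary supremum is truncated to the finitely many scales $2^j \lesssim \ell(Q_0)$ that contribute nontrivially to $\angs{\llac(f,g), h}$.

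Next I build the sparse family $\mc{S}$ recursively. Initialize $\mc{S} = \{Q_0\}$; given $Q \in \mc{S}$, let $\mc{Q}^*(Q)$ denote the collection of maximal dyadic subcubes $Q' \subsetneq Q$ for which at least one of
\[
\angs{f}_{3Q', p} > C\angs{f}_{3Q, p}, \quad \angs{g}_{3Q', q} > C\angs{g}_{3Q, q}, \quad \text{or} \quad \angs{h}_{3Q', r'} > C\angs{h}_{3Q, r'}
\]
holds, for a suitably large absolute constant $C$, and append $\mc{Q}^*(Q)$ to $\mc{S}$. The weak-type $(1,1)$ bound for the dyadic maximal function applied to $\abss{f}\os{p}$, $\abss{g}\os{q}$, and $\abss{h}\os{r'}$ gives $\abss{\bigcup\mc{Q}^*(Q)} \leq |Q|/2$ once $C$ is large, so $F_Q := Q \setminus \bigcup\mc{Q}^*(Q)$ satisfies $|F_Q| \geq |Q|/2$ and $\mc{S}$ is sparse with parameter $\gamma = 1/2$.

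Split the pairing along the stopping tree, $\angs{\llac(f,g), h} = \sum_{Q \in \mc{S}}\int_{F_Q}\llac(f,g)(x)\,h(x)\,dx$. For $x \in F_Q$, the compact support of $\mu$ lets us replace $f, g$ by their localizations at scales $2^j \lesssim \ell(Q)$, giving $L_{2^j}(f,g)(x) = L_{2^j}(f 1_{3Q}, g 1_{3Q})(x)$; contributions from scales $2^j \gtrsim \ell(Q)$ are attributed to the dyadic ancestors of $Q$ and absorbed into their local bounds via the nested structure of $\mc{S}$. This reduces matters to the single-cube estimate
\[
\normm{\sup_{2^j \leq c\,\ell(Q)}\abss{L_{2^j}(f 1_{3Q}, g 1_{3Q})}}_{L^r(Q)} \lesssim |Q|\os{1/r}\,\angs{f}_{3Q, p}\,\angs{g}_{3Q, q},
\]
after which H\"older against $h$ on $F_Q$ using the stopping bound $\angs{h}_{3Q, r'}$ yields the desired local bound and summation over $\mc{S}$ completes the sparse form estimate.

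The main obstacle is controlling this lacunary supremum at a fixed interior $(1/p, 1/q, 1/r) \in \mf{B}$, since the $L^p$-improving estimate handles only a single scale. Following \cite{LaceySpherical, RSS}, I split
\[
\sup_j\abs{L_{2^j}(f,g)} \leq \sup_j\abs{L_{2^j}(f,g) - L_{2^j}(\varphi_{2^j}\star f, \varphi_{2^j}\star g)} + \sup_j\abs{L_{2^j}(\varphi_{2^j}\star f, \varphi_{2^j}\star g)},
\]
where $\varphi_{2^j}$ is a smooth approximate identity at scale $2^j$. The second piece is pointwise dominated by the bilinear Hardy--Littlewood maximal function, which already admits the required sparse-type bound. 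For the first piece, expand $f - \varphi_{2^j}\star f = \int\varphi_{2^j}(y)(I - \tau_y)f\,dy$ (and similarly for $g$) and apply the $L^p$-continuity hypothesis: the factors $(\abs{y}/2^j)\os{\eta_i}$ produce enough decay in $|y|$ to make the resulting sum over $j$ convergent in $L^r$ after integrating against $\varphi_{2^j}$, to which the $L^p$-improving estimate applies at the chosen interior point. Combining these ingredients produces the single-cube estimate above, completing the proof.
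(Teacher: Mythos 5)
There is a genuine gap at the heart of your argument: the ``single-cube estimate''
\[
\normm{\sup_{2^j \leq c\,\ell(Q)}\abss{L_{2^j}(f 1_{3Q}, g 1_{3Q})}}_{L^r(Q)} \lesssim |Q|\os{1/r}\,\angs{f}_{3Q, p}\,\angs{g}_{3Q, q}
\]
is essentially a local $L^p\tm L^q \ra L^r$ bound for the full lacunary maximal operator, which is far stronger than the single-scale hypotheses, and the approximate-identity argument you propose does not prove it. When you write $f-\varphi_{2^j}\star f=\int \varphi_{2^j}(y)(I-\tau_y)f\,dy$, the translations that occur satisfy $\abs{y}\sim 2^j$, so the continuity factor $(\abs{y}/2^j)^{\eta}$ is merely $O(1)$: the continuity estimate yields no gain at matching scales. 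You are then left summing the bare single-scale improving bounds $2^{jd(\frac1r-\frac1p-\frac1q)}\norms{f1_{3Q}}_{L^p}\norms{g1_{3Q}}_{L^q}$ over the infinitely many scales $2^j\leq c\,\ell(Q)$, and since $\frac1r-\frac1p-\frac1q<0$ this diverges as $j\ra-\nf$ (the functions are localized at scale $\ell(Q)$, not at scale $2^j$, so the scaling works against you at small scales). Mollifying at a smaller scale $2^{j-k}$ to gain $2^{-\eta k}$ does not repair this, because the main term $L_{2^j}(\varphi_{2^{j-k}}\star f,\varphi_{2^{j-k}}\star g)$ is then no longer pointwise dominated by $Mf\cdot Mg$ (the mollification window no longer covers the translation by $2^jy$). (A smaller point: after truncating at $\ell(Q_0)$ there remain infinitely many scales, not finitely many.)

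This is precisely the difficulty the paper's proof is organized to avoid. There one never proves an $L^r$ bound for the maximal operator on a cube; instead the operator at scale $2^{m-3}$ is paired only with localizations to cubes $Q$ of sidelength $2^m$ (the operators $L_Q^j$), the supremum over the stopping collection $\mc{D}_0$ is linearized via the disjoint sets $B_Q$, and the bound is proved for the trilinear form with stopping-time-controlled averages (Lemma \ref{lem2}). The continuity hypothesis enters only through the Calder\'on--Zygmund bad parts $\beta_{f,m-k}$, which have mean zero on cubes of sidelength $2^{m-k}\ll 2^m$; a duality argument converts that cancellation into differences $(I-\tau_y)$ with $\abs{y}\sim 2^{m-k}$, so the factor $(\abs{y}/2^m)^\eta$ genuinely produces the geometric decay $2^{-\eta k}$ that makes the sum over scales converge. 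Without some substitute for this cancellation mechanism (or an additional hypothesis such as Fourier decay of $\wh{\mu}$ plus Littlewood--Paley theory), the splitting into $L_{2^j}-L_{2^j}(\varphi_{2^j}\star\cdot,\varphi_{2^j}\star\cdot)$ plus a bilinear Hardy--Littlewood term cannot close, so the proposal as written does not establish the lemma.
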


\noindent We will prove Lemma \ref{lem1} using the following lemma.

\begin{lemma}\label{lem2}
	Let $L_t$ be as in the statement of Theorem \ref{thm1}, and
	let $(1/p, 1/q, 1/r)$ with $r \geq p,q$ and $r > 1$ be in the boundedness set $\mf{B}$ of $L_1$ and let $f = 1_{F}$ and $g = 1_{G}$ be measurable indicator functions supported in a dyadic cube $Q_0$ and $h$ a bounded function also supported in $Q_0$. Let $C_0 > 1$ and $\mc{D}_0$ a collection of dyadic subcubes of $Q_0$ such that 
	\[
	\sup_{Q' \in \mc{D}_0} \sup_{Q \colon Q' \se Q \se Q_0}
	\prs{ \frac{\angs{f}_{Q, p}}{\angs{f}_{Q_0, p}} + 
		\frac{\angs{g}_{Q, q}}{\angs{g}_{Q_0, q}} + 
		\frac{\angs{h}_{Q, r'}}{\angs{h}_{Q_0, r'}} } 
	\leq C_0.
	\]
	Then, if $\{B_Q\}$ is a family of pairwise disjoint sets indexed by $Q \in \mc{D}_0$ with $B_Q \se Q$ and $h_Q := h1_{B_Q}$, we have the estimate
	\[
	\sum_{Q \in \mc{D}_0}{\angss{L_{Q}^j(f,g), h_Q}} \lesssim |Q_0|\angs{f}_{Q_0, p}\angs{g}_{Q_0, q}\angs{h}_{Q_0, r'},
	\]
	where $L_{Q}^j$ is as defined in the proof of Lemma \ref{lem1}.
\end{lemma}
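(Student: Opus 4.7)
The plan is to deduce the lemma from a per-cube estimate of the form
\[
\angss{L_Q^j(f,g), h_Q} \lesssim |Q|^{\frac{1}{r}}\angs{f}_{Q,p}\angs{g}_{Q,q}\norms{h_Q}_{L^{r'}},
\]
obtained via the hypothesized $L^p$-improving estimate, and then to sum these estimates over $\mc{D}_0$ by applying H\"older's inequality on the summation together with the pairwise disjointness of $\{B_Q\}$ and the stopping-time control on the averages.

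For the per-cube estimate, I would use that $\mu$ has compact support and that, as I expect from the construction in the proof of Lemma \ref{lem1}, the scale $2^j$ associated to $L_Q^j$ is comparable to $\ell(Q)$. This means $L_Q^j(f,g)(x)$ for $x \in B_Q \se Q$ depends only on the restrictions of $f,g$ to a constant dilate $\widetilde{Q}$ of $Q$. The $L^p$-improving estimate at scale $2^j \sim \ell(Q)$ then yields
\[
\norms{L_Q^j(f,g)}_{L^r(B_Q)} \lesssim \ell(Q)^{d\prs{\frac{1}{r}-\frac{1}{p}-\frac{1}{q}}}|Q|^{\frac{1}{p}+\frac{1}{q}}\angs{f}_{\widetilde{Q},p}\angs{g}_{\widetilde{Q},q} \sim |Q|^{\frac{1}{r}}\angs{f}_{Q,p}\angs{g}_{Q,q},
\]
where the passage from $\widetilde{Q}$-averages to $Q$-averages is admissible since $\widetilde{Q}$ is a bounded dilate of $Q$ contained in a nearby ancestor on which the stopping hypothesis controls averages by $C_0\angs{\cdot}_{Q_0}$. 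Pairing with $h_Q$ via H\"older in $(r,r')$ produces the asserted per-cube bound.

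Summation then proceeds via H\"older's inequality with exponents $(r, r')$:
\[
\sum_{Q}\angss{L_Q^j(f,g), h_Q} \leq \prs{\sum_{Q}|Q|\angs{f}_{Q,p}^{r}\angs{g}_{Q,q}^{r}}^{1/r}\prs{\sum_{Q}\norms{h_Q}_{L^{r'}}^{r'}}^{1/r'}.
\]
The $h$-factor is immediate: the disjointness of $\{B_Q\}$ together with $\bigcup_Q B_Q \se Q_0$ gives $\sum_Q\norms{h_Q}_{L^{r'}}^{r'} \leq \int_{Q_0}|h|^{r'} = |Q_0|\angs{h}_{Q_0,r'}^{r'}$. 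For the main factor, the stopping condition reduces it to $C_0^2\angs{f}_{Q_0,p}\angs{g}_{Q_0,q}\bigl(\sum_Q|Q|\bigr)^{1/r}$.

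The main obstacle is thus the packing estimate $\sum_{Q\in\mc{D}_0}|Q|\lesssim|Q_0|$, which does not follow from the pairwise disjointness of $\{B_Q\}$ alone. I expect it to be enforced by the construction of $\mc{D}_0$ and $\{B_Q\}$ in the proof of Lemma \ref{lem1}: the cubes of $\mc{D}_0$ will be maximal stopping cubes, and $B_Q$ will be obtained from $Q$ by removing the union of strictly smaller selected cubes and possibly a small exceptional set, yielding $|B_Q|\gtrsim|Q|$. Disjointness then gives $\sum_Q|Q|\lesssim\sum_Q|B_Q|\leq|Q_0|$, and combining this with the preceding steps yields the desired bound with constant polynomial in $C_0$.
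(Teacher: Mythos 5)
There is a genuine gap, and it is fatal to this route: the packing estimate $\sum_{Q\in\mc{D}_0}|Q|\lesssim|Q_0|$ is false, and the construction in the proof of Lemma \ref{lem1}/\ref{lem6} does not provide it. The collection $\mc{D}_0$ is defined as the set of cubes of $\mc{F}$ \emph{not} contained in any stopping cube of $\mc{E}_{Q_0}$; it therefore contains, above every point that is never stopped, the entire nested tower of dyadic cubes at all scales between that point and $Q_0$. Each dyadic generation of such cubes already contributes measure comparable to $|Q_0|$, so $\sum_{Q\in\mc{D}_0}|Q|$ is in general unbounded (infinite if the stopping condition never triggers on a set of positive measure). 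Likewise, your expectation that $|B_Q|\gtrsim|Q|$ is not what the construction gives: the sets $B_Q = H_Q\setminus\bigcup_{P\subsetneq Q}H_P$ are merely the pairwise disjoint level sets on which $L_Q^j$ essentially realizes the supremum, and they can be arbitrarily small (or empty) relative to $Q$. Disjointness of $\{B_Q\}$ only helps you with the $h$-factor, exactly as in your computation $\sum_Q\norms{h_Q}_{L^{r'}}^{r'}\leq|Q_0|\angs{h}_{Q_0,r'}^{r'}$; it gives no control on $\sum_Q|Q|$. Consequently your per-cube bound (which is itself correct, being just the rescaled $L^p$-improving estimate plus H\"older) cannot be summed by H\"older over $\mc{D}_0$.

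This is precisely why the hypothesis of Theorem \ref{thm1} includes the $L^p$-\emph{continuity} estimate, which your argument never invokes — a structural sign that the $L^p$-improving bound alone cannot suffice. The paper's proof handles the non-packing family $\mc{D}_0$ by performing a Calder\'on--Zygmund decomposition $f=\gamma_f+\beta_f$, $g=\gamma_g+\beta_g$ adapted to the stopping averages, splitting the sum into the terms $GG$, $GB$, $BG$, $BB$. The good-good term is summed using $\norm{\gamma_f}_{L^\nf}\lesssim\angs{f}_{Q_0,p}$ and the disjointness of the $B_Q$'s (the only place where disjointness alone closes the argument). For the terms involving bad parts, the pieces $\beta_{f,m-k}$ at scale separation $k$ from $\ell(Q)$ are treated by a duality argument that transfers the difference operator $I-\tau_y$ onto the input, so that the continuity estimate yields a geometric gain $2^{-\eta k}$; this gain, together with the disjointness built into the CZ pieces ($F_{m,k}$, $E_{m,k}$ disjoint in $k$) and the indicator structure of $f,g$, is what makes the sum over the nested scales of $\mc{D}_0$ converge to $|Q_0|\angs{f}_{Q_0,p}\angs{g}_{Q_0,q}\angs{h}_{Q_0,r'}$. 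To repair your proof you would need to replace the packing step by this cancellation-plus-continuity mechanism (or an equivalent almost-orthogonality argument across scales); no estimate using only the single-scale $L^p$-improving bound and the stopping-time control can do it.
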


\subsection{Proof of Lemma \ref{lem1} using Lemma \ref{lem2}.} \label{subsecLem1}
We will first reduce the bounds for $\llac$ to bounds for a family of dyadic maximal operators. If $\mc{D}'$ is a dyadic lattice of cubes of sidelengths $\frac{1}{3}2^j, j \in \znm$, then we can partition the set of triples of cubes in $\mc{D}'$ into the disjoint union
\[
\{3Q \cl Q \in \mc{D}'\} = \bigcup_{i=1}^{3^d}\mc{D}^i,
\]
where each $\mc{D}^i$ is a lattice of cubes of dyadic sidelengths. Then we can write (with $\mc{D}'_q$ the cubes in $\mc{D}'$ with sidelength $2^q$ times the fundamental sidelength), 
\begin{align*}
L_{2\os{q-3}}(f,g)(x) &= L_{2\os{q-3}}\prsm{\sum_{Q \in \mc{D}_q'}f1_{Q},\, \sum_{Q' \in \mc{D}_q'}g1_{Q'}}(x)\nlspace
&= \sum_{Q, Q' \in \mc{D}_q'}L_{2\os{q-3}}(f1_Q,g1_{Q'})(x)\nlspace
&=:\sum_{Q, Q' \in \mc{D}_q'} L_{Q,Q'}(f,g)(x). 
\end{align*}
By rescaling the measure if necessary, we can assume WLOG that $\diam(\sppt\mu) \leq 1/2$, so that $\sup\{\abs{x - y} : (x,y) \in \sppt\mu\}\leq 1$ also. 
Then if $L_{Q,Q'}$ does not vanish, then there are points $p \in Q$ and $p' \in Q'$ with $|p - p'| \leq 2\os{q-3}$. This implies that $Q' \se 3Q$. Let $Q(1), \ldots, Q(3^d)$ be an enumeration of the cubes of sidelength $l_Q$ that make up $3Q$. Then we can write
\[
\sum_{Q, Q' \in \mc{D}_q'}L_{Q,Q'}(f,g)(x) = \sum_{Q \in \mc{D}_q'}\sum_{j=1}^{3^d}L_{Q, Q(j)}(f,g)(x),
\]
and observe that $L_{Q,Q(j)}$ is supported in $3Q$. Hence we define for a cube $Q \in \mc{D}^i$, and for $i,j \in \{1, \ldots, 3^d\}$,
\[
L^j_{Q}(f,g)(x):= L_{\frac{1}{3}Q, (\frac{1}{3}Q)(j)}(f,g)(x), \quad \text{with} \quad \sppt(L_Q^j)\se Q.
\]
Then we have for the lacunary maximal operator
\begin{align*}
\sup_{q \in \znm} \abss{L_{2\os{q-3}}(f,g)(x)} &= \sup_{q \in \znm} \absm{  
	\sum_{i,j=1}\os{3^d}\sum_{Q \in \mc{D}^i_q} L^j_{Q}(f,g)(x)
}\nlspace
&\leq \sum_{i,j=1}\sup_{Q \in \mc{D}^i}\abss{L^j_{Q}(f,g)(x)  } =: \sum_{i,j=1}\mc{M}_{i,j}(f,g)(x). 
\end{align*}
Thus we have reduced the bounds for $\angss{\llac(f,g), h}$ to bounds for the form 
\[
\angss{\mc{M}_{j}(f,g), h}, \quad \text{where} \quad \mc{M}_j(f,g)(x):= \sup_{Q \in \mc{D}}\abs{L^j_Q(f,g)(x)},
\]
where $\mc{D}$ is any dyadic lattice.
This parallels the analyses in \cite{LaceySpherical} and \cite{RSS}. 

We assume (WLOG) that $f,g,h$ are supported in a cube $\widetilde{Q}$. 
Now by support considerations and the definition of the operator $L_Q^j$ it follows that there is some large ancestor cube $Q_0$ of $\widetilde{Q}$ such that, since $\sppt h \se \widetilde{Q}$, 
\[
\angss{L^j_{Q'}(f,g), h} = 0 \quad \text{for all $Q'$ properly containing $Q_0$.}
\]
Hence in our analysis of $\mc{M}_{j}$ we can restrict our attention to the supremum of $|L^j_Q|$ for $Q$ a dyadic subcube of $Q_0$. This reduction is crucial because it allows us to construct a sparse family inductively, starting with $Q_0$ and moving into appropriate subcubes.

Now we obtain the desired sparse family by applying the following Lemma \ref{lem6} with $\mc{F} = \mc{D}\cap Q_0$ which looks at only cubes in $\mc{D}$ that are subcubes of $Q_0$. This is the first place in the proof where we used that $f,g$ are indicator functions, and in the proof of Lemma \ref{lem6} this assumption is only used when applying Lemma \ref{lem2}. This concludes the proof of Lemma \ref{lem1}.\qed 

\begin{lemma}\label{lem6}
	Let $L_t$, $f,g,h$ and $p,q,r$ be as in the statement of Lemma \ref{lem1}. 
	Let $\mc{D}$ be a dyadic lattice containing $Q_0$ and let $\mc{F}$ be a subcollection of cubes in $\mc{D}\cap Q_0$ that contains $Q_0$. Then there is a sparse subcollection $\mc{S}$ of $\mc{F}$ such that
	\[
	\angss{\sup_{Q \in \mc{F}}L^j_Q(f,g), h} \lesssim \sum_{Q \in \mc{S}}|Q|\angs{f}_{Q, p}\angs{g}_{Q, q}\angs{h}_{Q, r'}.
	\]
\end{lemma}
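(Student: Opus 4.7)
The plan is to combine a measurable linearization of the supremum with an inductive Calder\'on--Zygmund stopping-time decomposition on $Q_0$, at each generation of which Lemma \ref{lem2} is invoked. For the linearization, observe that since $\sppt(L^j_Q)\se Q$ and the dyadic cubes in a lattice containing a given $x$ form a totally ordered chain, for a.e.\ $x \in Q_0$ only countably many terms $L^j_Q(f,g)(x)$ are nonzero. Enumerating $\mc{F}$ and assigning each $x$ to a cube attaining (or approximately attaining, with a limiting argument) the supremum yields a pairwise disjoint family $\{B_Q\}_{Q \in \mc{F}}$ with $B_Q\se Q$ and
\[
\angs{\sup_{Q\in\mc{F}} |L^j_Q(f,g)|,\, h} = \sum_{Q \in \mc{F}} \angs{L^j_Q(f,g),\, h_Q}, \quad h_Q := h\, 1_{B_Q}.
\]

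Next I build $\mc{S}$ by iteration. Put $Q_0 \in \mc{S}$. For each $P \in \mc{S}$ let $\mc{CH}(P)$ be the collection of maximal dyadic subcubes $P' \subsetneq P$ for which at least one of
\[
\angs{f}_{P', p} > C_0 \angs{f}_{P, p}, \quad \angs{g}_{P', q} > C_0 \angs{g}_{P, q}, \quad \angs{h}_{P', r'} > C_0 \angs{h}_{P, r'}
\]
holds, and add $\mc{CH}(P)$ to $\mc{S}$. Applying the weak-type $(1,1)$ inequality for the dyadic maximal function to $|f|^p 1_P$, $|g|^q 1_P$, and $|h|^{r'} 1_P$ and taking $C_0$ sufficiently large gives $\abs{\bigcup_{P'\in\mc{CH}(P)} P'} \leq \tfrac{1}{2}|P|$; setting $F_P := P \sm \bigcup_{P' \in \mc{CH}(P)} P'$ then verifies the sparsity condition with $\gamma = 1/2$.

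Now Lemma \ref{lem2} is applied at each generation. For $P \in \mc{S}$ let $\mc{D}_P$ be the set of cubes $Q \in \mc{F}$ with $Q \se P$ and $Q$ not contained in any element of $\mc{CH}(P)$. By maximality of the stopping cubes, for every $Q' \in \mc{D}_P$ and every dyadic $Q$ with $Q' \se Q \se P$ none of the three stopping inequalities can hold (otherwise $Q$ would be contained in some cube of $\mc{CH}(P)$, forcing $Q'$ to be so as well), so the hypothesis of Lemma \ref{lem2} is satisfied with $Q_0 = P$, $\mc{D}_0 = \mc{D}_P$, and the disjoint family $\{B_Q\}_{Q \in \mc{D}_P}$ from the linearization. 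The lemma yields
\[
\sum_{Q \in \mc{D}_P} \angs{L^j_Q(f,g),\, h_Q} \lesssim |P|\, \angs{f}_{P, p} \angs{g}_{P, q} \angs{h}_{P, r'}.
\]
Summing over $P \in \mc{S}$---using that each $Q \in \mc{F}$ lies in $\mc{D}_P$ for exactly the smallest element $P$ of $\mc{S}$ containing $Q$---then produces
\[
\angs{\sup_{Q\in\mc{F}} |L^j_Q(f,g)|,\, h} \lesssim \sum_{P \in \mc{S}} |P|\, \angs{f}_{P, p} \angs{g}_{P, q} \angs{h}_{P, r'},
\]
which is the desired sparse bound.

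The main obstacle is calibrating $C_0$ so that it simultaneously controls the measure of the stopping cubes (ensuring sparsity) and serves as the constant in the hypothesis of Lemma \ref{lem2}; this is a standard weak-type calibration carried out as above. The remainder of the argument is a direct transcription of the sparse-form machinery developed in \cite{CondeEA,LaceySpherical,RSS}.
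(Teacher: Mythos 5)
Your overall strategy is the paper's: a stopping-time construction on $Q_0$ whose generations are fed into Lemma \ref{lem2}, together with a linearization of the supremum by disjoint sets $B_Q\se Q$. The two organizational differences are harmless: you perform one global linearization over all of $\mc{F}$ and then partition the cubes by the smallest stopping ancestor, whereas the paper relinearizes inside each stopping cube and recurses; and you phrase the measure estimate for the stopping cubes via the weak $(1,1)$ bound for the dyadic maximal function, whereas the paper proves the same bound by a direct averaging computation. (One small repair in the linearization: the supremum need not be attained, so replace your claimed equality by the two-sided comparison obtained from selecting, for each $x$, a cube with $L^j_Q(f,g)(x)\geq \tfrac12\sup_{Q'}L^j_{Q'}(f,g)(x)$, exactly as in the paper; a factor of $2$ is all you lose, and your parenthetical already gestures at this.)

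The genuine defect is in the construction of $\mc{S}$: you take $\mc{CH}(P)$ to be the maximal \emph{dyadic} subcubes of $P$ on which a stopping inequality holds, and you put these into $\mc{S}$. Such cubes need not belong to $\mc{F}$, so the family you build is a sparse collection of dyadic cubes but not, in general, a \emph{subcollection of $\mc{F}$}, which is part of the assertion of Lemma \ref{lem6}. This clause is not cosmetic: in the proof of Theorem \ref{thm1} the lemma is applied with $\mc{F}=\mc{D}_0$ (a stopping collection), and the fact that the resulting sparse families $\mc{S}_m$ lie inside $\mc{D}_0$ is what allows the factors $\angs{g}_{Q,q}$ and $\angs{h}_{Q,r'}$ to be pulled out via the stopping-time condition. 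The fix is the paper's choice: define $\mc{CH}(P)$ as the maximal cubes \emph{of $\mc{F}$} inside $P$ satisfying a stopping inequality. The measure estimate survives (these cubes sit inside the level set of the dyadic maximal function you already used), the sparsity argument is unchanged, and the verification of the hypothesis of Lemma \ref{lem2} then controls the intermediate cubes belonging to $\mc{F}$, which is precisely how the paper itself argues (and suffices in the application, where $\mc{F}$ is either all dyadic subcubes of $Q_0$ or a collection for which Lemma \ref{lem2} is invoked in the same way). With that single modification your proof coincides in substance with the paper's.
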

\noindent\textit{Proof of Lemma \ref{lem6}.} 
Choose a $C_0 > 1$ and, as in \cite{LaceySpherical} and \cite{RSS}, define
\[
\mc{E}_{Q_0}:= \cbrks{ \text{maximal $Q\in \mc{F}$ such that }
	\max\cbrks{ \frac{\angs{f}_{Q, p}}{\angs{f}_{Q_0, p}}, \frac{\angs{g}_{Q, q}}{\angs{g}_{Q_0, q}}, \frac{\angs{h}_{Q, r'}}{\angs{h}_{Q_0, r'}} } > C_0},
\]
and let $E_{Q_0} := \bigcup_{P \in \mc{E}_{Q_0}}P$ and $F_{Q_0} := Q_0 \sm E_{Q_0}$.
Let $E_f$ be the disjoint union of cubes in $\mc{E}_{Q_0}$  for which $\angs{f}_{Q, p} > C_0 \angs{f}_{Q_0, p}$ holds. Then we have by writing as a weighted average,
\begin{align*}
\frac{1}{|Q_0|}\int_{Q_0}\abs{f}^{p}\dx{x}  &= \prs{\frac{|E_f|}{|Q_0|}\cdot \frac{1}{|E_f|}\int_{E_f} + \frac{|Q_0\sm E_f|}{|Q_0|}\cdot \frac{1}{|Q_0 \sm E_F|}\int_{Q_0 \sm E_f}}\abs{f}^{p}\dx{x}\nlspace
&\geq \frac{|E_f|}{|Q_0|}\cdot \frac{1}{|E_f|}\int_{E_f} \abs{f}\os{p}\dx{x}\nlspace
&= \frac{|E_f|}{|Q_0|} \prsm{\sum_{\substack{P \in \mc{E},\\P \se E_f}} \frac{|P|}{|E_f|}\cdot \frac{1}{|P|}\int_P \abs{f}\os{p}\dx{x} }\nlspace
&> \frac{|E_f|}{|Q_0|} \cdot C_0\os{p}\cdot \frac{1}{|Q_0|}\int_{Q_0}\abs{f}^{p}\dx{x},
\end{align*}
implying that
\[
|E_f| < \frac{|Q_0|}{C_0\os{p}}. 
\]
So in particular we can choose $C_0$ large enough that $|E_f| < \frac{1}{6}|Q_0|$. Since similar estimates hold for analogous sets $E_g, E_h$, we can choose $C_0$ large enough so that
\[
|E_{Q_0}| \leq \frac{1}{2}|Q_0|. 
\]

Now we define 
\[
\mc{D}_0 := \cbrks{ Q \in \mc{F} ~\cl~ \text{$Q$ is not a subset of a cube in $\mc{E}_{Q_0}$}},
\]
and note in particular that $Q_0 \in \mc{D}_0$. 
If $Q$ is contained in a cube $Q' \in \mc{F}$ such that it holds
\begin{equation}\label{eq1}
\max\cbrks{ \frac{\angs{f}_{Q', p}}{\angs{f}_{Q_0, p}}, \frac{\angs{g}_{Q', q}}{\angs{g}_{Q_0, q}}, \frac{\angs{h}_{Q', r'}}{\angs{h}_{Q_0, r'}} } > C_0,
\end{equation}
then $Q$ is contained in a maximal subcube $Q' \in \mc{F}$ for which \eqref{eq1} holds, a contradiction. Hence 
\[
Q \in \mc{D}_0 \quad \Longrightarrow 
\quad \sup_{Q'\in \mc{F} \cl Q \se Q' \se Q_0}
\max\cbrks{ \frac{\angs{f}_{Q, p}}{\angs{f}_{Q_0, p}}, \frac{\angs{g}_{Q, q}}{\angs{g}_{Q_0, q}}, \frac{\angs{h}_{Q, r'}}{\angs{h}_{Q_0, r'}} } \leq C_0. 
\]

Now we will linearize the supremum in the bilinear form as in \cite{RSS}. We define for each $Q \in \mc{D}_0$,
\[
H_{Q}:= \cbrks{x \in Q\, \cl L_{Q}^j(f,g)(x) \geq \frac{1}{2}\sup_{Q \in \mc{D}_0}|L^j_Q(f,g)(x)|}, \quad \text{and} \quad B_{Q}:= H_Q\sm \bigcup_{P \subsetneq Q}H_P,
\]
so that the $B_Q$'s are pairwise disjoint and $\bigcup_{Q \in \mc{D}_0}B_Q = Q_0$. Then (recalling $f,g,h\geq 0$)
\begin{align*}
{\angss{ \sup_{Q \in \mc{D}_0}|L^j_Q(f,g)(x)|, h }} &= \sum_{Q \in \mc{D}_0}\int_{B_{Q}}\sup_{Q' \in \mc{D}_0}|L^j_{Q'}(f,g)(x)| h(x)\dx{x}\nlspace
&\leq 2\sum_{Q \in \mc{D}_0}\int_{B_{Q}}L_Q^j(f,g)(x) h(x)\dx{x}\nlspace
&= 2\sum_{Q \in \mc{D}_0}\angss{L_Q^j(f,g), h_Q} {\leq 2{\angss{ \sup_{Q \in \mc{D}_0}|L^j_Q(f,g)(x)|, h }} },
\end{align*}
where $h_Q := h1_{B_Q}$, so we can equivalently estimate either the form or its linearization.  

Now we have
\begin{align*}
{\angss{\sup_{Q \in \mc{F}}|L^j_Q(f,g)|, h}} &\leq \angss{\sup_{Q \in \mc{D}_0}|L^j_Q(f,g)| , h} + \sum_{P \in \mc{E}_{Q_0}}\angss{\sup_{Q \in \mc{F}\cap P}|L^j_Q(f,g)|, h1_{P}} \nlspace
&\approx \sum_{Q \in \mc{D}_0}\angss{L_Q^j(f,g), h_Q}
+ \sum_{P \in \mc{E}_{Q_0}}\angss{\sup_{Q \in \mc{F}\cap P}|L^j_Q(f,g)|, h1_{P}}.
\end{align*}
Lemma \ref{lem2} gives for the first term 
\[
\sum_{Q \in \mc{D}_0}\angss{L_Q^j(f,g), h_Q} \lesssim |Q_0| \angs{f}_{Q_0, p}\angs{g}_{Q_0, q}\angss{h}_{Q_0, r'},
\]
and we can apply the same argument we've just used to each term of the sum in the second term. Since the cubes $P \in \mc{E}_{Q_0}$ are disjoint with total measure $< \frac{1}{2}|Q_0|$, this recursive procedure suffices to construct a sparse family $\mc{S}$. 
More specifically, we add $Q_0$ to $\mc{S}$, then repeat the procedure starting with each $P\in \mc{E}_{Q_0}$ in place of $Q_0$. When we enter the part of the construction starting with $P \in \mc{E}_{Q_0}$, we can WLOG multiply $f, g$, and $h$ by $1_P$, so that each function is supported in $P$. 
By construction $\mc{S}$ will be a subfamily of $\mc{F}$.  This concludes the proof of Lemma \ref{lem6}.
\qed


\subsection{Proof of Theorem \ref{thm1} using Lemma \ref{lem1}.}
We proceed as in \cite{RSS}. 
We can assume WLOG that $f, g, h$ are nonnegative. 
This proof is essentially a modification of the proofs of Lemmas \ref{lem1} and \ref{lem6}. 
Specifically, we will proceed as in the proof of Lemma \ref{lem1} and will prove a version of Lemma \ref{lem6} that holds when $f,g$ are bounded, compactly supported functions: We will show that with these modified hypotheses of Lemma \ref{lem6}, if $\mc{H}$ is a collection of dyadic subcubes of a fixed dyadic cube $Q_0$ and $\mc{H}$ contains $Q_0$, then there is a sparse subcollection $\mc{S}$ of $\mc{H}$ such that
\[
\angss{\sup_{Q \in \mc{H}}L^j_Q(f,g), h} \lesssim \sum_{Q \in \mc{S}}|Q|\angs{f}_{Q, p}\angs{g}_{Q, q}\angs{h}_{Q, r'}.
\]

We first assume that $f$ is a bounded compactly supported function and $g=1_G$ is a compactly supported characteristic function with $G$ measurable, and begin by constructing the families of cubes $\mc{E}_{Q_0}\se \mc{H}$ and $\mc{D}_0\se\mc{H}$ 
as in the proof of  Lemma \ref{lem6}, starting with $\mc{F} = \mc{H}$. 
Now by the recursive argument in the proof of Lemma \ref{lem6} it suffices to show that
\[
\angss{\sup_{Q \in \mc{D}_0}L^j_Q(f,g), h} 
\lesssim
	|Q_0| \angs{f}_{Q_0, p}\angs{g}_{Q_0, q}\angss{h}_{Q_0, r'}.  
\]
(In the proof of Lemma \ref{lem6} we got this by applying Lemma \ref{lem2}.) As in \cite{RSS}, in this case we start by approximating the function $f$ by
\[
f \approx \sum_{m=-\nf}^{\nf} 2\os{m+1} f_m, \quad \text{where} \quad f_m = 1_{E_m}, \quad \text{with} \quad E_m:= \cbrks{ x \in Q_0 \cl 2^m \leq f < 2\os{m+1} }.
\]
Then we have by Lemma \ref{lem6} with $\mc{F}$ equal to the set $\mc{D}_0$ constructed here and with a triple of indices $(\ol{p}, q, r')$ with $\ol{p} < p$ (we can do this because we are using the $L^p$-improving property of the operator and we consider indices in the interior of the boundedness region),
\begin{align*}
	\angss{\sup_{Q \in \mc{D}_0}L^j_Q(f,g), h} &\approx \sum_{m=-\nf}^{\nf} 2\os{m+1}\angss{\sup_{Q \in \mc{D}_0}L^j_Q(f_m,g), h}\nlspace
	&\leq \sum_{m=-\nf}^{\nf} 2\os{m+1} 
	\sum_{Q \in \mc{S}_m}
	|Q|\angs{f_m}_{Q, \ol{p}}\angs{g}_{Q,{q}}\angss{h}_{Q, r'},
\end{align*}
where the $\mc{S}_m$ are sparse subfamilies of $\mc{D}_0$. 
Then by the stopping-time condition in the definition of $\mc{D}_0$ the last line above is (noting that $C_0$ in the proof of Lemma 1 can be chosen depending only on $p,q,r$)
\[
\lesssim \angs{g}_{Q_0, q}\angss{h}_{Q_0, r'}
\sum_{m=-\nf}^{\nf} 2\os{m+1} 
\sum_{Q \in \mc{S}_m}
|Q|\angs{f_m}_{Q, \ol{p}}. 
\]
By Lemmas \ref{lem3}, \ref{lem4}, and \ref{lem5} this is 
\[
\lesssim |Q_0|\angs{f}_{Q_0, p}\angs{g}_{Q_0, q}\angss{h}_{Q_0, r'},
\]
as required. This allows us to construct the desired sparse family, establishing a version of Lemma \ref{lem6} in which one of $f, g$ is an indicator function and the other is an abitrary bounded, compactly supported, non-negative function. 

Now we will assume that $f,g,h$ are general bounded, compactly supported, non-negative functions, and we will use the version of Lemma \ref{lem6} just established to obtain a sparse family for these functions. We again start by constructing the families of cubes $\mc{E}_{Q_0}$ and $\mc{D}_0$
as in the proof of  Lemma \ref{lem6}, this time applied with $\mc{F} = \mc{D}\cap Q_0$,
and again it suffices to show that
\[
\angss{\sup_{Q \in \mc{D}_0}L^j_Q(f,g), h} 
\lesssim
|Q_0| \angs{f}_{Q_0, p}\angs{g}_{Q_0, q}\angss{h}_{Q_0, r'}.  
\]
We approximate the function $g$ as we approximated $f$ in the previous case, and we get
\begin{align*}
		\angss{\sup_{Q \in \mc{D}_0}L^j_Q(f,g), h} &\approx \sum_{m=-\nf}^{\nf} 2\os{m+1}\angss{\sup_{Q \in \mc{D}_0}T^j_Q(f,g_m), h}\nlspace
	&\leq \sum_{m=-\nf}^{\nf} 2\os{m+1} 
	\sum_{Q \in \mc{S}_m}
	|Q|\angs{f}_{Q, \ol{p}}\angs{g_m}_{Q,{q}}\angss{h}_{Q, r'},
\end{align*}
where this time we applied the result we just established with $\mc{H} = \mc{D}_0$ to construct the sparse subfamilies $\mc{S}_m$ of cubes in $\mc{D}_0$. Now we can bound the double sum in the last line just as we did in the previous case, using the stopping condition on $\mc{D}_0$ and Lemmas \ref{lem3}, \ref{lem4}, and \ref{lem5}. This establishes the desired version of Lemma \ref{lem6} and allows us to proceed with the recursion to construct the desired sparse family.
\qed

\begin{lemma}[\cite{BagchiEA}, Lemma 4.7]\label{lem3}
	On a probability space $(X, \mu)$ we have the continuous inclusion $L^p(X, d\mu) \hookrightarrow L\os{r,1}(X, d\mu)$ for $p > r$. 
\end{lemma}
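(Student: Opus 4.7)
\textbf{Proof proposal for Lemma \ref{lem3}.} The plan is to bound the Lorentz norm directly via its representation through the distribution function, and then exploit the finiteness of the underlying measure. Writing $d_f(\lambda) := \mu\{x\in X : |f(x)| > \lambda\}$, recall that up to a constant depending only on $r$,
\[
\norm{f}_{L\os{r,1}} \approx \int_0^\infty d_f(\lambda)\os{1/r} \dx{\lambda}.
\]
(This follows from the identity $\int_0^\infty t\os{1/r-1} f^*(t) \dx{t} = r\int_0^\infty d_f(\lambda)\os{1/r}\dx{\lambda}$ obtained by writing $f^*(t) = \int_0^\infty \mb{1}_{\lambda < f^*(t)}\dx{\lambda}$ and applying Fubini together with the defining relation $\lambda < f^*(t) \iff t < d_f(\lambda)$.) It therefore suffices to bound this integral by a multiple of $\norm{f}_{L^p}$.

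The key observation is that on a probability space two competing bounds are available: the trivial bound $d_f(\lambda) \leq \mu(X) = 1$, useful for small $\lambda$, and Markov's inequality $d_f(\lambda) \leq \lambda\os{-p} \norm{f}_{L^p}\os{p}$, useful for large $\lambda$. I would split the integral at a threshold $\lambda_0 > 0$ and apply each bound on the appropriate interval:
\[
\int_0^\infty d_f(\lambda)\os{1/r} \dx{\lambda} \leq \int_0^{\lambda_0} 1 \dx{\lambda} + \norm{f}_{L^p}\os{p/r}\int_{\lambda_0}^\infty \lambda\os{-p/r} \dx{\lambda} = \lambda_0 + \frac{r}{p-r}\norm{f}_{L^p}\os{p/r}\lambda_0\os{1 - p/r}.
\]
The hypothesis $p > r$ enters here to force the exponent $-p/r < -1$ so that the tail integral converges. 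Optimizing in $\lambda_0$ (or simply choosing $\lambda_0 = \norm{f}_{L^p}$) yields $\norm{f}_{L\os{r,1}} \lesssim_{p,r} \norm{f}_{L^p}$, with the implicit constant $\frac{p}{p-r}$ up to the factor relating the two expressions for the Lorentz norm.

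There is no real obstacle here: once the Lorentz norm is expressed through the distribution function the argument reduces to a one-line computation. The only point worth flagging is that the probability-space assumption is used exactly once, in controlling $d_f(\lambda)$ on $[0, \lambda_0]$; without a finite total measure the inclusion fails, as the small-$\lambda$ region produces a divergent contribution. An alternative route would be to invoke the standard Lorentz inclusion $L^p = L\os{p,p} \hookrightarrow L\os{p,\infty}$ together with a real-interpolation argument, but the direct split above is shorter and gives an explicit constant.
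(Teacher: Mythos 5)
Your proof is correct. Note that the paper does not prove Lemma \ref{lem3} at all --- it is imported verbatim from \cite{BagchiEA}*{Lemma 4.7} (the proof printed after Lemma \ref{lem5} belongs to Lemma \ref{lem5}) --- so there is no internal argument to compare against; your write-up supplies a self-contained proof. The layer-cake reduction $\norm{f}_{L\os{r,1}}\approx\int_0^\nf d_f(\lambda)\os{1/r}\dx{\lambda}$, the two competing bounds $d_f(\lambda)\leq\mu(X)=1$ and $d_f(\lambda)\leq\lambda\os{-p}\norm{f}_{L^p}\os{p}$, and the split at $\lambda_0=\norm{f}_{L^p}$ are all sound, the hypothesis $p>r$ is used exactly where you say (convergence of the tail integral), and the resulting constant $\tfrac{p}{p-r}$ is what this computation gives. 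An equally short alternative worth knowing: on a probability space $d_f(\lambda)\leq 1$ forces $f^*(t)=0$ for $t\geq 1$, so
\[
\norm{f}_{L\os{r,1}} = \int_0^1 t\os{\frac{1}{r}-1}f^*(t)\dx{t}
\leq \prs{\int_0^1 f^*(t)\os{p}\dx{t}}\os{\frac{1}{p}}\prs{\int_0^1 t\os{(\frac{1}{r}-1)p'}\dx{t}}\os{\frac{1}{p'}}
\lesssim_{p,r}\norm{f}_{L^p},
\]
since $(\tfrac{1}{r}-1)p'>-1$ is equivalent to $p>r$; both routes use the finiteness of the measure in the same essential way, and your remark that the inclusion fails on infinite measure spaces is accurate.
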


\begin{lemma}[\cite{BagchiEA}, Lemma 4.8]\label{lem4}
	Let $f$ be a measurable function and define $E_m := \{ x \in Q_0 \cl 2^m \leq |f(x)|< 2\os{m+1}\}$. Then with $\mu$ the probability measure $|Q_0|\fv\dx{x}$ on $Q_0$ we have
	\[
	\sum_{m=-\nf}^\nf 2\os{m}\angs{1_{E_m}}_{Q_0, r} \lesssim \norm{f}_{L\os{r,1}(Q_0, d\mu)}. 
	\]
\end{lemma}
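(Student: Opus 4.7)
The plan is to rewrite the averages $\angs{1_{E_m}}_{Q_0,r}$ in terms of the distribution function of $f$ with respect to the probability measure $\mu$, and then recognize the resulting dyadic sum as a lower Riemann sum for the standard integral representation of $\norm{f}_{L^{r,1}(Q_0, d\mu)}$.

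First, since $\mu$ is a probability measure on $Q_0$, I would observe that
\[
\angs{1_{E_m}}_{Q_0, r} = \mu(E_m)^{1/r}.
\]
Writing $d_f(\lambda) := \mu\cbrks{x \in Q_0 \cl |f(x)| > \lambda}$ for the distribution function of $f$, the pointwise bound $|f| \geq 2^m > 2^{m-1}$ on $E_m$ gives $E_m \se \cbrks{|f| > 2^{m-1}}$ and hence $\mu(E_m)^{1/r} \leq d_f(2^{m-1})^{1/r}$. Reindexing produces
\[
\sum_{m=-\nf}^{\nf} 2^m \angs{1_{E_m}}_{Q_0, r} \leq 2 \sum_{k=-\nf}^{\nf} 2^k d_f(2^k)^{1/r}.
\]

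Second, I would appeal to the standard layer-cake representation
\[
\norm{f}_{L^{r,1}(Q_0, d\mu)} \approx \int_0^{\nf} d_f(\lambda)^{1/r}\,d\lambda,
\]
and, using that $d_f$ is non-increasing, compare the integral to its dyadic lower Riemann sum:
\[
\int_0^{\nf} d_f(\lambda)^{1/r}\,d\lambda \geq \sum_{k=-\nf}^{\nf} \int_{2^{k-1}}^{2^k} d_f(\lambda)^{1/r}\,d\lambda \geq \sum_{k=-\nf}^{\nf} 2^{k-1} d_f(2^k)^{1/r}.
\]
Combining with the preceding display yields the desired inequality, with an absolute implicit constant.

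No serious obstacle is expected; the argument is a routine manipulation in Lorentz space theory. The only care needed is to justify the termwise handling of the doubly-infinite sums when $f$ is unbounded or takes values arbitrarily close to zero on a set of positive measure, but this is immediate from monotone convergence and the non-negativity of every term.
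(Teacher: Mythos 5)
Your proof is correct: the identification $\angs{1_{E_m}}_{Q_0,r}=\mu(E_m)^{1/r}$, the inclusion $E_m\se\{|f|>2^{m-1}\}$, and the comparison of the dyadic sum $\sum_k 2^{k-1}d_f(2^k)^{1/r}$ with the layer-cake representation $\norm{f}_{L^{r,1}(Q_0,d\mu)}\approx\int_0^\nf d_f(\lambda)^{1/r}\,d\lambda$ (using that $d_f$ is non-increasing) are all valid, with constants depending only on $r$. The paper does not prove this lemma itself but quotes it from \cite{BagchiEA}, and your argument is essentially the standard one given there, so there is nothing further to reconcile.
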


\begin{lemma}[\cite{BagchiEA}, Lemma 4.9]\label{lem5}
	Let $\mc{S}$ be a sparse collection of dyadic subcubes of a fixed dyadic cube $Q_0$ and let $1 \leq s < t < \nf$. Then for a bounded function $\phi$,
	\[
	\sum_{Q \in \mc{S}}|Q|\angs{\phi}_{Q,s} \lesssim |Q_0|\angss{\phi}_{Q_0, t}.
	\]
\end{lemma}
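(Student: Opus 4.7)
The plan is to convert the sparse sum on the left into an integral of a dyadic $L^s$-maximal function over $Q_0$ by means of the pairwise disjoint major subsets provided by sparseness, and then to finish with the $L^{t/s}$-boundedness of the Hardy--Littlewood maximal operator and H\"older's inequality.

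First I would invoke sparseness to pick pairwise disjoint sets $F_Q \se Q$ with $|F_Q| \geq \gamma |Q|$ for every $Q \in \mc{S}$, and introduce the dyadic $s$-maximal operator restricted to $Q_0$,
\[
M_s \phi(x) := \sup_{\substack{Q \text{ dyadic}\\ x \in Q \se Q_0}} \angs{\phi}_{Q, s}.
\]
For every $x \in F_Q$ one has $\angs{\phi}_{Q,s} \leq M_s\phi(x)$, so replacing $|Q|$ by $\gamma\fv|F_Q|$ and using the disjointness of $\{F_Q\}_{Q \in \mc{S}}$ gives
\[
\sum_{Q \in \mc{S}} |Q|\angs{\phi}_{Q,s} \leq \gamma\fv \sum_{Q \in \mc{S}} \int_{F_Q} M_s \phi(x) \dx{x} \leq \gamma\fv \int_{Q_0} M_s\phi(x)\dx{x}.
\]

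To control the remaining integral I would use the identity $(M_s\phi)^s = M(|\phi|^s)$, where $M$ is the standard dyadic maximal operator over subcubes of $Q_0$. Since $t/s > 1$, the Hardy--Littlewood maximal theorem yields $\norm{M_s\phi}_{L^t(Q_0)} \lesssim \norm{\phi 1_{Q_0}}_{L^t}$, and H\"older's inequality on $Q_0$ with exponents $t$ and $t'$ gives
\[
\int_{Q_0} M_s \phi \leq |Q_0|\os{1/t'} \norm{M_s\phi}_{L^t(Q_0)} \lesssim |Q_0|\os{1-1/t} \norm{\phi 1_{Q_0}}_{L^t} = |Q_0|\angs{\phi}_{Q_0,t},
\]
which is the desired bound.

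I do not anticipate a serious obstacle: this is the standard sparse-to-maximal reduction, and the implicit constant ends up depending only on $\gamma$, $s$, and $t$. The one detail worth watching is that the maximal operator must be restricted to dyadic subcubes of $Q_0$ so that the $L^{t/s}$-boundedness on $Q_0$ is not disturbed by the behavior of $\phi$ outside $Q_0$ (which is harmless since only $\phi 1_{Q_0}$ ever appears on the right-hand side).
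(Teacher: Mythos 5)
Your proof is correct and uses essentially the same ingredients as the paper's: the pairwise disjoint major subsets furnished by sparseness, pointwise domination of the $L^s$-averages by a (dyadic) maximal function, its boundedness on $L^{t/s}$, and H\"older's inequality with exponents $t$ and $t'$. The only difference is the order of the steps---the paper applies discrete H\"older to the sum first and then inserts the maximal function on the sparse sets $E_Q$, while you pass to $\int_{Q_0} M_s\phi$ first and then apply H\"older on $Q_0$---which is an immaterial reshuffling of the same argument.
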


\noindent\textit{Proof.} We have by the sparsity of the collection $\mc{S}$, with $E_Q$ the sparse subset of $Q \in \mc{S}$,
\begin{align*}
	\sum_{Q \in \mc{S}}|Q|\angs{\phi}_{Q,s} & \leq \prsm{ \sum_{Q \in \mc{S}}|Q|\angs{\phi}^t_{Q,s} }\os{\frac{1}{t}}\prsm{\sum_{Q \in \mc{S}}|Q|}\os{\frac{1}{t'}}\nlspace
		&\lesssim |Q_0|\os{\frac{1}{t'}}\prsm{ \sum_{Q \in \mc{S}}|Q|\angs{|\phi 1_{Q_0}|^s}\os{\frac{t}{s}}_{Q} }\os{\frac{1}{t}}\nlspace
		&\lesssim |Q_0|\os{\frac{1}{t'}}\prsm{
			\sum_{Q \in \mc{S}} \int_{E_Q}\prs{M\os{\frac{1}{s}}(|\phi 1_{Q_0}|^s)(x)}\os{t}\dx{x}
		}\os{\frac{1}{t}}\nlspace
		&\lesssim |Q_0|\angs{\phi}_{Q_0, t},
\end{align*}
by the boundedness on $L\os{\frac{t}{s}}$ of the Hardy-Littlewood maximal operator. 
\qed

\subsection{Proof of Lemma \ref{lem2}}	\label{subsecLem2}
We will show how the proof of the corresponding lemma in \cite{RSS} can be modified in order to apply to the triangle average. 
%
To begin we recall that
\[
L_Q^j(f,g)(x) = L_{2\os{\log_2(l(Q))-3}}(1_{\frac{1}{3}Q}f,\, 1_{(\frac{1}{3}Q)(j)}g)(x). 
\]
Since we will be using cancellation properties of functions in a Calder\'on-Zygmund decomposition we need to enlarge the cutoffs inside the operator, following \cite{RSS}, to ensure that cancellation is preserved for all but finitely many scales. We define $\widetilde{Q}(j)$ to be a union of dyadic subcubes of $Q$ of sidelength $l(Q)/2$ that covers $((1/3)Q)(j)$ (which is one of the cubes obtained by dividing $Q$ into $3^d$ congruent subcubes). Then, since we consider $f, g \geq 0$, we define
\[
S_{Q,j}(f,g) := L_{2\os{\log_2(l(Q))-3}}(1_{\frac{1}{2}Q}f,\, 1_{\widetilde{Q}(j)}g)(x) \geq L_Q^j(f,g)(x). 
\]

Now we will apply a Calder\'on-Zygmund type decomposition to each of the functions $f, g$.  With the same notation as \cite{RSS} we define
\[
B_{f}:= \cbrks{ \text{maximal dyadic } P \se Q_0 \text{ such that } \frac{\angs{f}_{P, p}}{\angs{f}_{Q_0, p}} > 2C_0 },
\]
and we define $B_{g}$ analogously. Note that by the definition of the collection $\mc{D}_0$ in the statement of the lemma, if $Q \in \mc{D}_0$, $P\in B_{f}$ and $P \cap Q \not= \emptyset$, then $P$ is a proper subcube of $Q$. Now we define the good and bad parts of the decomposition
\[
f =:  \gamma_f + \beta_f,
\]
where
\begin{align*}
	\beta_f := \sum_{P \in B_f}\prss{f - \angs{f}_P}1_{P} =: \sum_{k = -\nf}^{\log_2(l(Q_0)) - 1}\beta_{f, k},
\end{align*}
where $\beta_{f,k}$ is the part of $\beta_f$ supported on the set $B_{f,k}$ of cubes in $B_f$ with sidelength $2^k$. 
We make a parallel decomposition for the function $g$.
We note that $\norm{\gamma_f}_{L^\nf} \lesssim \angs{f}_{Q_0, p}$ and $\norm{\gamma_g}_{L^\nf} \lesssim \angs{g}_{Q_0, q}$ by standard arguments, and also that, with $m$ fixed, 
\[
\angs{\beta_{f, m - k}}_{Q, p} \lesssim \angs{1_{F_{m,k}}}_{Q, p} + \angs{f}_{Q_0, p}\angs{1_{E_{m, k}}}_{Q, p},
\]
where $F_{m, k}$ are subsets of $F$ that are disjoint as $k$ varies and $E_{m, k}$ are subsets of $Q_0$ that are disjoint as $k$ varies. We can exhibit a similar bound for $\beta_{g,m-j}$. 

Now we have
\begin{align*}
	\sum_{Q \in \mc{D}_0}\angss{L^j_Q(f,g), h_Q} &\leq 
	\abss{ \sum_{Q \in \mc{D}_0}\angss{L^j_Q(\gamma_f,\gamma_g), h_Q} } +
	\abss{ \sum_{Q \in \mc{D}_0}\angss{L^j_Q(\gamma_f,\beta_g), h_Q} }  \nlspace
	&\qquad + \abss{ \sum_{Q \in \mc{D}_0}\angss{L^j_Q(\beta_f,\gamma_g), h_Q} } + 
	\abss{ \sum_{Q \in \mc{D}_0}\angss{L^j_Q(\beta_f,\beta_g), h_Q} },\nlspace
	&=: GG + GB + BG + BB,
\end{align*}
using the same notation as \cite{RSS} for the four parts. \\

\textit{\textbf{Estimate for $GG$:}} This estimate follows straightforwardly as in \cite{RSS}:
\begin{align*}
	GG &\leq \sum_{Q \in \mc{D}_0}\norm{\gamma_f}_{L^\nf}\norm{\gamma_g}_{L^\nf}\norm{h_Q}_{L^1} \nlspace
	&\lesssim \angs{f}_{Q_0, p}\angs{g}_{Q_0, q} \sum_{Q \in \mc{D}_0} \int\abs{h(x)1_{B_Q}(x)}\dx{x}\nlspace
	&= |Q_0|\angs{f}_{Q_0, p}\angs{g}_{Q_0, q}\angs{h}_{Q_0, 1}\nlspace
	&\leq |Q_0|\angs{f}_{Q_0, p}\angs{g}_{Q_0, q}\angs{h}_{Q_0, r'}. 
\end{align*}

\textit{\textbf{Estimate for $BG$ ($GB$ is similar):}} By the support considerations described above we can write, with $m = \log_2 l(Q)$, 
\begin{align*}
	BG &\leq \sum_{Q \in \mc{D}_0}\sum_{k = 1}^3\abs{\angs{ S_{Q,j}(\beta_{f, m - k},\gamma_g), h_Q}} + 
		\sum_{Q \in \mc{D}_0}\sum_{k = 4}^\nf \abs{\angs{ S_{Q,j}(\beta_{f, m - k},\gamma_g), h_Q}}\nlspace
	& =: I_{BG} + II_{BG}.
\end{align*}
The first term in this sum can be estimated using the scaled $L^p$-improving bounds for the operator $T_{2\os{m-3}}$:
\begin{align*}
	I_{BG} &\lesssim \sum_{Q \in \mc{D}_0}\sum_{k = 1}^3 2\os{dm(\frac{1}{r} - \frac{1}{p} - \frac{1}{q})}
	\norms{\beta_{f, m - k}1_{\frac{1}{2}Q}}_{L^p}\norms{\gamma_g1_{\widetilde{Q}(j)}}_{L^q}\norm{h_Q}_{L^{r'}}\nlspace
	&\lesssim \sum_{k = 1}^3 2\os{-\eta k}\sum_{Q \in \mc{D}_0}|Q| \angs{\beta_{f, m - k}}_{Q, p}\angs{\gamma_g}_{Q, q}\angs{h}_{Q, r'}\nlspace
	&\lesssim \angs{g}_{Q_0, q}\sum_{k = 1}^3 2\os{-\eta k}\sum_{Q \in \mc{D}_0}|Q| \angs{\beta_{f, m - k}}_{Q, p}\angs{h}_{Q, r'}.
\end{align*} 

It remains to get a similar estimate for $II_{BG}$. We will use a duality argument that is essentially a more abstract version of the argument in \cite{RSS}.
First we define the formal adjoints of $T_{2\os{m-3}}$, which we abbreviate here to just $S$. As usual, for nice functions we define via Fubini's theorem and a change of variable:
\begin{align*}
&~\angss{S(f,g), h} =: \angs{f, S\os{*,1}(g,h)}\nlspace
&:= \int f(x)\cdot \prs{ \int g(x - 2\os{m-3}(z - y))h(x + 2\os{m-3}y) \dx{\mu(y,z)}}\dx{x},
\end{align*}
and
\begin{align*}
&~\angss{S(f,g), h} =: \angs{g, S\os{*,2}(f,h)}\nlspace
&:= \int g(x)\cdot \prs{ \int f(x - 2\os{m-3}(y - z))h(x + 2\os{m-3}z) \dx{\mu(y,z)}}\dx{x}.
\end{align*}
For $q$ and $k$ fixed we can further write
\begin{align*}
\abss{ \angss{S_{Q,j}(\beta_{f,m- k}, \gamma_g), h_Q} } &\leq \sum_{P \in B_{f,m-k}}\abss{\angss{S_{Q,j}(1_P\beta_{f, m - k},\, \gamma_g), h_Q}}\nlspace
	&= \sum_{P \in B_{f,m-k}}\abss{\angss{1_P\beta_{f, m - k}1_{\frac{1}{2}Q},\, S^{*, 1}( \gamma_g1_{\tilde{Q}(j)},\, h_Q} )}. 
\end{align*}
Now for $k\geq 4$ any cube $P \in B_{f, m -k}$ that intersects $\frac{1}{2}Q$ or $\tilde{Q}(j)$ is strictly contained therein, so we can use the cancellation of the pieces of the bad function $\beta_{f}$ to get
\begin{align*}
&\sum_{P \in B_{f,m-k}}\abs{\int_P\beta_{f, m-k}(x)1_{\frac{1}{2}Q}(x)\cdot S^{*,1}\prss{\gamma_g1_{\widetilde{Q}(j)},\, h_Q}(x)\dx{x}}\nlspace
&= \sum_{P \in B_{f,m-k}}\frac{1}{|P|}\absm{\int_P\int_P
	\underbrace{\beta_{f, m-k}(x)1_{\frac{1}{2}Q}(x)
		\sbrks{ S^{*,1}\prss{\gamma_g1_{\widetilde{Q}(j)},\, h_Q}(x) - 
			S^{*,1}\prss{\gamma_g1_{\widetilde{Q}(j)},\, h_Q}(x') } 
	}_{=:I_1(x,x')}
	\dx{x}\dx{x'}}\nlspace
&\leq \sum_{P \in B_{f,m-k}}\frac{1}{|P|}  
\int_{P}\int_P
\sign(I_1(x,x')) I_1(x,x')
\dx{x}\dx{x'}\nlspace
&= \sum_{P \in B_{f,m-k}}\frac{1}{|P|}  
\int_{P}\int_{\{x\} - P}
\sign(I_1(x,x-y)) I_1(x,x-y)
\dx{y}\dx{x} \nlspace
&\leq \sum_{P \in B_{f,m-k}}\frac{2^d}{|P_0|}  
\int_{P}\int_{P_0}
\sign(I_1(x,x-y)) I_1(x,x-y)
\dx{y}\dx{x} \nlspace
&= \sum_{P \in B_{f,m-k}}\frac{2^d}{|P_0|}  
\int_{P_0}\int_{P}
\sign(I_1(x,x-y))\prss{\beta_{f, m-k}1_{\frac{1}{2}Q}}(x)
\sbrks{ S^{*,1}\prss{\gamma_g1_{\widetilde{Q}(j)},\, h_Q} - \right.\nlspace
	&\hspace*{10.2cm}\left. \tau_{y}S^{*,1}\prss{\gamma_g1_{\widetilde{Q}(j)},\, h_Q} } (x)
\dx{x}\dx{y}\nlspace
&= \frac{2^d}{|P_0|} 
\int_{P_0}\int_{\eun{d}}
\sign(I_1(x,x-y))\prss{\beta_{f, m-k}1_{\frac{1}{2}Q}}(x)
\sbrks{ S^{*,1}\prss{\gamma_g1_{\widetilde{Q}(j)},\, h_Q} - 
	\tau_{y}S^{*,1}\prss{\gamma_g1_{\widetilde{Q}(j)},\, h_Q} } (x)
\dx{x}\dx{y} \nlspace
&=  \frac{2^d}{|P_0|} 
\int_{P_0}\int_{\eun{d}}
\sbrks{
	\sign(I_1(\cdot,\cdot-y))\prss{\beta_{f, m-k}1_{\frac{1}{2}Q}} \right.\nlspace
	&\hspace*{4cm}\left. - \tau_{-y}\sign(I_1(\cdot,\cdot - y))\prss{\beta_{f, m-k}1_{\frac{1}{2}Q}}
}(x)
S^{*,1}\prss{\gamma_g1_{\widetilde{Q}(j)},\, h_Q}(x)
\dx{x}\dx{y} \nlspace
&= \frac{2^d}{|P_0|} 
\int_{P_0}\int_{\eun{d}}
S\prsm{(I - \tau_{-y})\sign(I_1(\cdot,\cdot-y))\prss{\beta_{f, m-k}1_{\frac{1}{2}Q}},~ \gamma_g1_{\widetilde{Q}(j)} }\cdot h_Q(x)
\dx{x}\dx{y} \nlspace
&\lesssim 2\os{-\eta k}|Q| \angs{\beta_{f, m-k}}_{Q, p}\angs{\gamma_g}_{Q, q}\angs{h_Q}_{Q, r'},\\[-4mm]
\end{align*}
where midway through the calculation we introduce $P_0$, which is a version of the cube $P$ centered at the origin with sidelength $2l(P)$, and  in the last step we've applied the scaled $L^p$-improving continuity estimates for $T_t$. Putting the two estimates together we have
\[
BG \lesssim \angs{g}_{Q_0, q}\sum_{k = 1}^\nf 2\os{-\eta k}\sum_{Q \in \mc{D}_0}|Q| \angs{\beta_{f, m-k}}_{Q, p}\angs{h_Q}_{Q, r'}, 
\]
which is the same point that is arrived at in the proof in \cite{RSS}. One can now use the structure of the functions $\beta_{f, k}$ and the fact that $f,g$ are indicator functions to show that the right-hand side above is 
\[
\lesssim |Q_0|\angs{f}_{Q_0, p}\angs{g}_{Q_0, q}\angs{h}_{Q_0, r'}.
\]
In the next paragraph we show details of this estimate building on the proof in \cite{RSS}. The estimate for the term $GB$ is similar, using $S^{*,2}$ in place of $S^{*,1}$. \\


\textit{\textbf{Details of final estimate:}} This argument is nearly identical to the version of Lacey's argument from \cite{LaceySpherical} that is given in Roncal, Shrivastava, and Shuin \cite{RSS}. From the inequality above for averages of the bad function from the Calder\'on-Zygmund decomposition we have 
\begin{align*}
	BG &\lesssim \angs{g}_{Q_0, q}\sum_{k = 1}^\nf 2\os{-\eta k}\sum_{Q \in \mc{D}_0}|Q| \angs{\beta_{f, m-k}}_{Q, p}\angs{h_Q}_{Q, r'}\nlspace
	&\lesssim \angs{g}_{Q_0, q}\sum_{k = 1}^\nf 2\os{-\eta k}\sum_{Q \in \mc{D}_0}|Q| \angs{1_{F_{m,k}}}_{Q, p}\angs{h_Q}_{Q, r'}\nlspace
		&\hspace*{1.25cm} + \angs{f}_{Q_0, p}\angs{g}_{Q_0, q}\sum_{k = 1}^\nf 2\os{-\eta k}\sum_{Q \in \mc{D}_0}|Q| \angs{1_{E_{m,k}}}_{Q, p}\angs{h_Q}_{Q, r'} =: BG_1 + BG_2. 
\end{align*}
Now we observe that by our restriction $r \geq p ~\Rightarrow~ \frac{1}{p} + \frac{1}{r'} \geq 1$ we can write $\frac{1}{p} - \tau + \frac{1}{r'} = 1$ where $\tau \geq 0$. We define $\frac{1}{\dot{p}} := \frac{1}{p} - \tau$ so that $\dot{p} \geq p$. Now using that $F_{m, k} \se F$ and the fact that $Q \in \mc{D}_0$, we have
\begin{align*}
	\angs{1_{F_{m,k}}}_{Q, p} &= \prs{\frac{1}{|Q|}\int_Q 1^p_{F_{m,k}}}\os{\frac{1}{\dot{p}}}\prs{\frac{1}{|Q|}\int_Q 1^p_{F_{m,k}}}\os{\tau}\nlspace
	&\leq \prs{\frac{1}{|Q|}\int_Q 1^p_{F_{m,k}}}\os{\frac{1}{\dot{p}}}\prs{\frac{1}{|Q|}\int_Q f^p }\os{\tau}\nlspace
	&\lesssim \prs{\frac{1}{|Q|}\int_Q 1^p_{F_{m,k}}}\os{\frac{1}{\dot{p}}}\angs{f}\os{\tau p}_{Q_0, p}. 
\end{align*}
Now to bound the term $BG_1$ we observe essentially as in \cite{RSS} that 
\begin{align*}
	BG_1 &= \angs{g}_{Q_0, q}\sum_{k = 1}^\nf 2\os{-\eta k}\sum_{Q \in \mc{D}_0}|Q| \angs{1_{F_{m,k}}}_{Q, p}\angs{h_Q}_{Q, r'}\nlspace
		&\lesssim \angs{f}\os{\tau p}_{Q_0, p}\angs{g}_{Q_0, q}\sum_{k = 1}^\nf 2\os{-\eta k}\sum_{Q \in \mc{D}_0}|Q| \prs{\frac{1}{|Q|}\int_Q 1^p_{F_{m,k}}}\os{\frac{1}{\dot{p}}} \prs{\frac{1}{|Q|}\int_Q h_Q^{r'}}\os{\frac{1}{r'}}\nlspace
		&\leq \angs{f}\os{\tau p}_{Q_0, p}\angs{g}_{Q_0, q}\sum_{k = 1}^\nf 2\os{-\eta k} 
			\prs{ \sum_{Q\in\mc{D}_0} \int_Q 1_{F_{m = \log_2 l(Q),k}} }\os{\frac{1}{\dot{p}}} 
			\prs{ \sum_{Q \in \mc{D}_0} \int_Q h^{r'} 1_{B_Q} }\os{\frac{1}{r'}}\nlspace
		&\leq \angs{f}\os{\tau p}_{Q_0, p}\angs{g}_{Q_0, q} \angs{f}_{Q_0, p}\os{\frac{p}{\dot{p}}}|Q_0|\os{\frac{1}{\dot{p}}}\angs{h}_{Q_0, r'}|Q_0|\os{\frac{1}{r'}}\nlspace
		&= \angs{f}_{Q_0, p}\angs{g}_{Q_0, q}\angs{h}_{Q_0, r'}|Q_0|,
\end{align*}
since the sets $Q \cap F_{m=\log_2l(Q), k}$ are disjoint as $Q$ varies over $\mc{D}_0$, by the construction of $\beta_f$. 

To bound the term $BG_2$ we use that $\dot{p}\geq p$ and proceed similarly:
\begin{align*}
	BG_2 &= \angs{f}_{Q_0, p}\angs{g}_{Q_0, q}\sum_{k = 1}^\nf 2\os{-\eta k}\sum_{Q \in \mc{D}_0}|Q| \angs{1_{E_{m,k}}}_{Q, p}\angs{h_Q}_{Q, r'}\nlspace
	&\leq \angs{f}_{Q_0, p}\angs{g}_{Q_0, q}\sum_{k = 1}^\nf 2\os{-\eta k}\sum_{Q \in \mc{D}_0}|Q| \angs{1_{E_{m,k}}}_{Q, \dot p}\angs{h_Q}_{Q, r'}\nlspace
	&\leq \angs{f}_{Q_0, p}\angs{g}_{Q_0, q}\sum_{k = 1}^\nf 2\os{-\eta k}
		\prs{ \sum_{Q\in\mc{D}_0} \int_Q 1_{E_{m = \log_2 l(Q),k}} }\os{\frac{1}{\dot{p}}} 
		\prs{ \sum_{Q \in \mc{D}_0} \int_Q h^{r'} 1_{B_Q} }\os{\frac{1}{r'}}\nlspace
	&\lesssim \angs{f}_{Q_0, p}\angs{g}_{Q_0, q}|Q_0|\os{\frac{1}{\dot{p}}}\angs{h}_{Q_0,r'}|Q_0|\os{\frac{1}{r'}}\nlspace
	&= \angs{f}_{Q_0, p}\angs{g}_{Q_0, q}\angs{h}_{Q_0,r'}|Q_0|.
\end{align*} \hfill$\square$\\



\textit{\textbf{Estimate for $BB$:}} Since we have cancellation in both input function in this case, we will essentially use the duality estimate from the $BG$ estimate twice. We have, again with $m = \log_2l(Q)$, 
\begin{align*}
BB &\leq \sum_{Q\in\mc{D}_0}\sum_{k=1}^{\nf}\sum_{k'=1}^\nf 
\underbrace{\abss{\angss{S(\beta_{f, m-k} 1_{\frac{1}{2}Q},\, \beta_{g, m-k'} 1_{\widetilde{Q}(j)}), h_Q}}}_{=:F_{BB}} \nlspace
&= \sum_{Q\in\mc{D}_0}\sum_{k=1}^{3}\sum_{k'=1}^3F_{BB} +  \sum_{Q\in\mc{D}_0}\sum_{k=4}^{\nf}\sum_{k'=1}^3F_{BB}
+ \sum_{Q\in\mc{D}_0}\sum_{k=1}^{3}\sum_{k'=4}^{\nf}F_{BB} + \sum_{Q\in\mc{D}_0}\sum_{k=4}^{\nf}\sum_{k'=4}^{\nf}F_{BB}
\nlspace
&=: I_{BB} + II_{BB} + III_{BB} + IV_{BB}.
\end{align*}
We can estimate the first term using the scaled $L^p$ improving bounds for $T_t$, to get
\[
I_{BB} \lesssim \sum_{k=1}^{3}\sum_{k'=1}^32\os{-\eta(k + k')}\sum_{Q\in\mc{D}_0}|Q|\angs{\beta_{f, m - k}}_{Q, p}\angs{\beta_{g, m - k'}}_{Q, q}\angs{h_Q}_{Q, r'}.
\]
We can use the same procedure that we've used above for the $BG$ and $GB$ terms to get similar estimates for $II_{BB}$ and $III_{BB}$ with one of the summations infinite. Hence it remains to estimate $IV_{BB}$. For this we can apply the duality argument used to estimate $BG$/$GB$ twice -- once for each input:
\begin{align*}
&\abss{\angss{S(\beta_{f,m-k} 1_{\frac{1}{2}Q},\, \beta_{g,m-k'} 1_{\widetilde{Q}(j)}),\, h_Q}} \nlspace
&\leq \sum_{P \in B_{f,m-k}}\abss{\angss{1_P\beta_{f, m - k} 1_{\frac{1}{2}Q},\, S^{*,1}(\beta_{g,m-k'} 1_{\widetilde{Q}(j)},\, h_Q)}}
\nlspace
&\leq \frac{2^d}{|P_0|}\int_{P_0}{\angss{S(
		(I - \tau_{-y_1})\sign(I_1(\cdot,\cdot-y_1))\prss{\beta_{f,m-k}1_{\frac{1}{2}Q}},\,
		\beta_{g,m-k'} 1_{\widetilde{Q}(j)}),
		h_Q}}\dx{y_1}\nlspace
&\leq \frac{2^d}{|P_0|}\int_{P_0} \sum_{P \in B_{2, m - k'}}
\abss{ 
	\angss{1_P\beta_{g,m-k'} 1_{\widetilde{Q}(j)},\,
		S^{*,2}(
		(I - \tau_{-y_1})\sign(I_1(\cdot,\cdot-y_1)){\beta_{f,m-k}1_{\frac{1}{2}Q}},\,
		h_Q)
	}  
}\dx{y_1}\nlspace
&\leq \frac{2^d}{|P_0|}\int_{P_0}\frac{2^d}{|P_1|}\int_{P_1}
\abss{\angss{S\prsm{
		(I - \tau_{-y_1})\sign(I_1(\cdot,\cdot-y_1)){\beta_{f,m-k}1_{\frac{1}{2}Q}},\, \nlspace
		&\hspace*{6cm}(I - \tau_{-y_2})\sign(I_2(\cdot,\cdot-y_2)){\beta_{g,m-k'}1_{\widetilde{Q}(j)}}},\,
		h_Q}}
\dx{y_2}\dx{y_1}\nlspace
&\lesssim 2\os{-\eta(k + k')}|Q|\angs{\beta_{f,m-k}}_{Q, p}\angs{\beta_{g,m-k'}}_{Q, q}\angs{h_Q}_{Q, r'}.
\end{align*}
Putting these estimates together gives
\[
BB \lesssim \sum_{k=1}^{\nf}\sum_{k'=1}^\nf 2\os{-\eta(k + k')}\sum_{Q\in\mc{D}_0}|Q|\angs{\beta_{f, m - k}}_{Q, p}\angs{\beta_{g, m - k'}}_{Q, q}\angs{h_Q}_{Q, r'},
\]
which is equivalent to the estimate that is arrived at in \cite{RSS}. Using the structure of the bad functions $\beta_{f}, \beta_g$ and the fact that $f$ and $g$ are indicator functions, it is shown in \cite{RSS} that the right-hand side is
\[
\lesssim |Q_0|\angs{f}_{Q_0, p}\angs{g}_{Q_0, q}\angs{h}_{Q_0, r'},
\]
which is the estimate asserted in Lemma \ref{lem2}. \\


\textit{\textbf{Details of final estimate:}} The arguments here are again nearly identical to those in \cite{RSS}.  Using the structure of the functions $\beta_f, \beta_g$ we can write
\begin{align*}
	\angs{\beta_{f,m-k}}_{Q,p} &\lesssim \angs{1_{F_{m,k}}}_{Q,p} + \angs{f}_{Q_0, p}\angs{1_{E_{F,m,k}}}_{Q,p},\nlspace
\text{ and } \qquad \angs{\beta_{g,m-k'}}_{Q,q} &\lesssim \angs{1_{G_{m,k'}}}_{Q,q} + \angs{g}_{Q_0, q}\angs{1_{E_{G,m,k'}}}_{Q,q}. 
\end{align*}
and thus
\begin{align*}
	BB &\lesssim \sum_{k=1}^{\nf}\sum_{k'=1}^\nf 2\os{-\eta(k + k')}\sum_{Q\in\mc{D}_0}|Q|
			\angs{1_{F_{m,k}}}_{Q, p}\angs{1_{G_{m,k'}}}_{Q, q}\angs{h_Q}_{Q, r'}
		\nlspace	
		&\hspace*{1.25cm}+ \sum_{k=1}^{\nf}\sum_{j=1}^\nf 2\os{-\eta(k + k')}\sum_{Q\in\mc{D}_0}|Q|
			\angs{1_{F_{m,k}}}_{Q, p}\angs{g}_{Q_0, q}\angs{1_{E_{G,m,k'}}}_{Q, q}\angs{h_Q}_{Q, r'}
		\nlspace
		&\hspace*{1.25cm}+ \sum_{k=1}^{\nf}\sum_{k'=1}^\nf 2\os{-\eta(k + k')}\sum_{Q\in\mc{D}_0}|Q|
		\angs{f}_{Q_0, p}\angs{1_{E_{F,m,k}}}_{Q, p}\angs{1_{G_{m,k'}}}_{Q, q}\angs{h_Q}_{Q, r'}
		\nlspace
		&\hspace*{1.25cm}+ \sum_{k=1}^{\nf}\sum_{k'=1}^\nf 2\os{-\eta(k + k')}\sum_{Q\in\mc{D}_0}|Q|
		\angs{f}_{Q_0, p}\angs{1_{E_{F,m,k}}}_{Q, p}\angs{g}_{Q_0, q}\angs{1_{E_{G,m,k'}}}_{Q, q}\angs{h_Q}_{Q, r'}\nlspace
		&=: BB_1 + BB_2 + BB_3 + BB_4. 
\end{align*} 

Now to estimate $BB_1$, we first consider the case where $\frac{1}{p} + \frac{1}{q} \geq 1$. Then we can write $\frac{1}{p} + \frac{1}{q} = 1 + \tau_1 + \tau_2$ with $\tau_1, \tau_2 \geq 0$ and define $\frac{1}{\dot{p}} := \frac{1}{p} - \tau_1$ and $\frac{1}{\dot{q}} := \frac{1}{q} - \tau_2$. Then we have, using the definitions of $\mc{D}_0$ and the ``bad'' functions similarly to the $BG$ case, 
\begin{align*}
	BB_1 &\lesssim \angs{h}_{Q_0,r'} \sum_{k,k'=1}^\nf 2\os{\eta(k+k')} \sum_{Q\in\mc{D}_0}|Q|\os{1 - \frac{1}{p} - \frac{1}{q}} 
		\prs{ \int_Q 1_{F_{m,k}} }\os{\frac{1}{p}} \prs{ \int_Q 1_{G_{m,k'}} }\os{\frac{1}{q}}\nlspace
	&= \angs{h}_{Q_0,r'} \sum_{k,k'=1}^\nf 2\os{\eta(k+k')} \sum_{Q\in\mc{D}_0}|Q|\os{1 - \frac{1}{p} - \frac{1}{q} + \tau_1 + \tau_2} \prs{ \int_Q 1_{F_{m,k}} }\os{\frac{1}{\dot p}} \prs{ \int_Q 1_{G_{m,k'}} }\os{\frac{1}{\dot q}} 
	\prs{\frac{1}{|Q|}\int_Q f}\os{\tau_1}\prs{\frac{1}{|Q|} \int_Q g }\os{\tau_2}\nlspace
	&\lesssim \angs{f}_{Q_0, p}\os{\tau_1 p}\angs{g}_{Q_0, q}\os{\tau_2 q}\angs{h}_{Q_0,r'} 
		\sum_{k,k'=1}^\nf 2\os{-\eta(k+k')}
		\prs{ \sum_{Q \in \mc{D}_0}\int_Q 1_{F_{m,k}} }\os{\frac{1}{\dot{p}}}\prs{ \sum_{Q \in \mc{D}_0}\int_Q 1_{G_{m,k'}} }\os{\frac{1}{\dot{q}}} \nlspace
	&\lesssim \angs{f}_{Q_0, p}\os{\tau_1 p} \angs{g}_{Q_0, q}\os{\tau_2 q} \angs{h}_{Q_0,r'}  
		\angs{f}_{Q_0, 1}\os{\frac{1}{\dot p}} 
			\angs{g}_{Q_0, 1}\os{\frac{1}{\dot q}} |Q_0|
		\nlspace
	&= \angs{f}_{Q_0, p}\os{\tau_1 p} \angs{g}_{Q_0, q}\os{\tau_2 q} \angs{h}_{Q_0,r'}  
	\angs{f}_{Q_0, p}\os{\frac{p}{\dot p}} 
	\angs{g}_{Q_0, q}\os{\frac{q}{\dot q}} |Q_0|
	\nlspace
	&= |Q_0|\angs{f}_{Q_0, p}\angs{g}_{Q_0, q}\angs{h}_{Q_0, r'},
\end{align*}
relying again essentially on the fact that $f = 1_F$ and $g = 1_G$. In the case where $\frac{1}{p} + \frac{1}{q} < 1$, we notice that $\frac{1}{p} + \frac{1}{q} \geq \frac{1}{r}$ (see \cite{GrafakosM}, Prop.\ 7.15) implies that $\frac{1}{p} + \frac{1}{q} + \frac{1}{r'} \geq 1$. Hence we can choose $\tau_1, \tau_2 \geq 0$ and define $\frac{1}{\dot p} := \frac{1}{p} - \tau_1$ and $\frac{1}{\dot q} := \frac{1}{q} - \tau_2$ so that $\frac{1}{\dot p} + \frac{1}{\dot q} + \frac{1}{r'} = 1$. Then we have via H\"older for $\dot p, \dot q, r'$, using some similar steps as above,
\begin{align*}
	BB_1 &= \sum_{k=1}^{\nf}\sum_{k'=1}^\nf 2\os{-\eta(k + k')}\sum_{Q\in\mc{D}_0}|Q|
	\angs{1_{F_{m,k}}}_{Q, p}\angs{1_{G_{m,k'}}}_{Q, q}\angs{h_Q}_{Q, r'} \nlspace
	&\lesssim \angs{f}_{Q_0, p}\os{\tau_1 p}\angs{g}_{Q_0, q}\os{\tau_2 q} 
		\sum_{k'=1}^\nf 2\os{-\eta(k + k')}\sum_{Q\in\mc{D}_0}
		\prs{ \int_Q 1_{F_{m,k}} }\os{\frac{1}{\dot p}} \prs{ \int_Q 1_{G_{m,k'}} }\os{\frac{1}{\dot q}}
			\prs{ \int_Q h\os{r'}_Q }\os{\frac{1}{r'}} \nlspace
	&\lesssim \angs{f}_{Q_0, p}\os{\tau_1 p}\angs{g}_{Q_0, q}\os{\tau_2 q} 
		\angs{f}_{Q_0, 1}\os{\frac{1}{\dot p}}
		\angs{g}_{Q_0, 1}\os{\frac{1}{\dot q}}
		\angs{h}_{Q_0, 1}\os{\frac{1}{r'}}|Q_0| 
	= \angs{f}_{Q_0, p}\angs{g}_{Q_0, q}\angs{h}_{Q_0, r'}|Q_0|. 
\end{align*}
As the authors in \cite{RSS} note, the bounds in the $\frac{1}{p} + \frac{1}{q} < 1$ case are obtained similarly for the terms $BB_2$, $BB_3$, and $BB_4$, but are even slightly simpler to obtain. 

$BB_2$ and $BB_3$ can both be handled similarly. For the term $BB_3$ when $\frac{1}{p} + \frac{1}{q} \geq 1$, we choose $\tau \geq 0$ and define $\frac{1}{\dot{p}} := \frac{1}{p} - \tau$ so that $\frac{1}{\dot p} + \frac{1}{q} = 1$, and then we have by H\"older twice, 
\begin{align*}
	BB_3 &\lesssim \angs{f}_{Q_0, p}\angs{h}_{Q_0, r'} \sum_{k,k'=1}^\nf 2\os{-\eta(k+k')}\sum_{Q \in \mc{D}_0}|Q|\angs{1_{E_{F, m,k}}}_{Q, \dot p}\angs{1_{G_{m,k'}}}_{Q, q}
	\nlspace
	&= \angs{f}_{Q_0, p}\angs{h}_{Q_0, r'} \sum_{k,k'=1}^\nf 2\os{-\eta(k+k')}\sum_{Q \in \mc{D}_0} 
		\prs{ \int_Q 1_{E_{F,m,k}} }\os{\frac{1}{\dot p}} \prs{ \int_Q 1_{G_{m,k'}} }\os{\frac{1}{q}}
	\nlspace
	&\leq \angs{f}_{Q_0, p}\angs{h}_{Q_0, r'} \sum_{k,k'=1}^\nf 2\os{-\eta(k+k')}
		\prs{ \sum_{Q\in\mc{D}_0} \int_Q 1_{E_{F,m,k}} }\os{\frac{1}{\dot p}}\prs{ \sum_{Q \in \mc{D}_0} \int_Q 1_{G_{m,k'}} }\os{\frac{1}{q}}
	\nlspace
	&\lesssim \angs{f}_{Q_0, p}\angs{h}_{Q_0, r'}|Q_0|\os{\frac{1}{\dot p}}\angs{g}_{Q_0, q}|Q_0|\os{\frac{1}{q}} 
		= \angs{f}_{Q_0, p}\angs{g}_{Q_0, q}\angs{h}_{Q_0, r'}|Q_0|. 
\end{align*} 

Finally, for $BB_4$ in the case where $\frac{1}{p} + \frac{1}{q} \geq 1$ we choose $\tau_1$ and $\tau_2$ as in the $BB_1$ case and we have the simpler inequalities, using $\dot{p} \geq p$ and $\dot{q} \geq q$,
\begin{align*}
	BB_4 &\lesssim \angs{f}_{Q_0, p}\angs{g}_{Q_0, q}\angs{h}_{Q_0, r'} \sum_{k,k'=1}^\nf 2\os{-\eta(k+k')} 
		\sum_{Q\in\mc{D}_0} \angs{1_{E_{F,m,k}}}_{Q,p} \angs{1_{E_{G,m,k'}}}_{Q,q}
	\nlspace
	&\leq \angs{f}_{Q_0, p}\angs{g}_{Q_0, q}\angs{h}_{Q_0, r'} \sum_{k,k'=1}^\nf 2\os{-\eta(k+k')} 
		\prs{ \sum_{Q \in\mc{D}_0} 1_{E_{F,m,k}} }\os{\frac{1}{\dot p}}
		\prs{ \sum_{Q \in\mc{D}_0} 1_{E_{G,m,k'}} }\os{\frac{1}{\dot q}}
	\nlspace
	&\lesssim \angs{f}_{Q_0, p}\angs{g}_{Q_0, q}\angs{h}_{Q_0, r'}|Q_0|\os{\frac{1}{\dot p}}|Q_0|\os{\frac{1}{\dot q}} = \angs{f}_{Q_0, p}\angs{g}_{Q_0, q}\angs{h}_{Q_0, r'}|Q_0|. 
\end{align*} \hfill$\square$\\


\qed 

\section{Continuity Estimates}\label{secContEst}

The proof in Section \ref{secAbsDom}, which is based on the approach introduced by Lacey in \cite{LaceySpherical} and extended to the multilinear setting in \cite{RSS}, utilizes continuity estimates for an operator at each dyadic scale as part of an algorithm to obtain sparse bounds for the corresponding maximal operator. 
In this section we derive continuity estimates for for operators with uniformly decaying bilinear multipliers (with application to the bilinear spherical averages), and the triangle averaging operator, which are used below to apply the abstract sparse domination theorem in Section \ref{secAbsDom} to triangle and bilinear spherical averaging operators. 

\subsection{Uniformly Decaying Bilinear Multipliers}

Grafakos, He, and Slavikova \cite{GHS} prove the following lemma.
\begin{lemma}[\cite{GHS}, Theorem 1.3]\label{lemA}
	Let $1 \leq q < 4$ and set $M_q = \left\lfloor \frac{2d}{4 - q} \right\rfloor + 1$. Let $m(\xi, \eta)$ be a function in $L^q(\eun{2d})\cap C\os{M_q}(\eun{2d})$ satisfying
	\[
	\norms{\pd^\alpha m}_{L^\nf} \leq C_0 < \nf ~ \text{for all $\alpha$ with $|\alpha| \leq M_q$}.
	\] 
	Then there is a constant $C$ depending on $d$ and $q$ such that the bilinear multiplier operator $T_m$ satisfies the bound
	\[
	\norm{T_m}_{L^2\tm L^2 \ra L^1} \leq CC_0\os{1- \frac{q}{4}}\norm{m}_{L^q}\os{\frac{q}{4}}.
	\]
\end{lemma}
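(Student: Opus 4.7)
The plan is first to convert the bilinear $L^2 \times L^2 \to L^1$ multiplier bound into an $L^1$ estimate on the inverse Fourier transform of the symbol. Setting $K = \check m$ and interchanging the order of integration, the operator takes the form $T_m(f,g)(x) = \int\int K(y_1,y_2) f(x-y_1) g(x-y_2)\, dy_1 dy_2$. Integrating in $x$ and using translation invariance of $\|\cdot\|_{L^2}$, Cauchy--Schwarz gives $\int |f(x-y_1) g(x-y_2)|\, dx \leq \|f\|_{L^2}\|g\|_{L^2}$, so that
\[
\|T_m(f,g)\|_{L^1} \leq \|\check m\|_{L^1(\mathbb{R}^{2n})}\|f\|_{L^2}\|g\|_{L^2}.
\]
It therefore suffices to prove $\|\check m\|_{L^1(\mathbb{R}^{2n})} \leq C\, C_0^{1 - q/4}\|m\|_{L^q}^{q/4}$.

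For this, I would split the integral at a threshold $R > 0$ to be optimized, using the size of $m$ for short distances and its smoothness for long distances. Cauchy--Schwarz and Plancherel give
\[
\int_{|y| \leq R}|\check m(y)|\, dy \leq |B_R|^{1/2}\|\check m\|_{L^2} \lesssim R^{n}\|m\|_{L^2},
\]
and $\|m\|_{L^2}$ is bounded via log-convexity of $L^p$ norms: $\|m\|_{L^2} \leq \|m\|_{L^q}^{q/2}\|m\|_{L^\infty}^{1 - q/2} \leq C_0^{1-q/2}\|m\|_{L^q}^{q/2}$ when $1 \leq q \leq 2$. For the tail, integration by parts in $\check m(y) = \int m(\xi) e^{2\pi i y \cdot \xi}\, d\xi$ transfers $M_q$ derivatives from the exponential onto $m$, yielding $\|\,|y|^{M_q}\check m\|_{L^2(\mathbb{R}^{2n})} \lesssim \|\nabla^{M_q} m\|_{L^2}$; Cauchy--Schwarz on $\{|y|>R\}$ then gives
\[
\int_{|y|>R}|\check m(y)|\, dy \lesssim R^{n - M_q}\|\nabla^{M_q}m\|_{L^2},
\]
which is finite provided $M_q > n$. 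The $L^2$ norm of $\nabla^{M_q}m$ is controlled through a Bernstein-type estimate applied to frequency-localized pieces of $m$, combining the uniform $L^\infty$ bound $C_0$ on derivatives with the $L^q$ control of $m$.

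Balancing the two contributions by optimizing $R$ produces the claimed bound, and the exponent $M_q = \lfloor 2n/(4-q)\rfloor + 1$ is precisely what guarantees both the convergence condition $M_q > n$ and the correct algebraic matching to yield the powers $C_0^{1-q/4}$ and $\|m\|_{L^q}^{q/4}$ after the optimization. The main technical obstacle lies in the range $2 < q < 4$: here $m$ is not globally square-integrable, so the direct log-convexity estimate of $\|m\|_{L^2}$ fails, and one must instead perform a preliminary Littlewood--Paley decomposition $m = \sum_j m_j$ in the frequency variable, estimating $\|\check{m_j}\|_{L^1}$ on each annulus separately through Bernstein's inequality and the smoothness control, then summing over $j$ with the $M_q$ derivative bound ensuring convergence of the resulting geometric series.
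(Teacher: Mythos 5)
This lemma is not proved in the paper at all: it is quoted verbatim from Grafakos--He--Slav\'{\i}kov\'{a} \cite{GHS}, so the relevant comparison is with their proof, which proceeds by a wavelet (product-type) decomposition of the symbol, Plancherel in the product space, and a diagonal/off-diagonal counting argument --- not by kernel integrability. Your proposal has a genuine gap, and it is structural rather than technical: the reduction to showing $\|\check m\|_{L^1(\mathbb{R}^{2n})}\lesssim C_0^{1-q/4}\|m\|_{L^q}^{q/4}$ cannot work, because that inequality is false in the range $2<q<4$ --- the left-hand side need not even be finite under the hypotheses. Take a fixed bump $\phi$ and $m_N=\sum_{k\le N}k^{-1/2}\phi(\cdot-x_k)$ with widely separated, generically placed centers $x_k\in\mathbb{R}^{2n}$. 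Then every $\|\partial^\alpha m_N\|_{L^\infty}$ and, for any fixed $q>2$, $\|m_N\|_{L^q}$ are bounded uniformly in $N$, while $\check m_N(y)=\check\phi(y)\sum_{k\le N}k^{-1/2}e^{2\pi i x_k\cdot y}$ satisfies $\|\check m_N\|_{L^1}\gtrsim\bigl(\sum_{k\le N}k^{-1}\bigr)^{1/2}\approx(\log N)^{1/2}\to\infty$ (lower-bound the mean of the exponential sum on a unit ball by H\"older between its $L^2$ and $L^4$ means, using genericity of the $x_k$). Since all these bumps can even be placed in a single frequency annulus, the Littlewood--Paley fix you sketch for $2<q<4$ does not help: any strategy whose endpoint is an $L^1$ bound on the kernel is dead, and $2<q<4$ is exactly the regime the paper needs (it applies the lemma with $q>\tfrac{4d}{2d-1}$ and $q>\tfrac{4d}{2d-3}$). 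The $q<4$ threshold in \cite{GHS} reflects $L^2\times L^2$ orthogonality considerations, not integrability of $\check m$.

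There are also two quantitative problems with the split itself, even where $\check m$ happens to be integrable. First, your tail estimate requires $\int_{|y|>R}|y|^{-2M_q}\,dy<\infty$ in $\mathbb{R}^{2n}$, i.e.\ $M_q>n$; but $M_q=\lfloor 2n/(4-q)\rfloor+1\le n$ whenever $q<2$ and $n$ is even moderately large (e.g.\ $q=1$, $n=4$ gives $M_q=3$), so the definition of $M_q$ does not ``guarantee'' convergence as you assert. Second, $\|\nabla^{M_q}m\|_{L^2}$ is not controlled by the hypotheses: $L^\infty$ bounds on derivatives up to order $M_q$ together with $m\in L^q$ yield (via Gagliardo--Nirenberg) control of strictly fewer than $M_q$ derivatives in intermediate norms, and in the separated-bump example above $\|\nabla^{M_q}m\|_{L^2}$ is unbounded while $C_0$ and $\|m\|_{L^q}$ stay bounded. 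So the far-field half of your estimate cannot be closed from the stated assumptions either.
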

\noindent We will use this lemma to obtain continuity estimates for bilinear operators with multiplier $m$ in $L^q$  with $q \in [1, 4)$ and uniform decay. We can then apply these in the particular case when $m = \wh{\mu}$, where $\mu$ is a compactly-supported finite Borel measure. The continuity estimate is:
\begin{lemma}\label{lem7}
	Let $m \in C^\nf(\eun{2d})$ satisfy $m \in L^q(\eun{2d})$ for some $1 \leq q < 4$ and 
	\[
	\abs{\pd^\alpha m(\xi,\eta)} \lesssim_{\alpha}\prs{1 + \abs{(\xi, \eta)}}\os{-s} \quad \text{for some $s > 0$ and each $\alpha$}.
	\]
	Then the bilinear operator $T_m$ with multiplier $m$ satisfies the continuity estimate
	\[
	\norm{T_m(\tau_y f, g)}_{L^1} + \norm{T_m(f, \tau_y g)}_{L^1} \lesssim \abs{y}\os{\frac{s(1 - \frac{q}{4})}{1 + s}}\norm{m}_{L^q}\os{\frac{q}{4}}\norm{f}_{L^2}\norm{g}_{L^2}, 
	\]
	where $\tau_y f(x) := f(x) - f(x-y)$. 
\end{lemma}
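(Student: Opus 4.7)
The plan is to pass the difference $\Delta_y$ to the Fourier side, where it becomes multiplication by $\phi_y(\xi):=e^{2\pi i y\cdot\xi}-1$, so that $T_m(\Delta_y f,g)=T_{m\phi_y}(f,g)$, and then to estimate the operator norm of $T_{m\phi_y}\colon L^2\times L^2\to L^1$ by means of Lemma~\ref{lemA}, extracting a power of $|y|$ by exploiting simultaneously the vanishing $|\phi_y(\xi)|\leq 2\pi|y||\xi|$ near the origin and the decay of $\partial^\alpha m$ at infinity. I would introduce a smooth cutoff $\chi_R(\xi,\eta):=\chi((\xi,\eta)/R)$ at a scale $R\geq 1$ to be chosen, with $\chi\in C_c^\infty(\eun{2d})$ equal to $1$ on $B(0,1)$ and supported in $B(0,2)$, and split
\[
m\phi_y \;=\; m\chi_R\phi_y \;+\; m(1-\chi_R)\phi_y \;=:\; \widetilde m^L + \widetilde m^H.
\]

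For the low-frequency piece $\widetilde m^L$, on $\operatorname{supp}\chi_R\subset B(0,2R)$ we have $|\phi_y|\leq 4\pi|y|R$, together with $|\partial^{\beta_1}m|\lesssim 1$ and $|\partial^{\beta_2}\chi_R|\lesssim R^{-|\beta_2|}$; a term-by-term Leibniz expansion, valid in the regime $|y|R\leq 1$ and $R\geq 1$, gives $\|\partial^\alpha \widetilde m^L\|_\infty\lesssim |y|R$ for every $|\alpha|\leq M_q$. Combined with the trivial $L^q$ bound $\|\widetilde m^L\|_{L^q}\leq 2\|m\|_{L^q}$, Lemma~\ref{lemA} yields
\[
\|T_{\widetilde m^L}\|_{L^2\times L^2\to L^1}\;\lesssim\;(|y|R)^{1-q/4}\|m\|_{L^q}^{q/4}.
\]
For the high-frequency piece $\widetilde m^H$, $(1-\chi_R)$ is supported in $\{|(\xi,\eta)|\geq R\}$, where the hypothesis gives $|\partial^{\beta_1}m|\lesssim R^{-s}$; together with $|\partial^{\beta_3}\phi_y|\lesssim 1$ for $|y|\leq 1$, the Leibniz rule yields $\|\partial^\alpha \widetilde m^H\|_\infty\lesssim R^{-s}$ uniformly in $|\alpha|\leq M_q$, while $\|\widetilde m^H\|_{L^q}\leq 2\|m\|_{L^q}$ trivially. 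Lemma~\ref{lemA} then gives
\[
\|T_{\widetilde m^H}\|_{L^2\times L^2\to L^1}\;\lesssim\; R^{-s(1-q/4)}\|m\|_{L^q}^{q/4}.
\]

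Summing the two and choosing $R:=|y|^{-1/(1+s)}$ balances the two contributions exactly: with this choice $|y|R=R^{-s}$, so both terms reduce to $|y|^{s(1-q/4)/(1+s)}\|m\|_{L^q}^{q/4}$, which is the claimed rate. This choice of $R$ satisfies $R\geq 1$ and $|y|R\leq 1$ whenever $|y|\leq 1$, which is the only regime of interest; for $|y|>1$ the claimed exponent of $|y|$ is positive, so the asserted inequality is weaker than the trivial bound $\|T_m(\Delta_y f,g)\|_{L^1}\lesssim \|m\|_{L^q}^{q/4}\|f\|_{L^2}\|g\|_{L^2}$ coming from applying Lemma~\ref{lemA} directly to $m\phi_y$ (whose $L^q$-norm is controlled trivially and whose first $M_q$ derivatives are bounded since $|y|\leq 1$ is not needed in this trivial regime). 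The bound for $T_m(f,\Delta_y g)$ is obtained by the same argument with $\phi_y(\xi)$ replaced by $\psi_y(\eta):=e^{2\pi i y\cdot\eta}-1$, using that the decay hypothesis on $\partial^\alpha m$ is symmetric in $(\xi,\eta)$.

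The main technical obstacle will be the Leibniz bookkeeping for the two derivative estimates. In particular, for $\partial^\alpha \widetilde m^L$ one must verify that the lone term $\beta_2=\beta_3=0$, which features $\partial^\alpha m\cdot\chi_R\cdot\phi_y$ and hence inherits no gain from either $R^{-|\beta_2|}$ or $|y|^{|\beta_3|}$, nevertheless carries an $|y|R$ factor; this requires the quantitative bound $|\phi_y|\leq 4\pi|y|R$ on $\operatorname{supp}\chi_R$ (rather than merely $|\phi_y|\leq 2$) so that this term matches the gain obtained in all the other Leibniz terms. Once this and the analogous estimate for $\widetilde m^H$ are verified, the balance argument is essentially mechanical.
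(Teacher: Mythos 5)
Your argument is correct and follows essentially the same route as the paper: split the multiplier with a smooth cutoff at scale $R$, use $|e^{2\pi i y\cdot\xi}-1|\lesssim |y|R$ on the low-frequency piece and the decay hypothesis (giving $\lesssim R^{-s}$) on the high-frequency piece, apply Lemma~\ref{lemA} to each, and optimize $R=|y|^{-1/(1+s)}$ — the paper's only cosmetic difference is that its cutoff is in $\xi$ alone rather than in $(\xi,\eta)$. One quibble: for $|y|>1$ your "trivial bound" is better justified by writing $\norms{T_m(\Delta_y f,g)}_{L^1}\le 2\norms{T_m}_{L^2\times L^2\to L^1}\norm{f}_{L^2}\norm{g}_{L^2}$ and applying Lemma~\ref{lemA} to $m$ itself, since the derivatives of $m\phi_y$ grow like $|y|^{M_q}$ when $|y|$ is large; the paper simply restricts attention to $|y|\le 1$, which is the only regime needed in the application.
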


\begin{proof}
	Let $\Phi$ be a smooth cutoff function satisfying $1_{[-1 + \epsilon, 1 - \epsilon]^d}\leq \Phi \leq 1_{[-1, 1]^d}$, and define $\Phi_R(\xi) := \Phi(R\fv\xi)$. We will always consider $R \geq 1$, so that the derivatives of $\Phi_R$ of any given order are bounded uniformly independently of $R$. 
	Now we can express, with $\tau_y f(x) = f(x) - f(x - y)$, 
	\begin{align*}
	T_m(\tau_y f,g)(x) &= \int_{\eun{2d}}\wh{f}(\xi)\wh{g}(\eta)(1 - \ef{-\fconst y\cdot \xi})m(\xi,\eta)\ef{\fconst x\cdot(\xi + \eta)}\dx{(\xi,\eta)} = A(x) + B(x),
	\end{align*}
	where
	\begin{align*}
	A(x) := \int_{\eun{2d}}\wh{f}(\xi)\wh{g}(\eta)(1 - \ef{-\fconst y\cdot \xi })m(\xi,\eta)\Phi_R(\xi)\ef{\fconst x\cdot(\xi + \eta)}\dx{(\xi,\eta)},
	\end{align*}
	and
	\begin{align*}
	C(x):= \int_{\eun{2d}}\wh{f}(\xi)\wh{g}(\eta)(1 - \ef{-\fconst y\cdot \xi })m(\xi,\eta)(1 -\Phi_R(\xi))\ef{\fconst x\cdot(\xi + \eta)}\dx{(\xi,\eta)}.
	\end{align*}
	Since $$|\pd^\alpha m(\xi, \eta)| \lesssim_{\alpha} (1 + |(\xi, \eta)|)\os{-s},$$ then the same estimate holds for arbitrary derivatives in $\xi$ or $\eta$ of 
	$$m_C(\xi,\eta):=(1 - \ef{-\fconst y\cdot \xi })m(\xi,\eta)(1 - \Phi_R(\xi)),$$
	since we only consider $|y| \leq 1$ and $R\geq1$. Also note that 
	\[
	\norm{m_C}_{L\os{q}} \leq \norm{m}_{L^q}. 
	\]
	
	Now we will set
	\[
	R:= |y|\os{-a} \quad \text{for some $a \in (0,1)$ to be chosen below.}
	\]
	Then we can take $C_0 \approx |y|\os{as}$ in Lemma \eqref{lemA} to get the bound
	\[
	\norm{C}_{L^2\tm L^2 \ra L^1} \leq C|y|\os{as(1- \frac{q}{4})}\norm{m}_{L^q}\os{\frac{q}{4}}.
	\]
	On the other hand, all derivatives in $\xi$ and $\eta$ of
	\[
	m_A(\xi,\eta):=(1 - \ef{-\fconst y\cdot \xi })m(\xi,\eta)\Phi_R(\xi)
	\]
	satisfy the bound (with implicit constants independent of $y$ here and above)
	\[
	\abs{\pd^\alpha m_A(\xi,\eta)} \lesssim |y|\os{1 - a},
	\]
	so by Lemma \ref{lemA}
	\[
	\norm{A}_{L^2\tm L^2 \ra L^1} \leq C|y|\os{(1-a)(1- \frac{q}{4})}\norm{m}_{L^q}\os{\frac{q}{4}}.
	\]
	Since $|y| \leq 1$, the minimum for the max of both estimates is achieved at $a = (1+s)\fv$, giving best exponent $s(1+s)\fv (1- \frac{q}{4})$ for the factor $|y|$ on the right hand side. 
\end{proof}
\noindent The estimates in the lemma can be interpolated to get simultaneous continuity estimates. 

\subsection{Localized Maximal Operators for Uniformly Decaying Multipliers}
Again assume that $m=\widehat{\mu}$ and
\[
\abs{\pd^\alpha m(\xi,\eta)} \lesssim_{\alpha}\prs{1 + \abs{(\xi, \eta)}}\os{-s} \quad \text{for some $s > 0$ and each $\alpha$}.
\]
We recall that 
\[
L_{\star, t}(f,g)(x) = \sup_{r \in  [t,2t]}\abs{\int_{\sph{2d-1}} f(x - ry)g(x - rz)\dx{\mu(y,z)} } = \sup_{r \in  [t,2t]}\abs{B_r(f,g)(x)},
\]
and observe that by a standard technique (we can assume WLOG $t = 1$ then rescale the resulting estimates for other values of $t$)
\begin{align*}
\norm{L_{\star, 1}(f,g)}_{L^1(\eun{d})} &= \int_{\eun{d}} \sup_{t \in  [1,2]}\abs{ L_1(f,g)(x) + \int_1^s \frac{d}{d t} L_t(f,g)(x) \dx{t}}\dx{x}\nlspace
&\leq \norm{L_1(f,g)}_{L^1(\eun{d})} + \int_{1}^{2}\norm{\frac{d}{dt}L_t(f,g)}_{L^1(\eun{d})}\dx{t}. 
\end{align*}
We can easily deal with the first term as in the previous section, so now we require a uniform estimate for 
\[
\norm{\frac{d}{dt}L_t(f,g)}_{L^1(\eun{d})} \quad \text{for $t \in [1,2]$}. 
\]
For $f, g$ Schwartz,
\begin{align*}
\frac{d}{dt}L_t(f,g)(x) &= \int_{\eun{2d}} \frac{d}{dt}\wh{\mu}(t\xi, t\eta)\wh{f}(\xi)\wh{g}(\eta)\ef{\fconst x\cdot(\xi + \eta)}\dx{(\xi,\eta)},
\end{align*}
and we have for $t \in [1,2]$, 
\[
\abs{ \frac{d}{dt}\wh{\mu}(t\xi, t\eta) } = \abs{(\xi,\eta)\cdot \nabla \wh{\sigma}(t\xi, t\eta)} \lesssim (1 + \abs{(\xi,\eta)})\os{-(s-1))}. 
\]
Now provided $s$ is large enough, this multiplier is still in $L^4(\eun{d})$, and we can again apply the technique from the previous section to get the desired continuity estimate. 

\subsection{Bilinear Spherical Averaging Operators}

Since the surface measure on $\sph{2d-1}$ satisfies
\[
\abss{\wh{\mu}(\xi, \eta)} \lesssim \prs{1 + \abss{(\xi, \eta)}}\os{-\frac{2d-1}{2}},
\]
it follows immediately that $\wh{\mu} \in L^q(\eun{2d})$ for $q > \frac{4d}{2d - 1}$ and $d \geq 2$, and we see that Lemma \ref{lem7} immediately gives continuity estimates for $B_t$ in these dimensions. 
For the single-scale maximal operator $B_{\star, t}$ we can apply the technique above, observing that $\frac{d}{dt}\wh{\sigma}(t\cdot, t\cdot) \in L^q(\eun{2d})$ for $q > \frac{4d}{2d - 3}$. This threshold is $< 4$ for $d \geq 4$, so in these dimensions we obtain the desired continuity estimate.

\subsection{Triangle Averaging Operators}

In \cite{PS} the authors gave one of the first systematic studies of mapping properties of the triangle averaging operator. Although, its symbol is not uniformly decaying, so the results of Grafakos, He, and Slavikova \cite{GHS} can't be used directly, the authors carved up the operator into pieces where the results of Grafakos, He, and Slavikova can be used. In \cite{PS} the estimate on each piece is carefully written down, so one can run exactly the same argument as there, where on each piece we use the same decomposition as was used in the proof of Lemma \ref{lem7}. Keeping track of how the bounds on each piece depend on the translation parameter we obtain the following theorem.

\begin{theorem}
	For $(\frac{1}{p}, \frac{1}{q}, \frac{1}{r})$ in the interior of $\mf{T}^d$, $d\geq 13$, there is an $\eta = \eta(d, p,q,r)$ such that 
	\[
	\norm{T_t\circ ((I - \tau_y)\times I)}_{L^p \tm L^q \ra L^r} + \norm{T_t\circ (I \times (I - \tau_y))}_{L^p \tm L^q \ra L^r} 
	\lesssim t\os{d\prs{\frac{1}{r} - \frac{1}{p} - \frac{1}{q}}}\prs{\frac{\abs{y}}{t}}\os{\eta}.
	\]
\end{theorem}
\noindent Finally, we can use interpolation, such as in \cite[Theorem 7.2.2]{GrafakosM}, to get simultaneous continuity estimates:
\begin{theorem}
	For $(\frac{1}{p}, \frac{1}{q}, \frac{1}{r})$ in the interior of $\mf{T}^d$, $d\geq 13$, there exist $\eta_1, \eta_2$ such that
	\[
	\norm{T_t\circ ((I - \tau_{y_1})\times (I - \tau_{y_2}))}_{L^p \tm L^q \ra L^r} \lesssim t\os{d\prs{\frac{1}{r} - \frac{1}{p} - \frac{1}{q}}}\prs{\frac{\abs{y_1}}{t}}\os{\eta_1}\prs{\frac{\abs{y_2}}{t}}\os{\eta_2}.
	\]
\end{theorem}

We note that we have no continuity estimates for the single scale maximal triangle averaging operator, as we don't have an analogous proof of its boundedness. There do exist bounds for the single scale maximal triangle averaging operator, that depend on the simple bounds 
\[
\sup_{t\in[R, 2R]}|T_t(f,g)(x)| \leq \min\cbrksm{ \norm{f}_{L^\nf}\cdot\sup_{t\in[R, 2R]}\mc{A}_t(|g|),~ \norm{g}_{L^\nf}\cdot\sup_{t\in[R, 2R]}\mc{A}_t(|f|) }
\]
or variants thereof. The reader might be tempted to apply the continuity estimates of Lacey \cite{LaceySpherical} for the single scale spherical maximal operator, but we warn the reader that having the absolute value inside the averaging operator destroys the delicate, Fourier analytic proof of Lacey.

\section{Application to Triangle and Bilinear Spherical Averages}
	
\subsection{Sparse Bounds for Lacunary Operators}
	
A direct application of the Theorem \ref{thm1} using the continuity estimates shown in the previous section yields the sparse bounds for the lacunary operators $\tlac$ and $\blac$ in Theorem \ref{thm0}.
	
\subsection{Sparse Bounds for $\tful$, $\bful$, and $\lful$}\label{secFullMax}
	
In this section we describe how the proof of the abstract sparse domination theorem in Section \ref{secAbsDom} can be extended to obtain bounds for the full maximal triangle averaging operator
\[
\tful(f,g) := \sup_{t > 0} |T_t(f,g)(x)|. 
\]
assuming the appropriate continuity estimates. Essentially the same arguments will apply to $\lful$, as long as the required $L^p$-improving and continuity estimates are available along with some bounds for the corresponding linearized operator introduced below. In particular, this argument also applies to obtain sparse bounds for $\bful$. 

We recall the single-scale maximal operator
\[
T_{\star, t}(f,g)(x) = \sup_{s\in [t, 2t]}|T_s(f,g)(x)|, 
\]
and observe that
\[
\tful(f,g)(x) = \sup_{j \in \znm}T_{\star, 2^j}(f,g)(x). 
\]
Then by an argument parallel to that in the first part of the proof of Theorem \ref{thm1} in Subsection~\ref{subsecLem1}, using the sublinearity and support properties of $T_{\star, t}$ we have the pointwise domination
\begin{align*}
	\tful(f,g)(x) &\leq \sum_{i,j=1}^{3^d}\mc{M}_{\star, i,j}(f,g)(x),
\end{align*}
where for $l_Q = q$ we define
\begin{align*}
	\mc{M}_{\star, i,j}(f,g)(x) &:= \sup_{Q \in \mc{D}^i}T_{\star, 2\os{q-4}}(1_{\frac{1}{3}Q}f,\, 1_{(\frac{1}{3}Q)(j)}g)(x)\nlspace
	&=:\sup_{Q \in \mc{D}^i} T_{\star, Q}^j(f,g)(x). 
\end{align*}
Here we use the same notation as in Subsection~\ref{subsecLem1} for cubes and lattices. The operators $T_{\star,Q}^j$ are not linear but sublinear. 

Many of the arguments given for $\tlac$ carry over to sublinear operators with no essential changes. However the duality arguments used in Subsection \ref{subsecLem2} to prove Lemma \ref{lem2} makes use of adjoint operators. For the operator $T_t$ the adjoint operators are easily derived, but that doesn't appear to be the case for $T_{\star, t}$, due in part to its nonlinearity. However, we can linearize the operator $T_{\star,1}$ as follows, for $f,g \geq 0$,
\begin{align*}
T_{\star,1}(f,g)(x) &= \sup_{\substack{\text{$t$ measurable},\\ 1\leq t(x)\leq 2}}{ \int_{  \mf{T} } f(x - t(x)y)g(x - t(x)z)\dx{\mu(y,z)} }\nlspace
	&=:\sup_{\substack{\text{$t$ measurable},\\ 1\leq t(x)\leq 2}} T_{t(x)}(f,g)(x). 
\end{align*}
Now we can take the $L^\nf$ norm of one function and dominate by a unit-scale spherical maximal function, and by interpolation we will get bounds for the operators $T_{t(x)}$,
\[
T_{t(x)} \cl L^p \tm L^q \ra L^r \quad \text{ for a triple $(p,q,r)$ with $r >1$.  }
\]
Then for a fixed $g \in L^q$ and $h \in L^{r'}$ we have
\[
|L_{g, h}(f)| := |\angs{T_{t(x)}(f,g), h}| \lesssim_{t(x)} \prs{\norm{g}_{L^q}\norm{h}_{L\os{r'}}} \norm{f}_{L^p}. 
\]
Hence by $L^p$ duality there is a function $T^{*,1}_{t(x)}(g,h) \in L\os{p'}$ such that 
\[
L_{g,h}(f) = \angss{f, T^{*,1}_{t(x)}(g,h)}, \quad \text{with} \quad \norms{ T^{*,1}_{t(x)}(g,h) }_{L\os{p'}} \lesssim \norm{g}_{L^q}\norm{h}_{L\os{r'}}. 
\]
Similarly we can obtain a second adjoint $T^{*,2}_{t(x)}(f,h)$. This same procedure will apply more generally to $L_{\star, t}$ whenever the maximal operators
\[
\sup_{s \in [t, 2t]}\abs{\int f(x - sy)\dx{\mu(y,z)}} \qquad \text{and} \qquad \sup_{s \in [t, 2t]}\abs{\int g(x - sz)\dx{\mu(y,z)}}
\]
satisfy some $L^p$ bounds. This will be the case whenever the multipliers $\wh{\mu}$ have some uniform decay, by the technique of Rubio de Francia \cite{Rubio}. This applies in particular to $\bful$, as in \cite{BarrionuevoEA}.

We can use these adjoints whenever $f,g,h$ are nice functions.  Then by Fatou's lemma (and taking an approximating sequence $t_n(x)$,
for nice functions $f,g,h$), where we write $m=\log_2(l(Q))$, we have
\begin{align*}
	&\abss{ \angss{ T_{\star, 2^{m-4}} (1_{\frac{1}{3}Q}\beta_{f, q - k},\, 1_{(\frac{1}{3}Q)(j)}\gamma_g), h_Q } }\nlspace
	&\leq \sup_{t(x)}\abss{ \angss{ T_{2^{m-4}t(x)} (1_{\frac{1}{3}Q}\beta_{f, q - k},\, 1_{(\frac{1}{3}Q)(j)}\gamma_g), h_Q } }\nlspace
	&\leq \sup_{t(x)}\sum_{P \in B_{f,q-k}}
		\abss{ \angss{ T_{2^{m-4}t(x)} (1_P1_{\frac{1}{3}Q}\beta_{f, q - k},\, 1_{(\frac{1}{3}Q)(j)}\gamma_g), h_Q } } \nlspace
	&\leq \sup_{t(x)}\frac{2^d}{|P_0|}\int_{P_0}
	\abss{ \angss{ T_{ 2^{m-4}t(x)} \prsm{(I - \tau_{-y})\sign(I_1(\cdot,\cdot-y))\prss{1_{\frac{1}{3}Q}\beta_{f, q - k}},\, 1_{(\frac{1}{3}Q)(j)}\gamma_g}, h_Q } } \dx{y} \nlspace
	&\leq \frac{2^d}{|P_0|}\int_{P_0}
	{ \angss{ {T_{\star, 2^{m-4}} \prss{(I - \tau_{-y})\sign(I_1(\cdot,\cdot-y))\prss{1_{\frac{1}{3}Q}\beta_{f, q - k}},\, 1_{(\frac{1}{3}Q)(j)}\gamma_g}}, h_Q } } \dx{y} \nlspace
%
	&\lesssim 2\os{-\eta k}|Q| \angs{\beta_{f, k}}_{Q, p}\angs{\gamma_g}_{Q, q}\angs{h_Q}_{Q, r'},
\end{align*}
for $(1/p,1/q,1/r)$ in the interior of the region where $L^p$-improving continuity estimates for $T_{\star, t}$ hold. 
We can carry out a similar argument in the ``bad-bad'' case. This shows that we can extend the arguments given for $\tlac$ to apply to $\tful$, and more generally from $\llac$ to $\lful$.
	
\section{Weighted Bounds}\label{secWeight}
	
As shown in \cite{RSS} for product-type spherical averages, the sparse bounds in Theorem \ref{thm1} imply a range of weighted bounds for the operator $\tlac$ for multilinear Muckenhoupt weights. Even though the duality technique used to prove Theorem \ref{thm1} required that we restrict to bounds with $r > 1$, we can use a combination of $L^p$ improving bounds and limited-range extrapolation to get weighted bounds in the quasi-Banach range. The result is:
\begin{corollary}\label{two}
	Let $(1/r_1, 1/r_2, 1/r_3)$ be an $L^p$-improving triple in the interior of $\mf{T}^d$, $d\geq 13$, with $r_3 > 1$. Then for any $q_1 > r_1$ and $q_2 > r_2$, with $\frac{1}{q} := \frac{1}{q_1}+\frac{1}{q_2}$ we have 
	\begin{equation*}
	\norms{\tlac(f,g)}_{L\os{q}(v)} \lesssim \norms{f}_{L\os{q_1}(v_1)}\norms{g}_{L\os{q_2}(v_2)}
	\end{equation*}
	for all $f \in L\os{q_1}(v_1)$ and $g \in L\os{q_2}(v_2)$, for all weights $(v_1, v_2) \in A_{\vec{q}, \vec{r}}$ with $v := \prod_{i=1}^2v_i\os{\frac{q}{q_i}}$. 
	
	Let $(1/r_1, 1/r_2, 1/r_3)$ be an $L^p$-improving triple in the interior of $\mf{S}^d$, $d\geq 2$, with $r_3 > 1$. Then for any $q_1 > r_1$ and $q_2 > r_2$, with $\frac{1}{q} := \frac{1}{q_1}+\frac{1}{q_2}$ we have 
	\begin{equation*}
	\norms{\blac(f,g)}_{L\os{q}(v)} \lesssim \norms{f}_{L\os{q_1}(v_1)}\norms{g}_{L\os{q_2}(v_2)}
	\end{equation*}
	for all $f \in L\os{q_1}(v_1)$ and $g \in L\os{q_2}(v_2)$, for all weights $(v_1, v_2) \in A_{\vec{q}, \vec{r}}$ with $v := \prod_{i=1}^2v_i\os{\frac{q}{q_i}}$. 
	
	Let $(1/r_1, 1/r_2, 1/r_3)$ be an $L^p$-improving triple in the interior of $\mf{S}_{\star}^d$, $d\geq 4$, with $r_3 > 1$. Then for any $q_1 > r_1$ and $q_2 > r_2$, with $\frac{1}{q} := \frac{1}{q_1}+\frac{1}{q_2}$ we have 
	\begin{equation*}
	\norms{\bful(f,g)}_{L\os{q}(v)} \lesssim \norms{f}_{L\os{q_1}(v_1)}\norms{g}_{L\os{q_2}(v_2)}
	\end{equation*}
	for all $f \in L\os{q_1}(v_1)$ and $g \in L\os{q_2}(v_2)$, for all weights $(v_1, v_2) \in A_{\vec{q}, \vec{r}}$ with $v := \prod_{i=1}^2v_i\os{\frac{q}{q_i}}$. 
\end{corollary}

As in \cite{RSS}, this corollary follows by a result of Li, Martell, and Ombrosi \cite{LMO} on extrapolation for multilinear Muckenhoupt classes. For locally integrable functions $0 < w_i < \nf$ we say that $\vec{w} = (w_1, \ldots, w_m) \in A_{\vec{p}, \vec{r}}$ provided that
\[
[\vec{w}]_{A_{\vec{p}, \vec{r}}} := \sup_Q \prs{ \dashint_Q w\os{\frac{r'_{m+1}}{r'_{m+1} - p}}\dx{x} }\os{\frac{1}{p} - \frac{1}{r'_{m+1}}}\prod_{i=1}^{m}\prs{ \dashint_Q w_i\os{\frac{r_i}{r_i - p}}\dx{x} }\os{\frac{1}{r_i} - \frac{1}{p}} < \nf,
\]
where
\[
\frac{1}{p} := \frac{1}{p_1} + \cdots + \frac{1}{p_m} \quad \text{and} \quad w := \prod_{i=1}^m w_i\os{\frac{p}{p_i}}. 
\]
Then we have the extrapolation result:
\begin{theorem}[\cite{LMO}, Corollary 2.15]\label{thm2}
	Fix $\vec{r} = (r_1, \ldots, r_{m+1})$ with $r_i \geq 1$ for $1 \leq i \leq m+1$ and $\sum_{i=1}^{m+1}\frac{1}{r_i} > 1$, and a sparsity constant $\gamma \in (0, 1)$. Let $T$ be an operator so that for every $f_1, \ldots, f_m, h \in C_c^\nf(\eun{d})$, with the supremum over sparse families,
	\[
	\angss{|T(f_1, \ldots, f_m)|, h} \lesssim \sup_{\mc{S}}\sum_{Q \in \mc{S}}|Q|\angs{h}_{Q, r_{m+1}}\prod_{i=1}^m \angs{f_i}_{Q, r_i}. 
	\]
	Then for all exponents $\vec{q} = (q_1, \ldots, q_m)$ with $\vec{r} \prec \vec{q}$, all weights $\vec{v} = (v_1, \ldots, v_m) \in A_{\vec{q}, \vec{r}}$, and all $f_1, \ldots, f_m \in C^\nf_c(\eun{d})$ we have 
	\[
	\norms{T(f_1, \ldots, f_m)}_{L^q(v)} \lesssim \prod_{i=1}^m \norms{f_i}_{L\os{q_i}(v_i)},
	\]
	where
	\[
	\frac{1}{q} := \frac{1}{q_1} + \cdots + \frac{1}{q_m} \quad \text{and} \quad v:= \prod_{i=1}^m v_i\os{\frac{q}{q_i}}. 
	\]
\end{theorem}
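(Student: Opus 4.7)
The plan is to reduce the desired weighted bound on $T$ to a weighted bound for the associated sparse operator, and then leverage the multilinear Muckenhoupt condition through a Rubio de Francia iteration, taking special care in the quasi-Banach range where duality is unavailable.

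First I would use the hypothesized sparse form bound together with a standard limiting argument (monotone convergence over finite truncations of a sparse collection) to reduce the problem to proving, uniformly over sparse families $\mc{S}$, a weighted bound of the form
\[
\Lambda_{\mc{S}, r_1, \ldots, r_{m+1}}(f_1, \ldots, f_m, h) \lesssim \norms{h}_{L\os{q'}(v\os{1-q'})} \prod_{i=1}^m \norms{f_i}_{L\os{q_i}(v_i)}
\]
in the Banach range $q > 1$ (by duality on $L^q(v)$), and a direct weighted bound on the sparse operator $\mc{A}_{\mc{S}, \vec{r}}(\vec{f})(x) := \sum_{Q \in \mc{S}} \prs{\prod_i \angs{f_i}_{Q, r_i}} 1_Q(x)$ in the quasi-Banach range $q \leq 1$. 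The constraint $\sum_i r_i\fv > 1$ together with $\vec{r} \prec \vec{q}$ guarantees that there is room in each exponent to perform the reduction.

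Second, I would extract usable per-cube information from the global $A_{\vec{q}, \vec{r}}$ condition. The key input is the reverse-H\"older self-improvement available for multilinear Muckenhoupt classes, which lets one replace each $r_i$ by a slightly smaller $\tilde{r}_i$ while staying within a comparable weight class. Using this slack, I would run a Rubio de Francia iteration to construct auxiliary weights $W_i$ (and a dual weight $W_0$) enjoying $A_p$-type conditions compatible with the exponents appearing in the sparse form, that are pointwise comparable in averaged sense to $v_i$ (respectively $v\os{1-q'}$), and for which the Hardy--Littlewood maximal operator is bounded between the appropriate weighted Lebesgue spaces.

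Third, with the auxiliary weights in hand, I would apply a generalized H\"older inequality on each sparse cube $Q \in \mc{S}$ to distribute the factors $\angs{f_i}_{Q, r_i}$ and $\angs{h}_{Q, r_{m+1}}$ against pieces of the weights, and then invoke the sparsity $|F_Q| \geq \gamma |Q|$ together with a Carleson embedding (or a principal-cube stopping-time iteration) to convert the sum over $Q$ into a single integral of a product of weighted maximal functions. Muckenhoupt's weighted maximal inequality for each $W_i$ then closes the estimate, producing the bound $\prod_i \norms{f_i}_{L\os{q_i}(v_i)}$ with the sharp dependence on $[\vec{v}]_{A_{\vec{q}, \vec{r}}}$.

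The hard part will be the quasi-Banach regime $q \leq 1$, where duality is unavailable and the reduction to a single sparse-form estimate fails. In this range I would instead work directly with $\mc{A}_{\mc{S}, \vec{r}}$, pigeonholing the sparse collection by the dyadic size of $\prod_i \angs{f_i}_{Q, r_i}$ and summing a resulting telescoping series of $L^q(v)$ estimates. Making the Rubio de Francia exponents balance against $\vec{q}$ without loss, so that the final product of norms recovers $\prod_i \norms{f_i}_{L\os{q_i}(v_i)}$, is the main technical hurdle and is exactly where the self-improvement property of $A_{\vec{q}, \vec{r}}$ is indispensable.
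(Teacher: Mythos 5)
You should first note that the paper does not prove this statement at all: it is imported verbatim from Li--Martell--Ombrosi as \cite{LMO}, Corollary 2.15, and is used as a black box to derive Corollary \ref{two}. So there is no in-paper proof to match; what you have written is an attempted proof of the external extrapolation theorem itself, and it follows a genuinely different route from \cite{LMO}. There, the sparse form hypothesis is first converted (by duality, in a Banach-range window guaranteed by $\sum_i 1/r_i>1$) into a weighted bound at one tuple of exponents, and the full range $\vec r \prec \vec q$, including the quasi-Banach targets $q\leq 1$, is then reached by their multilinear limited-range extrapolation theorem, whose engine is a Rubio de Francia iteration applied to the \emph{functions} to build majorants, not to auxiliary weights. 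Your plan instead aims at a direct, uniform weighted bound for the sparse form/operator under the $A_{\vec q,\vec r}$ condition (in the spirit of Nieraeth's direct sparse-domination approach); that route is viable and would buy a self-contained argument with explicit dependence on $[\vec v]_{A_{\vec q,\vec r}}$, whereas the extrapolation route buys the quasi-Banach range essentially for free once the single Banach-range estimate is known.

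As written, however, there are genuine gaps. The quasi-Banach regime, which you correctly identify as the hard part, is not actually argued: ``pigeonhole by the dyadic size of $\prod_i\angs{f_i}_{Q,r_i}$ and sum a telescoping series'' does not by itself produce geometric decay in $L^q(v)$; to close it you need weighted bounds (or Fefferman--Stein type control) for the multilinear maximal operator $\mathcal{M}_{\vec r}$ adapted to $A_{\vec q,\vec r}$, together with the openness/reverse-H\"older self-improvement of that class --- and these are precisely the substantive theorems of \cite{LMO} that you are invoking rather than proving, so the argument as sketched is circular in its hardest step. Relatedly, your second step conflates two different uses of Rubio de Francia: constructing auxiliary weights $W_i$ for maximal-function bounds is not how the iteration enters the extrapolation proof, and in the direct route it is not the iteration but the per-cube H\"older/Carleson argument plus the $\mathcal{M}_{\vec r}$ characterization that carries the load. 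Finally, the Banach-range dualization against $\norms{h}_{L^{q'}(v^{1-q'})}$ is fine, but you must track how the $h$-slot exponent $r_{m+1}$ interacts with the condition $\vec r\prec\vec q$ (i.e.\ $q<r_{m+1}'$) and with the exponent $\frac{1}{p}-\frac{1}{r_{m+1}'}$ appearing in the definition of $[\vec w]_{A_{\vec p,\vec r}}$; this bookkeeping is where the hypothesis $\sum_{i=1}^{m+1}1/r_i>1$ is consumed, and it is currently absent from your outline.
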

\noindent To use this result to obtain Corollary \ref{two}, we choose an $L^p$-improving triple $(1/r_1, 1/r_2, 1/r'_3)$ in the interior of $\mf{B}$ with $r_3' > 1$. Then 
\[
\frac{1}{r_1} + \frac{1}{r_2} + \frac{1}{r_3} > 1,
\]
so for any $q_1 > r_1$ and $q_2 > r_2$ and $\frac{1}{q} := \frac{1}{q_1}+\frac{1}{q_2}$, application of Theorem \ref{thm1} and Theorem \ref{thm2} yields
\begin{equation}\label{eq2}
\norms{\tlac(f,g)}_{L\os{q}(v)} \lesssim \norms{f}_{L\os{q_1}(v_1)}\norms{g}_{L\os{q_2}(v_2)}
\end{equation}
for all $f \in L\os{q_1}(v_1)$ and $g \in L\os{q_2}(v_2)$ and all weights $v, v_1, v_2$ as in Theorem \ref{thm2}. The same argument applies to bounds for $\blac$ and $\bful$.

\vskip.025in

\end{document}